\newif\ifSa
\newif\ifSb
\newif\ifSc
\newif\ifSd
\newif\ifSe
\newif\ifSf
\newif\ifSg
\newif\ifSh
\newif\ifTa
\newif\ifTb
\newif\ifTc
\newif\ifTd
\newif\ifTe
\newif\ifTf
\newif\ifTg
\newif\ifTh
\newcommand\clearlabels{\Safalse \Sbfalse \Scfalse \Sdfalse \Sefalse \Sffalse \Sgfalse \Shfalse%
                        \Tafalse \Tbfalse \Tcfalse \Tdfalse \Tefalse \Tffalse \Tgfalse \Thfalse}
\newcommand\parboxsize{1.2in}
\newcommand\nodesize{.1}
\newcommand\circsize{.4}
\newcommand\nast{\Large \textbf \textasteriskcentered } 
\newcommand\past[1]{  \rput(#1){\nast} } 
\newcommand\pcirc[1]{   \pscircle (#1) {\circsize}  } 
\newcommand\pnode[1]{   \pscircle[fillcolor=black,fillstyle=solid] (#1) {\nodesize}  }  
\newcommand\Esix{%
\parbox{\parboxsize}
{
\psset{unit=.45}
\begin{pspicture}(-.75,-1)(4.5,1.2)  
\psline(0,0)(4,0)  \psline(2,0)(2,1)
\pnode{0,0}  \ifSa  \pcirc{0,0} \fi  \ifTa \past{0,0}\fi
\pnode{1,0}  \ifSb \pcirc{1,0}  \fi  \ifTb \past{1,0}\fi
\pnode{2,0}  \ifSc \pcirc{2,0}  \fi  \ifTc \past{2,0}\fi
\pnode{3,0}  \ifSd \pcirc{3,0}  \fi  \ifTd \past{3,0}\fi
\pnode{4,0}  \ifSe \pcirc{4,0}  \fi  \ifTe \past{4,0}\fi
\pnode{2,1}  \ifSf \pcirc{2,1}  \fi  \ifTf \past{2,1}\fi
\end{pspicture}
}
}%
\newcommand\Esixtable{%
\parbox{\parboxsize}
{
\psset{unit=.45}
\begin{pspicture}(-.5,-1)(4.5,1.7)  
\psline(0,0)(4,0)  \psline(2,0)(2,1)
\pnode{0,0}  \ifSa  \pcirc{0,0} \fi  \ifTa \past{0,0}\fi
\pnode{1,0}  \ifSb \pcirc{1,0}  \fi  \ifTb \past{1,0}\fi
\pnode{2,0}  \ifSc \pcirc{2,0}  \fi  \ifTc \past{2,0}\fi
\pnode{3,0}  \ifSd \pcirc{3,0}  \fi  \ifTd \past{3,0}\fi
\pnode{4,0}  \ifSe \pcirc{4,0}  \fi  \ifTe \past{4,0}\fi
\pnode{2,1}  \ifSf \pcirc{2,1}  \fi  \ifTf \past{2,1}\fi
\end{pspicture}
}
}%
\newcommand\Esixlabelled{%
\parbox{\parboxsize}
{
\psset{unit=.48}
\begin{pspicture}(-.5,-1)(4.5,1)
\psline(0,0)(4,0)  \psline(2,0)(2,1)
\pnode{0,0}
\pnode{1,0}
\pnode{2,0}
\pnode{3,0}
\pnode{4,0}
\pnode{2,1}
	\put(-.2,-.6){$\mu_1$}
	\put(.8,-.6){$\mu_2$}
	\put(1.8,-.6){$\mu_3$}
	\put(2.8,-.6){$\mu_4$}
	\put(3.8,-.6){$\mu_5$}
	\put(2.2,.8){$\mu_6$}
\end{pspicture}
}
}%
\newcommand\Esixextended{%
\parbox{\parboxsize}
{
\psset{unit=.48}
\begin{pspicture}(-.5,-1)(4.5,2.1)
\psline(0,0)(4,0) \psline(2,0)(2,2)
\pnode{0,0}
\pnode{1,0}
\pnode{2,0}
\pnode{3,0}
\pnode{4,0}
\pnode{2,1}
\pnode{2,2}
	\put(-.2,-.6){$\mu_1$}
	\put(.8,-.6){$\mu_2$}
	\put(1.8,-.6){$\mu_3$}
	\put(2.8,-.6){$\mu_4$}
	\put(3.8,-.6){$\mu_5$}
	\put(2.2,.8){$\mu_6$}
    \put(2.2,1.8){$\mu^-$}
\end{pspicture}
}
}%
\newcommand\Eseven{%
\parbox{\parboxsize}
{
\psset{unit=.45}
\begin{pspicture}(-.5,-1)(5.5,1.5)  
\psline(0,0)(5,0) \psline(2,0)(2,1)
\pnode{0,0}  \ifSa  \pcirc{0,0} \fi  \ifTa \past{0,0}\fi
\pnode{1,0}  \ifSb \pcirc{1,0}  \fi  \ifTb \past{1,0}\fi
\pnode{2,0}  \ifSc \pcirc{2,0}  \fi  \ifTc \past{2,0}\fi
\pnode{3,0}  \ifSd \pcirc{3,0}  \fi  \ifTd \past{3,0}\fi
\pnode{4,0}  \ifSe \pcirc{4,0}  \fi  \ifTe \past{4,0}\fi
\pnode{5,0}  \ifSf \pcirc{5,0}  \fi  \ifTf \past{5,0}\fi
\pnode{2,1}  \ifSg \pcirc{2,1}  \fi  \ifTg \past{2,1}\fi
\end{pspicture}
}
}%
\newcommand\An{%
\parbox{\parboxsize}
{
\psset{unit=.45}
\begin{pspicture}(-.5,-1)(4,1)
\psline(0,0)(1,0) \psline(2.5,0)(3.5,0) \put(1.40,-.05){$\dots$}  
\pnode{0,0}  \ifSa  \pcirc{0,0} \fi  \ifTa \past{0,0}\fi
\pnode{1,0}  \ifSb \pcirc{1,0}  \fi  \ifTb \past{1,0}\fi
\pnode{2.5,0}  \ifSc \pcirc{2.5,0}  \fi  \ifTc \past{2,0}\fi
\pnode{3.5,0}  \ifSd \pcirc{3.5,0}  \fi  \ifTd \past{3,0}\fi
\end{pspicture}
}
}%
\newcommand\Anmidin{%
\parbox{\parboxsize}
{
\psset{unit=.45}
\begin{pspicture}(.1,-1)(5,1)
\psline(1.5,0)(3.5,0) \put(.15,-.05){$\dots$} \put(4.0,-.05){$\dots$}
\pnode{0,0}  \ifSa  \pcirc{0,0} \fi  \ifTa \past{0,0}\fi
\pnode{1.5,0}  \ifSb \pcirc{1.5,0}  \fi  \ifTb \past{1.5,0}\fi
\pnode{2.5,0}  \ifSc \pcirc{2.5,0}  \fi  \ifTc \past{2.5,0}\fi
\pnode{3.5,0}  \ifSd \pcirc{3.5,0}  \fi  \ifTd \past{3.5,0}\fi
\pnode{5,0}  \ifSe \pcirc{5,0}  \fi  \ifTe \past{5,0}\fi
\end{pspicture}
}
}%
\newcommand\Anmidout{%
\parbox{\parboxsize}
{
\psset{unit=.45}
\begin{pspicture}(.1,-1)(5,1)
\psline(1.5,0)(3.5,0) \put(0.45,-.05){$\dots$} \put(3.70,-.05){$\dots$}
\pnode{0,0}  \ifSa  \pcirc{0,0} \fi  \ifTa \past{0,0}\fi
\pnode{1.5,0}  \ifSb \pcirc{1.5,0}  \fi  \ifTb \past{1.5,0}\fi
\pnode{2.5,0}  \ifSc \pcirc{2.5,0}  \fi  \ifTc \past{2.5,0}\fi
\pnode{3.5,0}  \ifSd \pcirc{3.5,0}  \fi  \ifTd \past{3.5,0}\fi
\pnode{5,0}  \ifSe \pcirc{5,0}  \fi  \ifTe \past{5,0}\fi
\end{pspicture}
}
}%
\newcommand\Bn{%
\parbox{\parboxsize}
{
\psset{unit=.45}
\begin{pspicture}(-.5,-1)(5,1)
\psline(0,0)(1,0) \psline(2.5,0)(3.5,0) \psline[doubleline=true]{->}(3.5,0)(4.7,0) \put(1.45,-.05){$\dots$} 
\pnode{0,0}  \ifSa  \pcirc{0,0} \fi  \ifTa \past{0,0}\fi
\pnode{1,0}  \ifSb \pcirc{1,0}  \fi  \ifTb \past{1,0}\fi
\pnode{2.5,0}  \ifSc \pcirc{2.5,0}  \fi  \ifTc \past{2.5,0}\fi
\pnode{3.5,0}  \ifSd \pcirc{3.5,0}  \fi  \ifTd \past{3.5,0}\fi
\pnode{4.7,0}  \ifSe \pcirc{4.7,0}  \fi  \ifTe \past{4.7,0}\fi
\end{pspicture}
}
}%
\newcommand\Cn{%
\parbox{\parboxsize}
{
\psset{unit=.45}
\begin{pspicture}(-.5,-1)(5,1)
\psline(0,0)(1,0) \psline(2.5,0)(3.5,0) \psline[doubleline=true]{<-}(3.5,0)(4.7,0)
\put(1.40,-.05){$\dots$}
\pnode{0,0}  \ifSa  \pcirc{0,0} \fi  \ifTa \past{0,0}\fi
\pnode{1,0}  \ifSb \pcirc{1,0}  \fi  \ifTb \past{1,0}\fi
\pnode{2.5,0}  \ifSc \pcirc{2.5,0}  \fi  \ifTc \past{2.5,0}\fi
\pnode{3.5,0}  \ifSd \pcirc{3.5,0}  \fi  \ifTd \past{3.5,0}\fi
\pnode{4.7,0}  \ifSe \pcirc{5,0}  \fi  \ifTe \past{4.7,0}\fi
\end{pspicture}
}
}%
\newcommand\Dfive{%
\parbox{.7in}
{
\psset{unit=.45}
\begin{pspicture}(-.5,-1)(3.2,1)
\psline(0,0)(2,0) \psline(2.0,0)(2.7,.7) \psline(2.0,0)(2.7,-.7)
\pnode{0,0}  \ifSa  \pcirc{0,0} \fi  \ifTa \past{0,0}\fi
\pnode{1,0}  \ifSb \pcirc{1,0}  \fi  \ifTb \past{1,0}\fi
\pnode{2,0}  \ifSc \pcirc{2,0}  \fi  \ifTc \past{2,0}\fi
\pnode{2.7,.7}  \ifSd \pcirc{2.7,.7}  \fi  \ifTd \past{2.7,.7}\fi
\pnode{2.7,-.7}  \ifSe \pcirc{2.7,-.7}  \fi  \ifTe \past{2.7,-.7}\fi
\end{pspicture}
}
}%
\newcommand\Dn{%
\parbox{\parboxsize}
{
\psset{unit=.45}
\begin{pspicture}(-.5,-1)(4.5,1.2)
\psline(0,0)(1,0) \psline(2.5,0)(3.5,0) \psline(3.5,0)(4.2,.7) \psline(3.5,0)(4.2,-.7) \put(1.45,-.05){$\dots$} 
\pnode{0,0}  \ifSa  \pcirc{0,0} \fi  \ifTa \past{0,0}\fi
\pnode{1,0}  \ifSb \pcirc{1,0}  \fi  \ifTb \past{1,0}\fi
\pnode{2.5,0}  \ifSc \pcirc{2.5,0}  \fi  \ifTc \past{2.5,0}\fi
\pnode{4.2,.7}  \ifSd \pcirc{4.2,.7}  \fi  \ifTd \past{4.2,.7}\fi
\pnode{4.2,-.7}  \ifSe \pcirc{4.2,-.7}  \fi  \ifTe \past{4.2,-.7}\fi
\end{pspicture}
}
}%
\newtheorem{theorem}{Theorem}[subsection]  
\newtheorem{lemma}[theorem]{Lemma}
\newtheorem{proposition}[theorem]{Proposition}
\newtheorem{corollary}[theorem]{Corollary}
\theoremstyle{definition}
\newtheorem{definition}[theorem]{Definition}
\newtheorem{construction}[theorem]{Construction}
\newtheorem{example}[theorem]{Example}
\newtheorem{remark}[theorem]{Remark}
\newlength\savedwidth
        \newcommand\whline{\noalign{\global\savedwidth\arrayrulewidth\global\arrayrulewidth 1.2pt}%
        \hline
        \noalign{\global\arrayrulewidth\savedwidth}}
\DeclareMathOperator\ad{ad}
\DeclareMathOperator\Aut{Aut}
\DeclareMathOperator\card{card}
\DeclareMathOperator{\comp}{comp}
\DeclareMathOperator\End{End}
\DeclareMathOperator\GL{GL}
\DeclareMathOperator\height{ht}
\DeclareMathOperator\Hom{Hom}
\DeclareMathOperator\id{id}
\DeclareMathOperator\KA{KA}
\DeclareMathOperator\Mat{M}
\DeclareMathOperator\spl{\mathfrak{sl}}
\DeclareMathOperator\SPA{SPA}
\DeclareMathOperator\spann{span}
\DeclareMathOperator\supp{supp}
\DeclareMathOperator\tr{tr}
\DeclareMathOperator\trcomp{t}
\newcommand\al{\alpha}
\newcommand\De{\Delta}
\newcommand\lm{\lambda}
\newcommand\ph{\varphi}
\newcommand\sg{\sigma}
\newcommand\bbK{\mathbb{K}}
\newcommand\bbQ{\mathbb{Q}}
\newcommand\bbR{\mathbb{R}}
\newcommand\bbZ{\mathbb{Z}}
\newcommand\cC{\mathcal{C}}
\newcommand\cF{\mathcal{F}}
\newcommand\cG{\mathcal{G}}
\newcommand\cH{\mathcal{H}}
\newcommand\cS{\mathcal{S}}
\newcommand\bP{{\breve P}}
\newcommand\brv{\breve{\hphantom{a}}} 
\newcommand\bS{{\breve S}}
\newcommand\ff{{\mathfrak f}}
\newcommand\fk{{\mathfrak k}}
\newcommand\Kan{\mathfrak{K}}
\newcommand\fP{\mathfrak{P}}
\newcommand\fT{\mathfrak{T}}
\newcommand\tPi{\tilde{\Pi}}
\newcommand\rL{L}  
\newcommand \andd{\quad\text{and}\quad}
\newcommand\bform{{   \langle \, ,\, \rangle }}
\newcommand\simgr{\simeq_{\text{gr}}}
\newcommand \suchthat { : }
\newcommand \set[1]{\{#1 \}}
\newcommand\sm{\setminus}
\newcommand\tprod{{   \{\, ,\, ,\, \} }}
\newcommand\type[2]{\rm{#1}_{#2}}  
\newcommand\BCone{\type{BC}{1}}
\newcommand\BCtwo{\type{BC}{2}}
\newcommand\chiST{\chi_{(S,T)}}
\newcommand\ES{E_\Sigma}
\newcommand\formtwo{}
\newcommand\formblank{(\ ,\ )}
\newcommand\griso{isomorphic}
\newcommand\hQo{\Hom(Q_\Sigma,\bbZ)}
\newcommand\hQt{\Hom(Q_\Sigma,\bbZ^2)}
\newcommand\msg{{-\sigma}}
\newcommand\op{{\textrm{op}}}
\newcommand\p[1]{{}^{#1}}
\newcommand\Pd{\Pi_\Delta}
\newcommand\Ps{\Pi_\Sigma}
\newcommand\psh{{\rm{psh}}}
\newcommand\pos{{\rm{p}}}
\newcommand\QD{{Q_\Delta}}  
\newcommand\QS{{Q_\Sigma}}  
\newcommand\rbrv{ {\}\brv}^{\sg}  }
\newcommand\rD{{n_\Delta}}
\newcommand\rS{{n_\Sigma}}
\newcommand\sh{{\rm{sh}}}
\newcommand\rkn{n}
\newcommand\WS{W_\Sigma}
\newcommand\WD{W_\Delta}
\newcommand\wPS{\tilde{w}_{\Pi\sm S}}
\newcommand\wPT{\tilde{w}_{\Pi\sm T}}
\newcommand\wX{\tilde{w}_X}
\newcommand\Xn{\type{X}{n}}
\begin{document}
\title[Dynkin diagrams and Kantor pairs]{Dynkin diagrams and short Peirce gradings of Kantor pairs}
\author{Bruce Allison}
\address[Bruce Allison]{Department of Mathematical and Statistical Sciences \\ University
of Alberta}
\email{ballison@ualberta.ca}
\author{John Faulkner}
\address[John Faulkner]
{Department of Mathematics\\
University of Virginia\\
Kerchof Hall, P.O.~Box 400137\\
Charlottesville VA 22904-4137 USA}
\email{jrf@virginia.edu}

\date{February 10, 2017}

\begin{abstract}In a recent article with Oleg Smirnov, we defined short Peirce (SP) graded Kantor pairs.
For any such pair $P$, we defined a family, parameterized by the Weyl group of type $\text{BC}_2$, consisting
of SP-graded Kantor pairs called Weyl images of $P$.   In this article, we classify finite dimensional
simple SP-graded Kantor pairs over an algebraically closed field of characteristic $0$ in terms of
marked Dynkin diagrams, and we show how to compute Weyl images using these diagrams.  The theory is
particularly attractive for close-to-Jordan Kantor pairs (which are variations of Freudenthal triple systems),
and we construct the reflections of such pairs (with  nontrivial gradings) starting from Jordan matrix pairs.
\end{abstract}

\subjclass[2010]{Primary 17B60, 17B70; Secondary  17C99}
\keywords{Kantor pairs, graded Lie algebras, Jordan pairs, Freudenthal triple systems, homomorphisms of root systems}
\maketitle

Suppose  in this introduction that ${\bbK}$ is a field of characteristic $\ne 2$ or $3$.
A \emph{Kantor pair} is a pair   $P = (P^-,P^+)$ of ${\bbK}$-modules together with trilinear products $\tprod^\sg : P^\sg \times P^\msg \times P^\sg \to P^\sg$, $\sg = \pm$, satisfying two $5$-linear identities (see Section 4.2) which were first written down by Isai Kantor in
\cite{K1} in the special case of \emph{Kantor triple systems} (when $P^- = P^+$ and $\tprod^- = \tprod^+$).
 Examples of Kantor pairs, or structures that give rise to Kantor pairs, have arisen in the work of many different authors (see Section 4.3  and \cite[\S 3.1]{AFS} for some references).

The motivation for the study of Kantor pairs is their relationship with
$5$-graded Lie algebras.  We now recall that relationship in the special case of primary interest to us when the structures involved are simple, in which case the relationship is a 1-1 correspondence.  (Our convention, as described in Subsection  \ref{subsec:termgrade}, is that the term \emph{simple} for a graded structure is interpreted in the ungraded sense.)
If $L = L_{-2} \oplus  L_{-1} \oplus L_{0} \oplus L_{1} \oplus L_{2}$ is a simple $5$-graded  Lie algebra, then $(L_{-1},L_{1})$ is a simple Kantor pair  with products given by $\{x,y,z\}^\sg = [[x,y],z]$ for $\sg = \pm$, in which case we say that the pair $(L_{-1},L_{1})$ is \emph{enveloped} by $L$.
Conversely, any simple Kantor pair $P$ is enveloped by a simple $5$-graded Lie algebra $L$, which is unique
up to graded isomorphism \cite{AFS} and which has the property that $L_{\sg 2} \simeq K^\sg(P^\sg,P^\sg)$
(as vector spaces) for $\sg = \pm$, where
$K^\sg(x,z) \in \Hom(P^\msg,P^\sg)$ is defined  by
$K^\sg(x,z) w = \{x,w,z\}^\sg - \{z,w,x\}^\sg$ for $x,z\in P^\sg$.

A Kantor pair $P$ satisfying $K^\sg(P^\sg,P^\sg) = 0$ for $\sg = \pm$ is called a (linear) \emph{Jordan pair} \cite{L}, and
the relationship between Kantor pairs and $5$-graded Lie algebras, generalizes the well-known relationship between Jordan pairs and  $3$-graded Lie algebras.
With this important special case in mind, we define a \emph{close-to-Jordan pair} to be a Kantor pair $P$
with $\dim(K^\sg(P^\sg,P^\sg)) = 1$ for $\sg = \pm$.

If $\Delta$ is a finite root system (possibly not reduced), we define a \emph{$\Delta$-grading} of a Lie algebra $L$, to be a grading of $L$ by the root lattice of $\Delta$, with support contained in $\Delta\cup \set{0}$ (see Section 3). With this terminology we see that $5$-graded Lie algebras can be thought of as
$\BCone$-graded Lie algebras, by which we mean a Lie algebra graded
by the root system $\set{-2\al_1,-\al_1, \al_1,2\al_1}$ of type $\BCone$.  Hence the correspondence described above
can be viewed as a 1-1 correspondence between simple Kantor pairs (up to isomorphism) and simple $\BCone$-graded Lie algebras (up to graded isomorphism).

We next recall how $\BCtwo$-graded Lie algebras also play a role in the study of Kantor pairs.
A \emph{short Peirce grading} (\emph{SP-grading}) of a Kantor pair $P$ is a $\bbZ$-grading
$P = P_0 \oplus P_1$ of $P$ with $P_i = 0$ for $i \ne   0,1$.
These gradings were introduced in \cite{AFS}, where it was shown that
there is a 1-1 correspondence between simple SP-graded  Kantor pairs and simple  $\BCtwo$-graded Lie algebras that
is analogous to the one just described.
Given a simple SP-graded Kantor pair $P$, this correspondence
was used to construct  a simple SP-graded Kantor pair $\p{u}P$, called the
\emph{$u$-image} of $P$, for each
$u$ in the Weyl group of the root system of type $\BCtwo$.
In this way one obtains eight \emph{Weyl images} of a simple SP-graded Kantor pair $P$, no two of which are (in general) graded-isomorphic.
Of particular interest is the $s_1$-image of $P$, which we simply call the \emph{reflection of $P$},
where $s_1$ is the reflection corresponding to the short basic root.
As an application of Weyl images, it was shown in \cite{AFS} that reflection applied to Jordan pairs of
skew-transformations yields a new class of (in general) infinite dimensional simple SP-graded Kantor pairs that are not themselves Jordan.

In this article, we use subsets of Dynkin diagrams to classify simple SP-graded Kantor pairs and  to compute their Weyl images in the special case  of finite dimensional pairs over  an algebraically closed field $\bbK$
of characteristic $0$.  For the rest of this introduction, we make these additional assumptions and briefly outline our main results.  In these results, a simple Kantor pair is said to be of \emph{type
$\Xn$}, if the root system of its enveloping simple 5-graded Lie algebra is of type $\Xn$.

With the exception of Section \ref{sec:KPfd}, which we discuss below, Sections \ref{sec:HomSD} to \ref{sec:SP}
are devoted to laying the  necessary groundwork on homomorphisms of root systems, root graded Lie algebras and Kantor pairs.  Of particular interest is Theorem \ref{thm:rootgrG}, which
uses homomorphisms of root systems to classify all $\Delta$-gradings (with $\Delta$ arbitrary)  of a finite dimensional semi-simple Lie algebra.

In Section \ref{sec:SPfd}, we assume that
$\Pi$ is the Dynkin diagram of type $\Xn$, where $\Xn$ is the type of an irreducible reduced finite root system.
We first define a set $\SPA(\Pi)$  of pairs of subsets of $\Pi$, whose elements  are said to be \emph{SP-admissible}.   In Proposition
\ref{prop:path}. we show how to easily write down the elements of $\SPA(\Pi)$  using the extended diagram of $\Pi$.
Then, in Theorem \ref{thm:SP}, we give a classification of simple SP-graded Kantor pairs of type $\Xn$  by showing that they  are (up to graded isomorphism) in 1-1 correspondence with the orbits in
$\SPA(\Pi)$ under the right action of $\Aut(\Pi)$.  The proof uses
 Theorem \ref{thm:rootgrG} when  $\Delta$ is irreducible of type $\BCtwo$.

If we specialize Theorem \ref{thm:SP}  to the case when the SP-grading is the zero grading (i.e.~$P = P^0$),
we obtain a classification of simple (ungraded) Kantor pairs of type $\Xn$, which states that they are (up to isomorphism)  in 1-1 correspondence with the
orbits in the set $\KA(\Pi)$ of \emph{Kantor admissible} subsets of $\Pi$ under the right action
of $\Aut(\Pi)$.
For the sake of readability, we actually state this classification theorem earlier as Theorem
\ref{thm:Kantor}  in Section \ref{sec:KPfd}.
We note that Theorem \ref{thm:Kantor} is equivalent to Kantor's classification
of non-polarized simple Kantor triple systems \cite{K1} (see Remark \ref{rem:Kantor}).
Nevertheless we include a complete treatment of the topic
here, since we are working in the context of pairs rather than triple systems, and since the article \cite{K1}  is unavailable to some readers and does not include all details.

 Both Theorems \ref{thm:Kantor} and \ref{thm:SP} take particularly simple and attractive forms in the case of close-to-Jordan pairs, and for this reason we highlight this example throughout the paper.  For example, we see in Theorems  \ref{thm:KanSko} and  \ref{thm:symplectic} that if $n\ge 2$ there is exactly one simple close-to-Jordan pair of type $\Xn$ and that this Kantor pair can be viewed as the signed double of a Freudenthal triple system \cite{M1} with a modified product.

If Section \ref{sec:Weylcompute}, we introduce a left  action, denoted by $*$, of the Weyl group $\WD$ of the root system $\Delta$ of type
$\BCtwo$ on the set $\SPA(\Pi)$.
In Theorem \ref{thm:uPST}, we show that, if $u\in \WD$, the $u$-image of the SP-graded Kantor pair corresponding to $(S,T) \in \SPA(\Pi)$ is the SP-graded Kantor pair corresponding to $u*(S,T)\in \SPA(\Pi)$.
Then in Theorem \ref{thm:uST}, we show how to compute $u*(S,T)$ using only information about the Dynkin diagram $\Pi$. The proof of this result is rather intricate, involving the longest element of the Weyl group of certain subsets of $\Pi$.
Combining Theorems \ref{thm:uPST} and \ref{thm:uST} with our classification Theorem \ref{thm:SP},  one can easily compute the Weyl images of any simple SP-graded Kantor pair using only information about $\Pi$.

As an application of our results, we obtain in Section \ref{sec:close}  a construction of the reflections of all simple nontrivially SP-graded close-to-Jordan pairs in the form $J \otimes U$,
where $J$ is a Jordan pair of matrices and $U$ is a pair of two-dimensional  spaces.
This generalizes Kantor's construction of his remarkable  Kantor triple system $C_{55}^2$, since the double
of that triple system is a reflection  of the  close-to-Jordan pair of type  $\type{E}{6}$
\cite[\S 7]{AFS}.

\emph{Acknowledgments:}  Isai Kantor's methods and ideas are used in many places in this article,
and we are pleased to acknowledge his  significant influence.  Also, we have greatly benefited from many conversations with Oleg Smirnov about this work, and about Kantor pairs and graded Lie algebras in general.

\section{Some sets of homomorphisms between root systems}
\label{sec:HomSD}

In this section we investigate some sets of homomorphisms between roots systems.

\subsection{Root systems}
\label{subsec:rootsystem}
In this paper, a \emph{root system} will mean a  finite root system $\De$ in a finite
dimensional real Euclidean space $E_\Delta$ as described for example
in \cite[VI, \S1.1]{Bour}.   $\Delta$ is said to be \emph{reduced}   if
$(2\Delta) \cap \Delta = \emptyset$.
The \emph{rank} $\rD$ of $\Delta$ is the dimension of $E_\Delta$.
If $\De$ is irreducible and $n = n_\Delta$, the \emph{type} of $\De$ is one of  $\type{A}{n} (n\ge 1))$, $\type{B}{n} (n\ge 2)$, $\type{C}{n} (n\ge 3))$, $\type{D}{n} (n\ge 4))$, $\type{E}{n} (n = 6,7,8) $, $\type{F}{4}$, $\type{G}{2}$, or $\type{BC}{n} (n \ge 1)$,
where the last case occurs if and only if $\Delta$ is not reduced
\cite[VI, \S 1.4, Prop.~14]{Bour}.

We  use the notation $\QD := \spann_\bbZ(\Delta)$
for the \emph{root lattice} of $\Delta$, which is a free abelian group  of rank $n_\Delta$.
The \emph{automorphism group of $\De$}, denoted by
$\Aut(\De)$, is the stabilizer of $\De$ in $\GL(E_\De)$.  Using the restriction map,
we often identify  $\Aut(\De)$ with the stabilizer of $\Delta$ in $\Aut(Q_\De)$.
The \emph{Weyl group}  of $\De$ (contained in $\Aut(\De)$), will be denoted by
$W_\De$. We denote the Euclidean product on $E_\Delta$
by $(\ ,\ )$,  and write  $\langle \al,\beta \rangle = 2(\al,\beta)/(\beta,\beta)$
for $\al,\beta\in\Delta$.
If $\Delta$ is irreducible, we denote the set of roots of minimum length (\emph{short roots}) in $\De$ by $\De_\sh$.

If a base $\Pd$ for the root system $\Delta$ is fixed, we let $\Delta^+$ be the
set of \emph{positive roots relative to $\Pd$}.
If   $\al = \sum_{\gamma\in \Pd}  n_\gamma \gamma\in \QD$, where $n_\gamma\in \bbZ$, we
define the \emph{height} of $\al$ to be  $\height_{\Pd}(\al) :=\sum_{\gamma\in \Pd}  n_\gamma$
and the  \emph{support} of $\al$ in $\Pd$ to be
$\supp_{\Pd}(\al) := \set{\gamma\in \Pd \suchthat n_\gamma \ne 0}$.
We denote the stabilizer of $\Pd$ in $\Aut(\Delta)$
by $\Aut(\Pd)$. If  $\Delta$ is reduced, the set $\Pd$ has the additional structure
of a \emph{Dynkin diagram}, in which case the restriction map
identifies  $\Aut(\Pd)$
with the  automorphism group of the Dynkin diagram $\Pd$ \cite[VI, \S 4.2]{Bour}.
If  $\Pd = \set{\al_i \suchthat i \in I}$ is indexed by a finite set $I$, the $I\times I$-matrix
$C(\Pd) = (\langle\al_i, \al_j\rangle)_{i,j\in I}$ is called the \emph{Cartan matrix}
of  $\Pd$.

If $\cG$ is a finite dimensional simple Lie algebra over an algebraically closed field of characteristic $0$ with Cartan subalgebra $\cH$, the classical theory for such Lie algebras tells us that the  root system $\Sigma(\cG,\cH)$ of $\cG$ relative to $\cH$
is an irreducible reduced root system  in the Euclidean space
$E_{\Sigma(\cG,\cH)} = \mathbb{R}\otimes_\mathbb{Q} \spann_\mathbb{Q}\Sigma(\cG,\cH)$ \cite[\S 8.5]{H}.  In that case, the \emph{type of $\cG$} is defined to be the type of  $\Sigma(\cG,\cH)$.

\subsection{The sets $\Hom(\Sigma,\Delta)$ and $\Hom_\pos(\Sigma,\Delta)$ \protect} \label{subsec:HomSD}

For the rest of the section, \emph{we assume that $\De$ is a root system of rank $\rD$ with base $\Pd$,
and $\Sigma$ is a root system of rank $\rS$ with base $\Ps$}.
Elements of $\QD$ will be normally denoted by $\al, \beta, \dots$,
while elements of $\QS$ will normally be denoted by $\mu, \nu, \dots$.

Let
\[\Hom(\Sigma,\Delta) :=
\set{\rho\in \Hom(\QS,\QD) \suchthat \rho(\Sigma) \subseteq \Delta\cup \set{0}}.\]
We call elements of $\Hom(\Sigma,\Delta)$ \emph{homomorphisms of $\Sigma$ into $\Delta$}.
We say that $\rho\in \Hom(\Sigma,\Delta)$ is \emph{positive (relative to
$\Ps$ and $\Pd$)}
if
$\rho(\Sigma^+) \subseteq \Delta^+ \cup \set{0}$ (or equivalently
$\rho(\Ps) \subseteq \Delta^+ \cup \set{0}$).
Let
\[\Hom_\pos(\Sigma,\Delta) := \set{\rho\in \Hom(\Sigma,\Delta) \suchthat \rho \text{ is positive}}.\]

\subsection{Some actions by composition}
\label{subsec:comphom}
Recall that if $G$ and $G'$ are abelian groups (written additively) then there is a \emph{right action of $\Aut(G)$ by composition} on the group  $\Hom(G,G')$, as well as a \emph{left action by composition of $\Aut(G')$ on $\Hom(G,G')$}.  These are defined respectively by
\[\rho \cdot \ph :=\rho\circ \ph \andd \theta \cdot \rho  :=\theta\circ \rho \]
for $\ph\in \Aut(G)$, $\rho\in \Hom(G,G')$ and $\theta\in \Aut(G')$.
Since these two actions commute, we can write
expressions like $\theta\cdot\rho\cdot\ph$ for $\ph\in \Aut(G)$, $\rho\in \Hom(G,G')$  and $\theta\in \Aut(G')$.

In particular, $\Aut(Q_\Sigma)$ acts on the right by composition and $\Aut(Q_\Delta)$ acts on the left by composition
 on $\Hom(\QS,\QD)$.  Moreover,
clearly \[\Aut(\Delta) \cdot  \Hom(\Sigma,\Delta) \cdot \Aut(\Sigma) \subseteq
\Hom(\Sigma,\Delta).\]
Hence \emph{$\Aut(\Sigma)$ (resp.~$\Aut(\Delta)$) acts on the right (resp.~left)
on $\Hom(\Sigma,\Delta)$ by composition}.

\begin{lemma}  \label{lem:WSorbit}  Each orbit in $\Hom(\Sigma,\Delta)$ under the right action
of $\WS$ by composition contains a unique element of the set $\Hom_\pos(\Sigma,\Delta)$.
\end{lemma}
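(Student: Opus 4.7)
The plan is to translate the statement into a claim about positive systems of $\Sigma$ and then exploit the simple transitivity of $\WS$ on them.  Given $\rho\in\Hom(\Sigma,\Delta)$, I would first partition $\Sigma = \Sigma_+ \sqcup \Sigma_- \sqcup \Sigma_0$ by setting $\Sigma_\pm := \rho^{-1}(\Delta^\pm)\cap\Sigma$ and $\Sigma_0 := \rho^{-1}(0)\cap\Sigma$ (so $\Sigma_- = -\Sigma_+$).  Since $\rho\circ w\in\Hom_\pos(\Sigma,\Delta)$ if and only if $w(\Sigma^+)\cap\Sigma_- = \emptyset$, i.e.\ $w(\Sigma^+)$ is a positive system of $\Sigma$ containing $\Sigma_+$, the lemma reduces to two claims: some positive system in the $\WS$-orbit of $\Sigma^+$ contains $\Sigma_+$, and all $w$ producing one give the same composition $\rho\circ w$.

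For existence, I would fix $x$ in the interior of the fundamental chamber of $\Delta$ and consider the linear functional $f(v) := (x,\rho_\bbR(v))$ on $\ES$, where $\rho_\bbR:\ES\to\ED$ is the $\bbR$-linear extension of $\rho$.  The sign of $f$ on a root $\mu$ records which of $\Sigma_+,\Sigma_-,\Sigma_0$ contains $\mu$.  Representing $f$ by a vector $y\in\ES$ via the Euclidean product and perturbing $y$ slightly off all reflecting hyperplanes of $\Sigma$ (possible because the condition $(z,\mu)\neq 0$ for $\mu\in\Sigma_0$ only excludes a finite union of proper hyperplanes), one obtains a regular vector $y'$ agreeing in sign with $f$ on $\Sigma_+\cup\Sigma_-$.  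The positive system of $\Sigma$ determined by $y'$ then contains $\Sigma_+$, and simple transitivity of $\WS$ on positive systems produces a $w\in\WS$ with $\rho\circ w$ positive.

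The uniqueness half hinges on identifying the stabilizer $\{w\in\WS \suchthat \rho\circ w = \rho\}$ with the reflection subgroup $W_{\Sigma_0}\subseteq\WS$ generated by the reflections $s_\mu$ for $\mu\in\Sigma_0$.  One inclusion is immediate from the identity $\rho(s_\mu\nu) = \rho(\nu) - \langle\nu,\mu\rangle\rho(\mu)$, since the correction vanishes when $\mu\in\Sigma_0$.  For the reverse, $\rho\circ w = \rho$ forces $(w-\id)\ES \subseteq \ker\rho_\bbR$; orthogonality of $w$ then forces $w$ to act as $\id$ on $(\ker\rho_\bbR)^\perp$, and the classical theorem that the pointwise stabilizer of a subspace inside a finite reflection group is generated by the reflections in the roots perpendicular to that subspace identifies the stabilizer with $W_{\Sigma\cap\ker\rho_\bbR} = W_{\Sigma_0}$.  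This stabilizer identification is the main obstacle of the argument; everything else is chamber geometry.

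With the stabilizer in hand, suppose both $\rho\circ w_1$ and $\rho\circ w_2$ are positive, and write $w_i(\Sigma^+) = \Sigma_+ \cup P_i$ for positive systems $P_i$ of $\Sigma_0$.  Picking $u_0\in W_{\Sigma_0}$ with $u_0(P_1) = P_2$ (possible by simple transitivity inside $\Sigma_0$) and observing that $u_0$, as an element of the stabilizer of $\rho$, must preserve $\Sigma_+$, one has $u_0 w_1(\Sigma^+) = w_2(\Sigma^+)$.  Simple transitivity of $\WS$ on positive systems then gives $u_0 w_1 = w_2$, whence $\rho\circ w_2 = \rho\circ u_0\circ w_1 = \rho\circ w_1$, proving uniqueness.
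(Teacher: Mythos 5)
Your proof is correct, but it follows a genuinely different route from the paper's. The paper works coordinate-wise: writing $\rho=\sum_i\rho_i\al_i$ with $\rho_i\in\Hom(\QS,\bbZ)$, it observes that $\rho(\mu)\in\Delta\cup\set{0}$ forces all the integers $\rho_i(\mu)$ to share a sign, and then applies the fundamental-domain property of the closed Weyl chamber $\overline{C(\Ps)}$ --- to the single functional $\sum_i\rho_i$ for existence, and to each $\rho_i$ separately for uniqueness (two points of $\overline{C(\Ps)}$ in the same $\WS$-orbit coincide). You instead reformulate positivity of $\rho\cdot w$ as the condition that $w(\Sigma^+)$ be a positive system containing $\Sigma_+=\rho^{-1}(\Delta^+)\cap\Sigma$, get existence by perturbing the pullback functional $(x,\rho_{\bbR}(\cdot))$ to a regular vector, and get uniqueness from simple transitivity of $\WS$ (resp.\ $W_{\Sigma_0}$) on positive systems of $\Sigma$ (resp.\ $\Sigma_0$). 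Both arguments are sound; yours yields more structural information (the stabilizer of $\rho$ is the reflection subgroup $W_{\Sigma_0}$, and the positive representatives of the orbit correspond to positive systems of $\Sigma_0$), while the paper's is shorter and leans on only one standard fact. One remark worth making: the step you single out as ``the main obstacle'' --- the reverse inclusion $\set{w\in\WS \suchthat \rho\circ w=\rho}\subseteq W_{\Sigma_0}$, via the theorem on pointwise stabilizers of subspaces --- is never actually used in your concluding paragraph. There you only need $u_0\in W_{\Sigma_0}$ to satisfy $\rho\circ u_0=\rho$ and hence to preserve $\Sigma_+$, which is exactly the easy inclusion coming from $\rho(s_\mu\nu)=\rho(\nu)-\langle\nu,\mu\rangle\rho(\mu)$ for $\mu\in\Sigma_0$. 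Dropping the reverse inclusion leaves your proof intact and makes it appreciably lighter.
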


\begin{proof}  We will need some notation for the proof.
First let $\overline{C(\Ps)} = \set{\mu\in \ES \suchthat (\mu,\nu) \ge 0 \text{ for } \nu\in \Ps}$
denote the closure of the fundamental Weyl chamber in $\ES$ determined by $\Ps$.
Also, if $\tau\in \hQo$, then (since $\QS$ is a lattice in $\ES$)
there exists a unique $\tau^*\in \ES$ such that
$(\tau^*,\mu) = \tau(\mu)$ for $\mu \in \QS$.  Since $\WS$  preserves the
form $(\ ,\ )$ we see that
\[(\tau\cdot w)^* = w^{-1} \tau^*\]
for $\tau\in \hQo$ and $w \in \WS$.
Moreover, if  $\tau\in \hQo$, then $\tau^*\in \overline{C(\Ps)}$
if and only if $\tau(\mu) \ge 0$ for $\mu\in \Sigma^+$.

To prove existence, suppose that $\rho \in \Hom(\Sigma,\Delta)$.
Since $\rho \in \Hom(\QS,\QD)$, there exists unique elements $\rho_1,\dots,\rho_\rD\in \Hom(\QS,\bbZ)$
such that $\rho(\mu) = \sum_{i=1}^\rD \rho_i(\mu) \al_i$ for $\mu\in \QS$, where
$\Pd= \set{\al_1,\dots,\al_\rD}$.
Then if $\mu\in\Sigma$, we have
\begin{equation}
\label{eq:rhocond}
\rho_i(\mu)\rho_j(\mu) \ge 0 \text{ for } \mu\in \Sigma\text{ and } 1\le i \ne j \le \rD,
\end{equation}
since  $\rho(\mu)\in \Sigma \cup \set{0}$.
Let  $\tilde{\rho}:= \sum_{i=1}^\rD\rho_i \in\hQo$.
Then, since $\overline{C(\Ps)}$ is a fundamental
domain for $\WS$
\cite[V, \S~3.2, Thm.~1]{Bour},
we can choose
$w\in \WS$ such that $w^{-1}\tilde \rho^*\in\overline{C(\Ps)}$.  Hence
$(\tilde\rho\cdot w)^* \in\overline{C(\Ps)}$, so  $(\tilde\rho \cdot w)(\mu)\ge 0$ for $\mu\in \Sigma^+$.
Therefore,  $\sum_{i=1}^\rD \rho_i(w\mu)  \ge 0$
for $\mu\in \Sigma^+$,
and hence, by \eqref{eq:rhocond}, $\rho_i(w\mu)\ge 0$ for $\mu\in \Sigma^+$, $1\le i \le \rD$. So
$\rho\cdot w$ is positive.

For uniqueness suppose that $\rho \in \Hom(\Sigma,\Delta)$, $w\in \WS$ and both
$\rho$ and $\rho\cdot w$ are positive.  Choose $\rho_1,\dots,\rho_\rD\in \Hom(\QS,\bbZ)$
as in the previous paragraph. Then $\rho_i(\mu) \ge 0$ and $(\rho_i\cdot w)(\mu) \ge 0$ for $\mu\in \Sigma^+$ and $1\le i \le \rD$.
Thus $\rho_i^*$ and $w^{-1}\rho_i^*$ are in $\overline{C(\Ps)}$. So,
again since $\overline{C(\Ps)}$ is a fundamental
domain for $\WS$, we have $\rho_i^* = w^{-1}\rho_i^*$ and hence $\rho_i = \rho_i\cdot w$.
Thus  $\rho = \rho\cdot w$.
\end{proof}

Finally, it is clear that
\[\Aut(\Pd) \cdot  \Hom_\pos(\Sigma,\Delta) \cdot \Aut(\Ps) \subseteq
\Hom_\pos(\Sigma,\Delta),\]
so
\emph{$\Aut(\Ps)$ (resp.~$\Aut(\Pd)$) acts on the right (resp.~left)
on $\Hom_\pos(\Sigma,\Delta)$ by composition}.

\subsection{The left action $*$ of $\Aut(\Delta)$ on $\Hom_\pos(\Sigma,\Delta)$ \protect}
\label{subsec:*}

If $\theta\in \Aut(\Delta)$ and $\rho\in \Hom_\pos(\Sigma,\Delta)$, we let
$\theta *\rho$ denote
the unique element of $\theta\cdot \rho \cdot W_\Sigma$ that is contained in
$\Hom_\pos(\Sigma,\Delta)$ (see Lemma \ref{lem:WSorbit}).
That is
\[\set{\theta *\rho} = \Hom_\pos(\Sigma,\Delta) \cap (\theta\cdot \rho \cdot W_\Sigma).\]
One checks easily that $* : (\theta,\rho) \mapsto \theta*\rho$ \emph{is a left action
of $\Aut(\Delta)$ on $\Hom_\pos(\Sigma,\Delta)$}, and that this action
commutes with the right action $\cdot$ of $\Aut(\Ps)$ on the same set.
(The latter fact uses only the observation that $\ph^{-1} W_\Sigma \ph \subseteq \WS$ for
$\ph \in \Aut(\Ps)$.)

\subsection{The sets $\Hom_\sh(\Sigma,\Delta)$ and $\Hom_\psh(\Sigma,\Delta)$ \protect}
\label{subsec:Homshort}
Suppose $\De$ is irreducible.   Let
\[\Hom_\sh(\Sigma,\Delta) := \set{\rho\in \Hom(\Sigma,\Delta) \suchthat \rho(\Sigma)\cap \De_\sh \ne \emptyset}.\]
Note that $\Aut(\Delta) \cdot  \Hom_\sh(\Sigma,\Delta) \cdot \Aut(\Sigma) \subseteq
\Hom_\sh(\Sigma,\Delta)$,
so \emph{$\Aut(\Sigma)$ (resp.~$\Aut(\Delta)$) acts on the right (resp.~left)
on $\Hom_\sh(\Sigma,\Delta)$ by composition}.

Also let
\[\Hom_\psh(\Sigma,\Delta) := \Hom_p(\Sigma,\Delta) \cap \Hom_\sh(\Sigma,\Delta).\]
Again $\Aut(\Pd) \cdot  \Hom_\psh(\Sigma,\Delta) \cdot \Aut(\Ps) \subseteq
\Hom_\psh(\Sigma,\Delta)$,
and hence  \emph{$\Aut(\Ps)$ (resp.~$\Aut(\Pd)$) acts on the right (resp.~left)
on $\Hom_\psh(\Sigma,\Delta)$ by composition}.

Finally, it is clear that  $\Aut(\Delta)*\Hom_\psh(\Sigma,\Delta)\subseteq \Hom_\psh(\Sigma,\Delta)$,
so \emph{we have a left action $*$ of $\Aut(\Delta)$
on  $\Hom_\psh(\Sigma,\Delta)$}.

\section{Root graded Lie algebras}
\label{sec:rootgraded}

\emph{We  will assume for the rest of the article that $\bbK$ is a
commutative associative ring of scalars containing $\frac 16$}.
 (We will add the additional assumption that $\bbK$ is an algebraically closed field of characteristic~0 in our classification results.)

\emph{All modules, algebras, trilinear pairs and triple systems will be assumed to be over~$\bbK$.}

\subsection{Terminology for graded structures}
\label{subsec:termgrade}  \emph{For graded algebras and graded trilinear pairs, we will use the unmodified terms simple and isomorphic
in the ungraded sense}.
More specifically, suppose $G$ be an abelian group.
A $G$-graded Lie algebra $L$ will be said to be \emph{simple} (resp.~\emph{graded simple}) if the only
non-trivial proper ideals (resp.~graded-ideals) of $L$ are $0$ and $L$.
If $L$ and $L'$ are $G$-graded algebras, we will say that $L$ is
\emph{isomorphic} (resp.~\emph{graded-isomorphic}) to $L'$, written $L\simeq L'$ (resp.~$L\simgr L'$), if there is an isomorphism (resp.~graded-isomorphism) of $L$ onto~$L'$.
We will use similar (and evident) terminology and notation for  graded trilinear pairs
(see  Subsection \ref{subsec:SPgr}).

If $L$ is an algebra, two $G$-gradings of $L$ are said to
be \emph{\griso{}}
if the corresponding graded algebras are graded-isomorphic.  (The modifier graded in not needed in this term since there is no ambiguity.)
Again we will use similar terminology for  gradings of trilinear pairs.

If $L$ is a  $G$-graded Lie algebra, an automorphism $\omega$ of $L$ is said to be \emph{grade reversing} if $\omega(L_\gamma) = L_{-\gamma}$ for  $\gamma \in G$.

If  $\rL$ is a $G$-graded algebra and  $\theta\in \Aut(G)$, we  let $\p{\theta}\rL$  be the
$G$-graded algebra  such that $\p{\theta}\rL = \rL$ as  algebras and
\[(\p{\theta}\rL)_\al = \rL_{\theta^{-1}\al}\]
for   $\al\in G$.  We  call $\p{\theta}\rL$ the \emph{$\theta$-image} of $\rL$.  Clearly   $\p{1} \rL = \rL$ and
\begin{equation}\label{eq:left0} \p{\theta_1}(\p{\theta_2}\rL) = \p{\theta_1\theta_2}\rL
\text{ for } \theta_1,\theta_2\in \Aut(G).
\end{equation}

\subsection{Root graded Lie algebras}
\label{subsec:rootgraded}
Let   $\De$ be a root system.

As in \cite{AFS}, a \emph{$\De$-grading} of a  Lie algebra $\rL$ will mean a $Q_\De$-grading of
$\rL$ such that $\supp_{Q_{\Delta}}(\rL) \subseteq \Delta \cup \set{0}$,
where  $\supp_{Q_{\Delta}}(\rL)$ denotes the \emph{support} of $\rL$ in $Q_\De$.
In that case we call $\rL$ together with the $\De$-grading a
\emph{$\De$-graded Lie algebra}. (\emph{We  do not assume the existence of a
grading subalgebra as in \cite{ABG} and \cite{BS},
or equivalently a  family of $\spl_2$-triples  as in \cite{N1}.})  Finally,
if $\De$ is irreducible of type $\Xn$, where $n = \rD$, we sometimes call
a $\De$-graded Lie algebra an  \emph{$\Xn$-graded Lie algebra}.

If $\Pd$ is a base for $\De$, \emph{we often use the basis $\Pd$ for $Q_\De$ to identify $Q_\De$
with $\bbZ^\rD$, and in this way $\De$-graded Lie algebras are
$\bbZ^\rD$-graded Lie algebras}.

If $\rL$ is  a $\De$-graded Lie algebra and $\theta\in \Aut(\De)$, then the $\theta$-image $\p{\theta}\rL$ of $L$
is a $\De$-graded Lie algebra.
If $\theta\in W_\De$,
we call   $\p{\theta}\rL$ a \emph{Weyl image} of~$\rL$.

\section{Root gradings of finite dimensional semi-simple Lie algebras}
\label{sec:rootgrG}

\emph{Suppose  in this section that
${\bbK}$ is an algebraically closed field
of characteristic 0, and that $\cG$ is a  finite dimensional semisimple Lie algebra over $\bbK$}.

\emph{Fix a Cartan subalgebra $\cH$ of $\cG$, let  $\cG = \oplus_{\mu\in \cH^*} \cG_\mu$  be the root space decomposition of $\cG$ relative to $\cH$, and let $\Sigma = \Sigma(\cG,\cH)$ be the root system of $\cG$ relative to $\cH$} (see Subsection \ref{subsec:rootsystem}).
\emph{We fix a base $\Ps$ for this root system}.

\subsection{Gradings of $\cG$ by a finitely generated free abelian group}
\label{subsec:ZngradG}
We first   describe the $G$-gradings of $\cG$
up to isomorphism, where $G$ is a free abelian group of finite rank written additively.

If $\rho\in\Hom(\QS,G)$, \emph{let $\cG(\rho)$ be the $G$-graded  Lie algebra whose underlying Lie algebra is $\cG$ and whose grading is defined by}
\begin{equation} \label{eq:Grho} \textstyle
\cG(\rho)_{\gamma}=\sum_{\mu\in \QS,\ \rho(\mu)=\gamma}\cG%
_{\mu}
\end{equation}
\emph{for $\gamma\in G$}.  We call this grading the
$\rho$\textit{-grading} of $\cG$.

It is easily checked  that
\begin{equation} \label{eq:left1}
\p{\theta}\cG(\rho) = \cG(\theta\cdot \rho) \end{equation}
as  $G$-graded Lie algebras for  $\theta\in \Aut(G)$ and $\rho \in \Hom(\QS,G)$.

\begin{remark} \label{rem:inducedgrading}
The  gradings
on $\p{\theta}L$ and $\cG(\rho)$ defined above are each examples of \emph{gradings induced by homomorphisms} as defined in \cite[\S 1.3]{EK}; and  \eqref{eq:left0} and \eqref{eq:left1} follow from an evident general fact about induced gradings. Nevertheless, we use different notations for $\p{\theta}L$ and $\cG(\rho)$ to emphasize the different roles they play in our  theory.
\end{remark}

The first statement  of the next proposition is
a corollary of a more general result on graded algebras
proved in \cite{EK} using methods from algebraic groups (see \cite[Prop.~1.34 and Cor.~1.35]{EK}).
It seems likely that the second and third statements are also known,
although we are not aware of a  reference.
Nevertheless, for the reader's convenience,
we give a proof of all three statements  using Lie algebra methods.

\begin{proposition}
\label{prop:Zngrading}  Suppose that $G$ is a finitely generated free abelian group.
\begin{itemize}
\item[(i)]  Any $G$-grading of
$\cG$ is \griso{}  to a $\rho$-grading for some $\rho\in\Hom(\QS,G)$.
\item[(ii)]  If $\rho,\rho'\in\Hom(\QS,G)$, then  the
$\rho$-grading and the $\rho'
$-grading are \griso{} if and only if $\rho,\rho'$ lie in the same
orbit in $\Hom(\QS,G)$ under the right action
of $\Aut(\Sigma)$ by composition.
\item[(iii)]  If $\cG = \bigoplus_{\gamma\in G} \cG_\gamma$ is a $G$-grading of $\cG$,
there exists a  period 2 grade reversing automorphism  of $\cG$. So
$\dim(\cG_{-\gamma}) = \dim(\cG_\gamma)$  for  $\gamma\in G$.
\end{itemize}
\end{proposition}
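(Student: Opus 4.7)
The plan is to fix an isomorphism $G \cong \bbZ^n$ with $n = \rank G$, and translate $\bbZ^n$-gradings of $\cG$ into tuples of commuting diagonalizable derivations so that classical Lie-algebra machinery can be brought to bear.

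For part (i), first note that a $\bbZ^n$-grading $\cG = \bigoplus_{\gamma\in\bbZ^n}\cG_\gamma$ is equivalent to a choice of $n$ mutually commuting diagonalizable derivations $d_1,\dots,d_n$ of $\cG$ with integer eigenvalues: let $d_i$ act as multiplication by $\gamma_i$ on $\cG_\gamma$, and check the Leibniz rule using $[\cG_\gamma,\cG_\delta]\subseteq \cG_{\gamma+\delta}$. Since $\cG$ is semisimple in characteristic zero, $\Der(\cG)=\ad\cG$, so $d_i = \ad h_i$ for commuting semisimple elements $h_i\in\cG$. The classical theory of semisimple Lie algebras guarantees that this commuting semisimple family lies in some Cartan subalgebra, and that any two Cartan subalgebras are conjugate under inner automorphisms; thus an inner automorphism $\sg$ of $\cG$ can be chosen with $\sg(h_i)\in\cH$ for every $i$. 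Setting $\rho(\mu) := (\mu(\sg h_1),\dots,\mu(\sg h_n))\in\bbZ^n$ for $\mu\in\QS$ defines a homomorphism $\rho\in\Hom(\QS,\bbZ^n)$, and a direct computation shows that $\sg$ is a graded isomorphism from the original grading onto the $\rho$-grading $\cG(\rho)$.

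For part (ii), the ``if'' direction follows from the standard lifting: for each $\theta\in\Aut(\Sigma)$ there is an automorphism $\ph$ of $\cG$ preserving $\cH$ with $\ph(\cG_\mu)=\cG_{\theta(\mu)}$ for every $\mu\in\Sigma$ (obtained by matching Chevalley generators), and an immediate computation from \eqref{eq:Grho} shows that $\ph$ is a graded isomorphism $\cG(\rho\cdot \theta^{-1})\to \cG(\rho)$. For the converse, let $\ph:\cG(\rho)\to\cG(\rho')$ be a graded isomorphism of $\cG$. Both $\cH$ and $\ph^{-1}(\cH)$ are Cartan subalgebras of $\cG$; by conjugacy of Cartans, after precomposing $\ph$ with an inner automorphism I may assume $\ph(\cH)=\cH$. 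Then $\ph$ permutes the root spaces via some $\theta\in\Aut(\Sigma)$, and the graded condition $\ph(\cG_\mu)\subseteq\cG(\rho')_{\rho(\mu)}$ forces $\rho'(\theta(\mu))=\rho(\mu)$ for all $\mu\in\Sigma$, placing $\rho$ and $\rho'$ in the same right $\Aut(\Sigma)$-orbit.

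For part (iii), by part (i) it suffices to treat a $\rho$-grading. The Chevalley involution $\omega$ of $\cG$ acts as $-\id$ on $\cH$ and sends $\cG_\mu$ to $\cG_{-\mu}$ for every root $\mu$, and has $\omega^2 = \id$. Feeding this into \eqref{eq:Grho} gives $\omega(\cG(\rho)_\gamma)=\cG(\rho)_{-\gamma}$, so $\omega$ is a period-$2$ grade-reversing automorphism, and the dimension equality follows. The main obstacle will be the simultaneous-conjugation step in parts (i) and (ii): producing a single inner automorphism that moves all of $h_1,\dots,h_n$ into $\cH$ (part (i)) and that repositions $\ph^{-1}(\cH)$ onto $\cH$ without destroying the grading-theoretic data being tracked (part (ii)). Once that is carefully arranged, the remaining work is bookkeeping with root space decompositions, conjugacy of Cartan subalgebras, and the classical lifting of $\Aut(\Sigma)$ to automorphisms of $\cG$ preserving $\cH$.
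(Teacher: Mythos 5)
Your overall strategy coincides with the paper's: convert the grading into commuting inner semisimple derivations, move the corresponding elements into $\cH$ and read off $\rho$; lift $\Aut(\Sigma)$ to $\Aut(\cG)$ for one direction of (ii); use the Chevalley involution for (iii). Parts (i) and (iii) are fine as written. In (i) there is in fact no obstacle of the kind you worry about at the end: the $h_i$ span an abelian ad-diagonalizable subalgebra, hence lie in a common Cartan subalgebra $\cH'$, and a single automorphism carrying $\cH'$ to $\cH$ suffices, because you are only claiming the given grading is \emph{isomorphic} to a $\rho$-grading, so transporting the grading by that automorphism costs nothing.

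The genuine gap is in the converse direction of (ii), and you have correctly located it but not closed it. If you adjust $\ph$ by an \emph{arbitrary} inner automorphism that conjugates $\ph(\cH)$ (or $\ph^{-1}(\cH)$) onto $\cH$, the resulting map is in general no longer a graded isomorphism $\cG(\rho)\to\cG(\rho')$, and without gradedness the final step $\rho'(\theta(\mu))=\rho(\mu)$ collapses. The fix is to observe that $\cH$ and $\ph(\cH)$ are both Cartan subalgebras of the reductive subalgebra $\cG(\rho')_0$, not merely of $\cG$, and to invoke the conjugacy theorem in the form (Humphreys, Cor.~16.4) that produces a correcting automorphism $\eta'$ of $\cG$ which is a product of factors $\exp(\ad x)$ with $x\in\cG(\rho')_0$ and $\ad x$ nilpotent. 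Each such factor preserves every homogeneous component $\cG(\rho')_\gamma$, so $\eta'$ is a graded automorphism of $\cG(\rho')$ and $\eta'\ph$ is still a graded isomorphism, now stabilizing $\cH$. This is exactly the step the paper takes; without it the assertion ``I may assume $\ph(\cH)=\cH$'' is unjustified.
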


\begin{proof}  If $G = 0$ the statements are trivial, so we can assume that $G = \bbZ^\ell$, where $\ell \ge 1$.

(i) Suppose that $\cG$ is $G$-graded.  Define derivations $d_{1},\ldots,d_{\ell}$
of $\cG$ by $d_{i}(x)=k_{i}x$ for $x\in\cG_{(k_{1},\ldots,k_{\ell})}$.  Since $\cG$ is semisimple, every derivation is
inner \cite[Thm.~5.3]{H}, so we can write $d_{i}=ad(h_{i}) $ with
$h_{i}\in\cG$ for $1\le i \le n$.  Since the $d_{i}$ are semisimple and commute, the
$h_{i}$ span an abelian  ad-diagonalizable subalgebra of $\cG$, and hence they are contained in a Cartan subalgebra
$\cH'$ of $\cG$  \cite[Cor.~15.3]{H}.  By \cite[Cor.~16.4]{H}, we may choose
$\eta\in \Aut(\cG)$ with $\eta
(\cH')=\cH$.  Using $\eta$
to transfer the grading, we can assume that $\cH'=\cH%
$.  Define $\rho_i\in \hQo$ by $\rho_{i}(\mu)=\mu(h_{i})$ for $\mu\in \QS$.
Then,  $d_{i}(x)=[h_{i},x]=\rho
_{i}(\mu)x$ for  $x\in\cG_{\mu}$.
Thus, the grading is the
$\rho$-grading of $\cG$, where $\rho (\al) = (\rho_1(\al),\dots,\rho_\ell(\al))$
for $\al\in \QS$.

(ii) Suppose first that $\rho=\rho'\cdot\ph$, where $\rho,\rho'\in\Hom(\QS,G)$, $\ph\in \Aut(\Sigma)$.
Then  \cite[Thm.~14.2]{H}
 tells us that there is $\eta\in \Aut(\cG)$ with $\eta(\cG_{\mu
})=\cG_{\ph(\mu)}$ for $\mu\in \QS$.  Hence for $\gamma \in G$ we have
$\eta(\cG(\rho)_{\gamma})=\sum_{\mu\in \QS,\ \rho(\mu)=\gamma
}\cG_{\ph(\mu)}=\sum_{\nu\in \QS,\ \rho'(\nu)=\gamma
}\cG_{\nu} \ = \ \cG(\rho')_{\gamma}$.
so the $\rho$ and $\rho'$-gradings are isomorphic.

Conversely,
suppose that $\rho,\rho'\in\Hom(\QS,G)$ and $\eta:\cG(\rho)\to \cG(\rho')$  is a $G$-graded-isomorphism. Clearly
$\cH$ is a Cartan subalgebra of $\cG(\rho)_{0}$, so
$\eta(\cH)$ and $\cH$ are Cartan subalgebras of
$\cG(\rho')_{0}$.  By Corollary 16.4 of \cite{H}, there exists
$\eta'\in \Aut(\cG)$ such that
$\eta'\eta(\cH)=\cH$ and $\eta'$ is
a product of automorphisms of $\cG$ the form
$\exp(\ad(x))$, where $x\in \cG(\rho')_{0}$
and the transformation $\ad(x)$ of $\cG$ is nilpotent.
Clearly, $\eta'$ is a
graded automorphism of $\cG(\rho')$, so we can
replace $\eta$ by $\eta'\eta$ to assume that $\eta(\cH%
)=\cH$.  Let $\cH^*$ be the dual space of $\cH$ and let $\eta^\sharp\in \GL(\cH^*)$ be the inverse dual of
$\eta|_\cH \in \GL(\cH)$, so $(\eta^\sharp(\mu))(\eta( h)) = \mu(h)$
for $h\in \cH$, $\mu\in \cH^*$. Then $\eta(\cG_\mu) = \cG_{\eta^\sharp(\mu)}$
for $\mu\in\cH^*$, so $\eta^\sharp(\Sigma) = \Sigma$.
Hence $\ph := \eta^\sharp|_\QS \in \Aut(\Sigma)$.
Thus for $\mu\in \Sigma$,  we have
\[\cG_{\ph(\mu)} = \eta(\cG_\mu) \subseteq \eta(\cG(\rho)_{\rho(\mu)})
= \cG(\rho')_{\rho(\mu)}.
\]
So $\rho'(\ph(\mu)) = \rho(\mu)$ for $\mu\in \Sigma$, and hence
$\rho  = \rho' \cdot \ph$.

(iii) By (i), we  can assume that the given grading of $\cG$ is
the $\rho$-grading where $\rho\in\Hom(\QS,G)$.
Choose $\omega\in\Aut(\cG)$
of period 2 of $\cG$ such that $\omega(\cG_\mu) = \cG_{-\mu}$ for $\mu \in \QS$
\cite[VIII, \S4.4, Prop.~5]{Bour}.    Then, by
\eqref{eq:Grho}, we have $\omega(\cG(\rho)_\gamma) = \cG(\rho)_{-\gamma}$ for~$\gamma \in G$.
\end{proof}

\subsection{Classification of the root gradings of \protect $\cG$ \protect}
\label{subsec:rootgrG}  The following lemma is clear from the  definitions involved.

\begin{lemma} \label{lem:rootgrG}\
If $\Delta$ is a root system and $\rho \in  \Hom(\QS,Q_\Delta)$, then  the $\rho$-grading of $\cG$ is
a $\Delta$-grading if and only if $\rho \in  \Hom(\Sigma,\Delta)$.
Moreover, if $\De$ is  irreducible and $\rho \in  \Hom(\Sigma,\Delta)$,
then $\cG(\rho)_\al \ne 0$ for some $\al\in \De_\sh$ if and only if $\rho\in \Hom_\sh(\Sigma,\Delta)$.
\end{lemma}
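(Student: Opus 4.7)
The plan is to compute the support of the $\rho$-grading on $Q_\Delta$ directly from the defining formula $\cG(\rho)_\gamma = \sum_{\mu\in \QS,\ \rho(\mu) = \gamma} \cG_\mu$ and then read off both assertions. First I would recall the usual root space decomposition: $\cG_\mu \ne 0$ precisely when $\mu \in \Sigma \cup \set{0}$ (taking $\cG_0 = \cH$). Consequently $\cG(\rho)_\gamma \ne 0$ if and only if there exists $\mu \in \Sigma \cup \set{0}$ with $\rho(\mu) = \gamma$, so the support of $\cG(\rho)$ in $Q_\Delta$ equals $\rho(\Sigma) \cup \set{0}$ (using $\rho(0) = 0$).

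For the first assertion, I would invoke the convention of Subsection \ref{subsec:rootgraded}: a $\Delta$-grading of $\cG$ is exactly a $Q_\Delta$-grading whose support is contained in $\Delta \cup \set{0}$. Combining with the support computation, the $\rho$-grading is a $\Delta$-grading if and only if $\rho(\Sigma) \cup \set{0} \subseteq \Delta \cup \set{0}$, equivalently $\rho(\Sigma) \subseteq \Delta \cup \set{0}$, which is precisely the defining condition for membership in $\Hom(\Sigma,\Delta)$ given in Subsection \ref{subsec:HomSD}.

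For the second assertion (under the assumption that $\Delta$ is irreducible, so that $\Delta_\sh$ is defined), I would use the same support computation: $\cG(\rho)_\al \ne 0$ for some $\al \in \Delta_\sh$ if and only if some short root of $\Delta$ lies in $\rho(\Sigma) \cup \set{0}$. Since $0 \notin \Delta_\sh$, this is equivalent to $\rho(\Sigma) \cap \Delta_\sh \ne \emptyset$, which is the defining condition for $\Hom_\sh(\Sigma,\Delta)$ from Subsection \ref{subsec:Homshort}.

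There is no serious obstacle here: the lemma is a transparent unwinding of the definitions of the $\rho$-grading, of $\Delta$-grading, and of the sets $\Hom(\Sigma,\Delta)$ and $\Hom_\sh(\Sigma,\Delta)$, which is why the paper flags it as clear. The only minor point worth being explicit about is that $0$ lies in the support of $\cG(\rho)$ (coming from $\cH = \cG_0$), and one must check that this causes no difficulty because $\Delta \cup \set{0}$ already contains $0$ in the first assertion and $0 \notin \Delta_\sh$ in the second.
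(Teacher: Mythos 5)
Your proof is correct and is exactly the unwinding of definitions that the paper has in mind; the paper omits the proof entirely, stating only that the lemma ``is clear from the definitions involved.'' Your computation of the support as $\rho(\Sigma)\cup\set{0}$ and the observation that $0\notin\De_\sh$ are precisely the points one needs to check.
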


We now classify the $\De$-gradings of $\cG$ up to isomorphism in  terms of
orbits in $\Hom_\pos(\Sigma,\De)$.

\begin{theorem} \label{thm:rootgrG}  Suppose
$\Delta$ is a root system with  base $\Pi_\Delta$.  Then
\begin{itemize}
\item[(i)]  Any $\Delta$-grading
of $\cG$ is \griso{} to the  $\rho$-grading of $\cG$ for some
$\rho\in \Hom_\pos(\Sigma,\Delta)$, where  $\Hom_\pos(\Sigma,\Delta)$ is the space of positive
homomorphisms of $\Sigma$ into $\Delta$ relative to $\Pi_\Sigma$ and $\Pi_\Delta$.
\item[(ii)]
If $\rho$ and $\tau$
are in $\Hom_\pos(\Sigma,\Delta)$, then the
$\rho$ and $\tau$-gradings of $\cG$ are \griso{}
if and only if $\rho$ and $\tau$ are in the same orbit in $\Hom_\pos(\Sigma,\Delta)$ under the right action of $\Aut(\Ps)$ by composition.
\end{itemize}
\end{theorem}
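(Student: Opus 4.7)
\textbf{Proof plan for Theorem \ref{thm:rootgrG}.}

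For part (i), I would start from an arbitrary $\Delta$-grading of $\cG$, which is by definition a $Q_\Delta$-grading with support in $\Delta\cup\{0\}$. Since $Q_\Delta$ is a finitely generated free abelian group, Proposition \ref{prop:Zngrading}(i) produces $\rho\in \Hom(Q_\Sigma,Q_\Delta)$ such that the given grading is graded-isomorphic to the $\rho$-grading $\cG(\rho)$. Because the image grading is $\Delta$-supported, Lemma \ref{lem:rootgrG} forces $\rho\in \Hom(\Sigma,\Delta)$. Now apply Lemma \ref{lem:WSorbit} to pick $w\in W_\Sigma$ with $\rho\cdot w\in \Hom_\pos(\Sigma,\Delta)$; since $W_\Sigma\subseteq \Aut(\Sigma)$, Proposition \ref{prop:Zngrading}(ii) says $\cG(\rho)\simgr \cG(\rho\cdot w)$, giving (i).

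For part (ii), one direction is direct: if $\rho=\tau\cdot\psi$ with $\psi\in \Aut(\Pi_\Sigma)\subseteq \Aut(\Sigma)$, then Proposition \ref{prop:Zngrading}(ii) yields $\cG(\rho)\simgr\cG(\tau)$. For the converse, suppose $\rho,\tau\in \Hom_\pos(\Sigma,\Delta)$ give graded-isomorphic gradings. Proposition \ref{prop:Zngrading}(ii) supplies $\varphi\in \Aut(\Sigma)$ with $\rho=\tau\cdot\varphi$. The plan is now to promote $\varphi$ to an element of $\Aut(\Pi_\Sigma)$ using the standard semidirect product decomposition $\Aut(\Sigma)=W_\Sigma\rtimes \Aut(\Pi_\Sigma)$ \cite[VI, \S 1.5]{Bour}. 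Write $\varphi=w\psi$ with $w\in W_\Sigma$ and $\psi\in \Aut(\Pi_\Sigma)$, so $\rho\cdot\psi^{-1}=\tau\cdot w$. Since $\psi^{-1}$ stabilizes $\Pi_\Sigma$, the left side lies in $\Hom_\pos(\Sigma,\Delta)$, hence so does $\tau\cdot w$. But $\tau$ is itself positive, so $\tau$ and $\tau\cdot w$ are two positive elements of the same $W_\Sigma$-orbit; by the uniqueness clause in Lemma \ref{lem:WSorbit} they must coincide, giving $\rho=\tau\cdot\psi$ with $\psi\in \Aut(\Pi_\Sigma)$.

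The main obstacle I anticipate is the converse in (ii): one needs precisely the right combination of the uniqueness of positive representatives (Lemma \ref{lem:WSorbit}) and the semidirect decomposition of $\Aut(\Sigma)$ to conclude that the $\Aut(\Sigma)$-conjugacy coming from Proposition \ref{prop:Zngrading}(ii) can be refined to $\Aut(\Pi_\Sigma)$-conjugacy. Everything else amounts to stringing together results already proved in Sections \ref{sec:HomSD} and \ref{subsec:ZngradG}.
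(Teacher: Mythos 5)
Your proposal is correct and follows essentially the same route as the paper: part (i) via Proposition \ref{prop:Zngrading}(i), Lemma \ref{lem:rootgrG}, and Lemma \ref{lem:WSorbit}, and part (ii) via the decomposition $\Aut(\Sigma)=W_\Sigma\rtimes\Aut(\Pi_\Sigma)$ combined with the uniqueness clause of Lemma \ref{lem:WSorbit}. The only (immaterial) difference is the order in which you factor the automorphism of $\Sigma$.
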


\begin{proof}  (i):  By   Proposition \ref{prop:Zngrading}(i),
we can assume the grading is a $\rho$-grading for some $\rho\in \Hom(\QS,\QD)$.
Then, by Lemma \ref{lem:rootgrG}, $\rho\in \Hom(\Sigma,\Delta)$.
Thus, by Lemma \ref{lem:WSorbit} and
Proposition \ref{prop:Zngrading}(ii), we can assume that $\rho\in  \Hom_\pos(\Sigma,\Delta)$.

(ii): The implication ``$\Leftarrow$'' follows from
Proposition \ref{prop:Zngrading}(ii).
For the converse, suppose that the $\rho$ and $\tau$-gradings are \griso{}.
Then, by Proposition \ref{prop:Zngrading}(ii), there exists
$\ph\in \Aut(\Sigma)$ such that $\tau = \rho\cdot \ph$.
By
\cite[VI, \S~1.5, Prop.~16]{Bour},
we may write $\ph = \pi w$, where $\pi\in \Aut(\Ps)$ and
$w\in \WS$.  So $\tau \cdot w^{-1} = \rho \cdot \pi$ is positive,
and hence by uniqueness  in Lemma \ref{lem:WSorbit}, $\tau = \tau \cdot w^{-1}$.
Thus $\tau = \rho \cdot \pi$ as  desired.
\end{proof}

\section{Kantor pairs}
\label{sec:rgKP}

We assume again that
\emph{${\bbK}$ is a commutative associative ring containing $\frac 1 6$}.
 \protect
\subsection{Trilinear pairs and triple systems}
\label{subsec:trilinearpair}
A \emph{trilinear pair}  is by definition a triple $(P,\tprod^-,\tprod^+)$, consisting of a pair $P = (P^-,P^+)$ of ${\bbK}$-modules together with two trilinear
products $\tprod^\sg : P^\sg \times P^\msg \times P^\sg \to P^\sg$, $\sg = \pm$.
The map $\tprod^\sg$ is called the \emph{$\sg$-product} for $P$, $\sg = \pm$.  We usually abbreviate $(P,\tprod^-,\tprod^+)$ as $P$ or $(P^-,P^+)$.

If $P = (P^-,P^+)$ is   a trilinear pair, the \emph{opposite} of $P$ is the
trilinear pair
$P^\op$, with $(P^\op)^\sg = P^\msg$ and with $\sg$-product
equal to the $\msg$-product of $P$ for $\sg = \pm$.

A \emph{triple system} (sometimes also called a \emph{ternary algebra}) is a pair $(X,\tprod)$ consisting of a ${\bbK}$-module $X$ together with a trilinear
product $\tprod : X \times X \times X \to X$.  Again we usually write $(X,\tprod)$ simply as $X$.

We have evident notions of ideal, simplicity and isomorphism for trilinear pairs \cite[\S 2.1]{AFS} and for triple systems.

There are two natural ways to obtain a trilinear pair from a triple system $X$.  Indeed,  if $\xi = \pm 1$,
the $\xi$-\emph{double} of a triple system $X$
is the  trilinear pair $(X,X)$ with $\sg$-products defined by
$\tprod^+ = \tprod$ and $\tprod^- = \xi\tprod$.  We  call the $1$-double (resp.~the $(-1)$-double) of $X$
the \emph{double} of $X$ (resp.~\emph{the signed double}) of~$X$.

A \emph{polarization} of a triple system $X$ is a module decomposition
$X = X^-\oplus X^+$ such that $\{X^\sg,X^\msg,X^\sg\} \subseteq X^\sg$,  $\{X^\sg,X^\sg,X\} = 0$ and
$\{X,X^\sg,X^\sg\} = 0$ for $\sg = \pm$.  We say that $X$ is \emph{non-polarized} if
$X$ does not have a polarization.

As noted in \cite[\S 2.1]{AFS}, the following fact is easy to verify:

\begin{lemma}\label{lem:double}  Suppose $X$ is a triple system and $\xi = \pm 1$.
Then the $\xi$-double of $X$ is  a simple  pair if and only if $X$ is non-polarized and simple.
\end{lemma}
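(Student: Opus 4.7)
The plan is to verify this lemma by a direct translation between ideals of the pair $P := (X,X)$ and appropriate substructures of $X$. First I would unpack the definition of an ideal of $P$: a pair $(I^-, I^+)$ of submodules is an ideal iff, for $\sigma = \pm$, one has $\{I^\sigma, X, X\} \subseteq I^\sigma$, $\{X, X, I^\sigma\} \subseteq I^\sigma$, and the ``crossed'' conditions $\{X, I^{-\sigma}, X\} \subseteq I^\sigma$. The scalar $\xi = \pm 1$ cancels on both sides of each containment, so it plays no role beyond bookkeeping.

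For the forward direction, if $J$ is an ideal of the triple system $X$ then $(J, J)$ is immediately an ideal of $P$ from the three ``diagonal'' conditions, so simplicity of $P$ forces $J = 0$ or $J = X$, giving simplicity of $X$. If $X = X^- \oplus X^+$ is a polarization, I would check directly that $(X^-, X^+)$ satisfies all six ideal conditions on $P$ (the polarization axioms $\{X^\sigma, X^\sigma, X\} = 0 = \{X, X^\sigma, X^\sigma\}$ collapse most triple products to elements of the opposite summand). If the polarization is nontrivial this contradicts simplicity of $P$; the degenerate case where one summand is zero forces $\{X,X,X\} = 0$, which I would rule out by the standing convention that simple triple systems have nonzero triple product.

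For the converse, suppose $I = (I^-, I^+)$ is a nonzero proper ideal of $P$. Set $J := I^- + I^+$ and $K := I^- \cap I^+$; the three diagonal ideal conditions show at once that both $J$ and $K$ are ideals of $X$. Simplicity of $X$ then leaves only $J = X$ and $K = 0$ as options compatible with $I \neq 0$ and $I \neq P$, yielding a module decomposition $X = I^- \oplus I^+$. The key step is then to use the crossed conditions $\{X, I^{-\sigma}, X\} \subseteq I^\sigma$ together with the diagonal ones to show that $\{I^\sigma, I^\sigma, X\}$ and $\{X, I^\sigma, I^\sigma\}$ each land in $I^- \cap I^+ = K = 0$, which are exactly the polarization axioms for $X = I^- \oplus I^+$. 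If both $I^\pm$ are nonzero this is a nontrivial polarization contradicting the hypothesis; the remaining degenerate case $I^\sigma = 0$ can be excluded because the crossed condition then forces $\{X,X,X\} = 0$, contradicting simplicity of $X$.

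The whole proof is routine bookkeeping and I do not expect a genuine obstacle. The only point requiring attention is the pivotal role of the \emph{crossed} ideal conditions of the $\xi$-double: they are what promote an abstract $\bbK$-module decomposition $X = I^- \oplus I^+$ to a genuine polarization, and they also are what collapse the degenerate possibilities $I^\sigma = 0$ back to the trivial-product case excluded by simplicity.
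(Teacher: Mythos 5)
Your proof is correct and complete. The paper itself supplies no argument for this lemma --- it is stated as ``easy to verify'' with a pointer to \cite[\S 2.1]{AFS} --- so there is nothing to diverge from, and your dictionary is exactly the intended verification: an ideal $J$ of $X$ yields the ideal $(J,J)$ of the $\xi$-double, a polarization $X = X^-\oplus X^+$ yields the ideal $(X^-,X^+)$, and conversely an ideal $(I^-,I^+)$ of the double yields the ideals $I^-+I^+$ and $I^-\cap I^+$ of $X$, after which the crossed conditions $\{X,I^{-\sigma},X\}\subseteq I^\sigma$ upgrade the direct-sum decomposition to a polarization. The one point worth making explicit is that your exclusion of the degenerate cases rests on the convention (implicit in the paper's appeal to \cite[\S 2.1]{AFS}) that simplicity of a triple system or trilinear pair includes nontriviality of the products; you invoke this correctly, and in fact even without it a zero-product $X$ of dimension $\ge 1$ admits the proper ideal $(X,0)$ in its double and the polarization $X\oplus 0$, so the equivalence is unaffected.
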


We say that two triple systems $X$ and $X'$  are \emph{isotopic} if there exist
linear isomorphisms $\ph^\sg : X \to X'$ such that
$\ph^\sg\{x,y,z\} = \{ \ph^\sg x,\ph^\msg y,\ph^\sg z\}$ for $\sg = \pm$.  (In  \cite{K1,K2},
Kantor uses the term \emph{weakly isomorphic} rather than isotopic.)  Note that if $X$ and $X'$ are triple systems, then
$X$ and $X'$ are isotopic if and only if their doubles (resp.~their signed doubles) are  isomorphic.

\subsection{Kantor pairs}
\label{subsec:Kantorpair}
If $P$ is a trilinear pair, $\sg = \pm$,   $x\in P^\sg$, $y\in P^\msg$ and $z\in P^\sg$,
 we define
 $D^\sg(x,y)\in \End(P^\sg)$ and $K^\sg(x,z)\in \Hom(P^\msg,P^\sg)$ by
 \[D^\sg(x,y)u  = \{x,y,u\}^\sg \andd K^\sg(x,z)w= \{x,w,z\}^\sg-\{z,w,x\}^\sg.\]
for $u\in P^\sg$, $w\in P^\msg$, $\sg = \pm$.
We then say that $P$ is a  \emph{Kantor pair} \cite{AF1, AFS} if the following identities hold:
\begin{gather*}
[D^\sg(x,y),D^\sg(z,w)] =
D^\sg(D^\sg(x,y)z,w) -
D^\sg(z,D^\msg(y,x)w),\\
K^\sg(x,z)D^\msg(w,u)+ D^\sg(u,w)K^\sg(x,z) =
K^\sg(K^\sg(x,z)w,u)
\end{gather*}
for $x,z,u  \in P^\sg$,  $y,w \in P^\msg$, $\sg = \pm$,

Clearly, the opposite of a Kantor pair
is a Kantor pair.

A (linear) \emph{Jordan pair} is a Kantor pair $P$ such that $K^\sg(P^\sg,P^\sg) = 0$ for $\sg = \pm 1$.
These pairs make up the best understood and most studied class of Kantor pairs  \cite{L,N1}.  The reader can consult
\cite{AFS} (and its references) to see many other examples of Kantor pairs.

\subsection{Freudenthal-Kantor triple systems}  \label{subsec:FKTS}
If  $\xi = \pm 1$, a \emph{$(-\xi,1)$-Freudenthal-Kantor triple system (abbreviated as $(-\xi,1)$-FKTS)}, is
a triple system $X$ whose $\xi$-double is a Kantor pair \cite{YO}.  The defining identities for
such a triple system are:
\begin{gather*} \label{eq:FKTS1}
[L(x,y),L(z,w)] =
L(L(x,y)z,w) -
\xi L(z,L(y,x)w),\\
\xi K(x,z)L(w,u)+ L(u,w)K(x,z) =
K(K(x,z)w,u)  \label{eq:FKTS2}
\end{gather*}
for $x,y,z,w, u  \in X$,
where $L(x,y), K(x,z)\in \End(X)$ are defined by
$L(x,y)z  = \{x,y,z\}$, $K(x,z)y = \{x,y,z\}-\{z,y,x\}$.

\begin{example} \label{ex:FKTS}  We now mention two special cases which together with Remark \ref{rem:11FKTS}  explain the names used in the term
Freudenthal-Kantor triple system.

(i):  We call a $(-1,1)$-FKTS  a \emph{Kantor triple system}, since these triple systems were studied in depth by Kantor in \cite{K1,K2} (where they were called generalized Jordan triple systems of second order).

(ii):  Suppose that $\bbK$ is a field.   A $(1,1)$-FKTS $X$ is said to be
\emph{balanced} if $K(x,y) = \langle x,y \rangle \id_X$ for some  bilinear
form $\bform : X\times X \to \bbK$ \cite[\S 1]{EKO}.  In that case
$\bform$ is a uniquely determined skew-symmetric form that we call the \emph{skew form} of $X$.
If $\bform$ is non-degenerate, we see  in the following remark that $X$ is a  Freudenthal triple system \cite{M1}
with a slightly modified product.
\end{example}

\begin{remark}\label{rem:11FKTS}  There are three other classes of
triple systems which have been studied in the literature,
whose definitions involve a skew form, and  which are none other
than balanced $(1,1)$-FKTS's with slightly modified products.
To be more precise, suppose  $\bbK$ is a field, $X$ is a finite dimensional vector space,
$\langle \, ,\, ,\, \rangle : X \times X \times X \to X$   is a trilinear product
and  $\bform : X \times X \to \bbK$ is a non-degenerate skew-symmetric form.
Then $(X,\langle \, ,\, ,\, \rangle,\bform)$ is a
\emph{balanced symplectic ternary algebra} \cite{FF},
a \emph{symplectic triple system} \cite{YA}, or a  \emph{Freudenthal triple system}  if and only if
$(X,\tprod)$ is a balanced $(1,1)$-FKTS with skew form $\bform$, where
\begin{gather*}
\{x,y,z\} = \langle z,x,y \rangle, \quad
\{x,y,z\} = -\frac 12 \langle x,y,z \rangle + \frac 12 \langle x,y\rangle z \quad \text{ or } \\
\{x,y,z\}= -\frac 12 \langle x,y,z \rangle +
\frac 12 \langle x,y \rangle z+ \frac 12 \langle z,x \rangle y+ \frac 12 \langle z,y \rangle x
\end{gather*}
respectively. (This is easy to check in the first case.
For the other two cases see \cite[Thm.~2.18]{E1} and \cite[Thm.~4.7]{E2}.)
Here for the definition of a Freudenthal
triple system we use the one given in \cite{M1} except that we allow
the quartic form  $\langle \, , \langle \, , \, , \, \rangle \rangle$   to be trivial; \emph{in other words
we allow the product $\langle \, , \, , \, \rangle$  in a Freudenthal triple system to be  trivial}.
%
\end{remark}

\subsection{$\BCone$-graded Lie algebras and Kantor pairs}
\label{subsec:BC1}

The motivation for the study of Kantor pairs is their relationship with $\BCone$-graded Lie algebras.

To recall this from \cite{AFS}  suppose that
\[\De = \De_{\BCone} := \set{-2\al_1,-\al_1,\al_1,2\al_1}\]
\emph{is the irreducible  root system of type $\BCone$ with base  $\Pd = \set{\al_1}$}.
We identify $Q_\De = \bbZ$ using the $\bbZ$-basis $\Pd$ for $Q_\De$.
Then a $\BCone$-graded Lie algebra  is the same thing as a \emph{$5$-graded} Lie algebra
$L = L_{-2} \oplus L_{-1} \oplus L_{0} \oplus L_{1} \oplus L_{2}$.

If
$\rL =  L_{-2} \oplus L_{-1} \oplus L_{0} \oplus L_{1} \oplus L_{2}$
is a  $5$-graded Lie algebra, then $P = (\rL_{-1},\rL_{1})$ is a Kantor pair
with products defined by
\[\{x,y,z\}^\sg = [[x,y],z]\]
for $x,z\in \rL_{\sg 1}$, $y\in \rL_{\msg 1}$, $\sg = \pm$ \cite[Thm.~7]{AF1}.
We call  $P$ the \emph{Kantor pair enveloped by the $5$-graded Lie algebra
$\rL$}, and we say that \emph{the $5$-graded Lie algebra $L$ envelops~$P$}.

Note that $\De_\sh = \set{\pm\al_1}$.  So if $L$ is a $5$-graded Lie algebra
and $P$ is the  Kantor pair enveloped by $L$, then $P^-\oplus P^+ =  \bigoplus_{\al\in \De_\sh} L_\al$ in~$L$.

Every Kantor pair is enveloped by some $5$-graded Lie algebra.  Indeed,
given a Kantor pair $P$, there exists a $5$-graded Lie algebra $\Kan(P)$,
which is unique up to graded-isomorphism, such that
$\Kan(P)$ envelops $P$, $\Kan(P)$ is generated as an algebra by $T$, where
$T= \Kan(P)_{-1} + \Kan(P)_{1}$, and the centre of $\Kan(P)$ intersects trivially with~$[T,T]$ (see \cite[Cor.~3.5.2]{AFS}).
The   \emph{$5$-graded Lie algebra $\Kan(P)$ is called the
Kantor Lie algebra} of $P$, and it is constructed explicitly in \cite[\S 3--4]{AF1}
(see also \cite[\S 3.3]{AFS})
with
\begin{equation}
\label{eq:Kandef}
\Kan(P)_{\sg 1} \simeq_{\text{$\bbK$-mod}} P^\sg ,\ \Kan(P)_{\sg 2} \simeq_{\text{$\bbK$-mod}} K^\sg(P^\sg,P^\sg),\ \Kan(P)_{0} = [\Kan(P)_{-1}, \Kan(P)_{1}]
\end{equation}
for $\sg = \pm$, where $\simeq_{\text{$\bbK$-mod}}$ indicates isomorphism as $\bbK$-modules.

Note that isomorphisms of Kantor pairs  induce graded isomorphisms of their Kantor Lie algebras (see \cite[\S 3.5]{AFS}), and hence two Kantor pairs are isomorphic if and only if their Kantor Lie algebras are graded-isomorphic.

\begin{lemma} \label{lem:chardouble}  Suppose that $P$ is a Kantor pair and $\xi = \pm 1$.  If
$P$ is isomorphic to the $\xi$-double of a trilinear pair, then
there is grade reversing automorphism $\omega$ of $\Kan(P)$ such that $\omega^2\mid_{P^-+P^+} = \xi \id_{P^-+P^+}$.
Conversely if $L$ is a $5$-graded Lie algebra that envelops $P$ and
there is grade reversing automorphism $\omega$ of $L$ satisfying $\omega^2\mid_{P^-+P^+} = \xi \id_{P^-+P^+}$,
then $P$ is isomorphic to the $\xi$-double of the triple system $P^-$ with product
$\set{x,y,z} = \set{x,\omega y, z}^-$.
\end{lemma}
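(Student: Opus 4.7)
The plan is to establish the two directions separately.

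\medskip

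\emph{Forward direction.} Suppose $P$ is isomorphic to the $\xi$-double of a triple system $X$, via a pair isomorphism $\psi:\xi X \to P$ with components $\psi^\pm:X\to P^\pm$. I would construct a Kantor pair isomorphism $\omega_0:P\to P^\op$ by setting
\[\omega_0^+ := \psi^-\circ(\psi^+)^{-1}, \qquad \omega_0^- := \xi\,\psi^+\circ(\psi^-)^{-1}.\]
A routine check from the definition of the $\xi$-double (and the fact that $\psi$ is a pair isomorphism) shows that $\omega_0$ is indeed a Kantor pair isomorphism $P\to P^\op$ and satisfies $\omega_0^+\omega_0^- = \xi\,\id_{P^-}$ and $\omega_0^-\omega_0^+ = \xi\,\id_{P^+}$. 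The next step uses two general facts about Kantor Lie algebras that are recorded immediately before the statement: (a) functoriality---$\omega_0$ lifts to a graded Lie algebra isomorphism $\Kan(P)\to\Kan(P^\op)$; and (b) uniqueness---the $5$-graded Lie algebra $\p{-1}\Kan(P)$ (that is, $\Kan(P)$ with the grading reversed) envelops $P^\op$, is generated by its degree $\pm 1$ parts, and has centre intersecting trivially with $[T,T]$, so that $\Kan(P^\op)\simgr \p{-1}\Kan(P)$. Composing yields an automorphism $\omega$ of $\Kan(P)$ that reverses the grading and restricts to $\omega_0$ on $T = P^-\oplus P^+$; the identities above give $\omega^2|_T = \xi\,\id_T$.

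\medskip

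\emph{Converse direction.} Given $L$ and $\omega$ as hypothesized, endow $X:=P^-$ with the triple product $\{x,y,z\} := \{x,\omega y,z\}^-$, which is well-defined since $\omega(L_{-1}) = L_1$, so $\omega y \in P^+$. Define $\phi:\xi X \to P$ by $\phi^-(x):=x$ and $\phi^+(x):=\xi\,\omega(x)$; both components are linear isomorphisms. The pair-compatibility condition for $\phi^-$ is immediate from the definitions, since the factor $\xi$ in the $-$-product of $\xi X$ is absorbed into the $\xi$ appearing in $\phi^+$. The condition for $\phi^+$ is where the hypothesis $\omega^2|_T = \xi\,\id_T$ is essential: applying $\omega$ to $\{x,\omega y,z\}^- = [[x,\omega y],z]$ and using that $\omega$ is a Lie algebra automorphism yields $[[\omega x,\omega^2 y],\omega z] = \xi[[\omega x,y],\omega z] = \xi\{\omega x,y,\omega z\}^+$, and tracking the remaining factors of $\xi$ produces the required identity $\phi^+\{x,y,z\}_X = \{\phi^+ x,\phi^- y,\phi^+ z\}^+$.

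\medskip

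\emph{Main obstacle.} The principal subtlety is in the forward direction, specifically in passing from the pair-level map $\omega_0$ to a genuine Lie algebra automorphism of $\Kan(P)$. This rests on the functoriality of $\Kan$ combined with the identification $\Kan(P^\op)\simgr\p{-1}\Kan(P)$ coming from the uniqueness characterisation of the Kantor Lie algebra; once these are in hand, the remaining verifications are algebraic bookkeeping with the definitions of the $\xi$-double, the opposite pair, and the bracket formula $\{x,y,z\}^\sg = [[x,y],z]$.
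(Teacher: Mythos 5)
Your proposal is correct and follows essentially the same route as the paper: for the forward direction the paper takes $P$ equal to the $\xi$-double and uses the pair isomorphism $(\id_X,\xi\id_X):P\to P^\op$, lifts it via functoriality of $\Kan$, and identifies $\Kan(P^\op)$ with $\Kan(P)$ with the grading reversed; for the converse it exhibits $(\id_{P^-},\omega|_{P^+})$ as an isomorphism of $P$ onto the $\xi$-double, which is just the inverse of your map $\phi$. Your version carries the extra factor of $\xi$ and the conjugation by $\psi$ explicitly, but the content is the same.
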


\begin{proof}   (See also  \cite[pp.~5--6]{L} and \cite[Thm.~1]{At} for related observations.) For the first statement we can assume that $P$ equals
the $\xi$-double of a trilinear pair $X$.  Then $(\id_X,\xi \id_X)$  is an isomorphism of $P$ onto $P^\op$, which therefore induces an isomorphism
$\omega$ of $\Kan(P)$ onto $\Kan(P^\op)$.   Since $\Kan(P^\op)$ is $\Kan(P)$ with the grading
reversed, we have the first statement.
For the second statement, an easy calculation shows that $(\id_{P^-},\omega|_{P^+})$ is an isomorphism of $P$ onto the $\xi$-double of the triple system $P^-$ with the indicated  product.
\end{proof}

If $P$ is a Kantor pair, then \cite[Prop.~2.7(iii)]{GLN}
\begin{equation}\label{eq:Kansimp}
P \text{ is simple if and only if } \Kan(P) \text{ is simple}.
\end{equation}
Hence, any simple Kantor pair is enveloped by a simple $5$-graded Lie algebra.
Conversely we  have  the following proposition:

\begin{proposition}
\label{prop:BC1}  \cite[Thm.~3.5.5 and Lemma 3.2.3]{AFS}
Let $P$ be a nonzero Kantor pair and suppose that $L$ is a simple $5$-graded
Lie algebra that envelops $P$.  Then $P$ is simple, $L \simgr \Kan(P)$ and
\begin{equation} \label{eq:Ldecomp}  L = [L_{-1}, L_{-1}] \oplus L_{-1} \oplus [L_{-1}, L_{1}] \oplus L_1
\oplus [L_{1}, L_{1}]\end{equation}
\end{proposition}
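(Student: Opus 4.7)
My plan is to first establish the decomposition by exhibiting the right-hand side as a graded ideal of $L$, and then use the decomposition together with the universal property characterizing the Kantor Lie algebra to conclude $L \simgr \Kan(P)$ and (via \eqref{eq:Kansimp}) that $P$ is simple.

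\textbf{Decomposition.}  I would set
\[
L' := [L_{-1},L_{-1}] \oplus L_{-1} \oplus [L_{-1},L_1] \oplus L_1 \oplus [L_1,L_1],
\]
a graded subspace of $L$, and verify that $L'$ is a graded ideal, i.e., that $[L_i, L'_j] \subseteq L'_{i+j}$ for all $i,j \in \{-2,\dots,2\}$. Many containments are immediate: if $|i+j|>2$ the bracket vanishes by $5$-gradedness, and if $i+j = \pm 1$ it lands in $L_{\pm 1} = L'_{\pm 1}$. The substantive cases are the few that target $L'_{\pm 2}$ or $L'_0$ starting from an iterated bracket, and each is handled by one application of the Jacobi identity. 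A representative calculation is
\[
[L_2,[L_{-1},L_{-1}]] \subseteq [[L_2,L_{-1}],L_{-1}] + [L_{-1},[L_2,L_{-1}]] \subseteq [L_1,L_{-1}] + [L_{-1},L_1] = L'_0,
\]
and the remaining nontrivial cases (e.g.\ $[L_0,[L_{-1},L_1]]$, $[L_{-2},[L_{-1},L_1]]$, $[L_0,[L_{\pm 1},L_{\pm 1}]]$, and their $\pm$-swaps) follow by the same expansion. Since $P$ is nonzero, $L' \supseteq L_{-1} + L_1$ is a nonzero graded ideal of the simple Lie algebra $L$, forcing $L' = L$, which is the claimed decomposition.

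\textbf{Identification with $\Kan(P)$ and simplicity of $P$.}  The decomposition shows that $L$ is generated as a Lie algebra by $T := L_{-1} + L_1$. Moreover, since $L$ is simple, its centre $Z(L)$ is either $0$ or all of $L$; in the latter case $L$ would be abelian, and then the decomposition would collapse to $L = L_{-1}$ or $L = L_1$, whence $[T,T] = 0$. Thus $Z(L) \cap [T,T] = 0$ in either case, so $L$ satisfies the three properties characterizing $\Kan(P)$ listed just before \eqref{eq:Kandef}; the uniqueness asserted there yields $L \simgr \Kan(P)$. Simplicity of $L$ now transfers to $\Kan(P)$, and \eqref{eq:Kansimp} gives simplicity of $P$.

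\textbf{Main obstacle.}  The only delicate step is the ideal verification. What makes it succeed is that $L$ is $5$-graded rather than longer-graded: the Jacobi expansions of iterated brackets like $[L_2,[L_{-1},L_{-1}]]$ produce only terms of the form $[L_{\pm 1},L_{\mp 1}]$, which are precisely the summands defining $L'_0$; for a $\bbZ$-grading of wider support, stray terms would appear outside $L'$ and the argument would break. Once the ideal verification is in hand, the remainder is a direct appeal to the universal property of $\Kan(P)$.
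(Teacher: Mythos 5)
Your argument is correct. Note, though, that the paper does not prove this proposition at all: it is quoted verbatim from \cite{AFS} (Thm.~3.5.5 and Lemma 3.2.3), so there is no in-paper proof to compare against; what you have written is a self-contained reconstruction. It holds up: the subspace $L'$ really is a graded ideal (the only cases needing the Jacobi identity are those landing in degrees $0,\pm 2$, and in each of them the stray term such as $[L_{-1},[L_2,L_1]]$ dies because $L_{\pm 3}=0$, exactly as you observe), so simplicity of $L$ and $L_{-1}+L_1\neq 0$ give the decomposition. The decomposition in turn shows $L$ is generated by $T=L_{-1}+L_1$, and your centre argument ($\Centre(L)$ is an ideal, hence $0$ or $L$, and in the latter case $[T,T]=0$) verifies the third property in the characterization of $\Kan(P)$ stated before \eqref{eq:Kandef}; the uniqueness asserted there then yields $L\simgr\Kan(P)$, and \eqref{eq:Kansimp} gives simplicity of $P$. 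The one cosmetic blemish is the aside that an abelian simple $L$ would "collapse to $L=L_{-1}$ or $L=L_1$" --- this is neither needed nor quite justified, since all you use is that abelianness forces $[T,T]=0$; I would simply delete that clause.
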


\subsection{Balanced dimension and close-to-Jordan pairs} \label{subsection:baldim}
Suppose  in this subsection that
$\bbK$ is a field.

We say that  a Kantor pair $P$ is \emph{finite dimensional} if each $P^\sg$ is finite dimensional.  In view of \eqref{eq:Kandef},
$P$ is finite dimensional if and only if $\Kan(P)$ is finite dimensional.

As in \cite[\S 2.1]{AFS}, we say that a  Kantor pair $P$ has
\emph{balanced dimension}, if $\dim(P^+) = \dim(P^-) =d$, where $d$ is a non-negative integer,
in which case we call $d$ the \emph{balanced dimension} of $P$.  Similarly, as in \cite[\S 3.8]{AFS},
we  say $P$ has \emph{balanced 2-dimension}, if $\dim(K^+(P^+,P^+)) = \dim(K^-(P^-,P^-)) = e$,
where $e$ is a non-negative integer, in which case we call $e$ the \emph{balanced 2-dimension} of $P$.
Note that by \eqref{eq:Kandef}, the balanced dimension of $P$ (resp.~the balanced $2$-dimension of $P$),
is given, when it is defined, by $d = \dim(\Kan(P)_{\sg 1})$
(resp.~$e = \dim(\Kan(P)_{\sg 2})$)
for $\sg = \pm$.

\begin{lemma}\label{lem:baldim}  If $P$ is finite dimensional and  $P^\op \simeq P$,
then $P$ has balanced dimension and balanced $2$-dimension.
\end{lemma}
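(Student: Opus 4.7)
The plan is to exploit the given isomorphism. Write it as $f = (\psi^-,\psi^+) : P \to P^\op$, so that $\psi^\sigma : P^\sigma \to (P^\op)^\sigma = P^{-\sigma}$ is a linear isomorphism for $\sigma = \pm$; immediately $\dim P^- = \dim P^+$, giving the balanced dimension.

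For balanced $2$-dimension, I would lift $f$ to the Kantor Lie algebra along the lines of the proof of Lemma \ref{lem:chardouble}. Functoriality of the Kantor Lie algebra construction (Subsection \ref{subsec:BC1}) yields a graded isomorphism $\Kan(P) \simgr \Kan(P^\op)$. On the other hand, the uniqueness of $\Kan$ up to graded isomorphism identifies $\Kan(P^\op)$ with $\Kan(P)$ equipped with the reversed $\bbZ$-grading, since that reversed-graded algebra visibly envelops $P^\op$ and retains both the generation condition and the centre-triviality condition. Composing these two identifications produces a grade-reversing automorphism $\omega$ of $\Kan(P)$, forcing $\dim \Kan(P)_i = \dim \Kan(P)_{-i}$ for every $i$. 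Applying \eqref{eq:Kandef} with $i = 2$ then gives $\dim K^-(P^-,P^-) = \dim K^+(P^+,P^+)$, which is the balanced $2$-dimension statement (and $i=1$ recovers balanced dimension).

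There is no serious obstacle: the identification of $\Kan(P^\op)$ with the reversed grading of $\Kan(P)$ is the same move used inside the proof of Lemma \ref{lem:chardouble}, and the rest is bookkeeping via \eqref{eq:Kandef}. A more elementary alternative, bypassing $\Kan$ entirely, would be to derive directly from the compatibility of $\psi$ with the triple products that
\[
\psi^\sigma \circ K^\sigma(x,z) \;=\; K^{-\sigma}(\psi^\sigma x,\psi^\sigma z) \circ \psi^{-\sigma}
\]
for $x,z \in P^\sigma$, and then observe that conjugation $g \mapsto \psi^\sigma \circ g \circ (\psi^{-\sigma})^{-1}$ restricts to a linear isomorphism $K^\sigma(P^\sigma,P^\sigma) \to K^{-\sigma}(P^{-\sigma},P^{-\sigma})$; finite dimensionality then ensures that both dimensions are finite and equal.
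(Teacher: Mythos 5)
Your argument is correct, but your primary route is heavier than what the paper does. The paper's proof is exactly your ``elementary alternative'' in its tersest form: since $(P^\op)^\sg = P^\msg$ and the $\msg$-product of $P^\op$ is the $\sg$-product of $P$, an isomorphism $P \simeq P^\op$ immediately gives $\dim(P^+) = \dim((P^\op)^-) = \dim(P^-)$ and $\dim(K^+(P^+,P^+)) = \dim(K^-((P^\op)^-,(P^\op)^-)) = \dim(K^-(P^-,P^-))$ --- two lines, no reference to $\Kan(P)$ at all. Your intertwining identity $\psi^\sg \circ K^\sg(x,z) = K^\msg(\psi^\sg x,\psi^\sg z)\circ \psi^\msg$ is the explicit computation underlying that second chain of equalities, so your fallback is essentially the paper's proof made explicit. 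Your main route through the Kantor Lie algebra is also valid: functoriality of $\Kan$ and the identification of $\Kan(P^\op)$ with the grade-reversal of $\Kan(P)$ are both stated and used in the paper (the latter inside the proof of Lemma \ref{lem:chardouble}), and \eqref{eq:Kandef} then converts $\dim \Kan(P)_{\sg 2} = \dim\Kan(P)_{\msg 2}$ into the balanced $2$-dimension claim. What the direct argument buys is independence from the $\Kan$ construction and its uniqueness statement; what your $\Kan$-route buys is a uniform statement ($\dim L_i = \dim L_{-i}$ for all $i$) that also packages the balanced-dimension case, at the cost of invoking machinery the lemma does not need.
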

\begin{proof} We have
$\dim(P^+) = \dim((P^\op)^-) = \dim(P^-)$ and
$\dim(K^+(P^+,P^+))= \dim(K^-((P^\op)^-,(P^\op)^-)) = \dim(K^-(P^-,P^-))$.
\end{proof}

Note  that  a Kantor pair $P$ is Jordan if and only $P$ has balanced
$2$-dimension $0$.  With this in mind, we view  balanced $2$-dimension (when it is defined) as a
measure of distance from  Jordan theory.
In this spirit, we make the following definition.

\begin{definition}  A \emph{close-to-Jordan Kantor pair}, or simply a
\emph{close-to-Jordan pair}, is a Kantor pair of balanced  $2$-dimension $1$.
\end{definition}

\subsection{Finite dimensional simple Kantor pairs}\label{subsection:KPsimple}
Suppose in this subsection that \emph{$\bbK$ is an algebraically closed  field of characteristic $0$}.

\begin{proposition} \label{prop:KTS}  A  trilinear pair $P$ is a finite dimensional
simple Kantor pair if and only it is isomorphic to the double of a
finite dimensional non-polarized simple Kantor triple system.  Moreover in that case
$P^\op  \simeq P$, and hence $P$ has balanced dimension and balanced $2$-dimension.
\end{proposition}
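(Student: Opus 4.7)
The plan is to prove both directions by leveraging the Kantor Lie algebra construction together with Proposition \ref{prop:Zngrading}(iii) and Lemmas \ref{lem:double}, \ref{lem:chardouble}, \ref{lem:baldim}.

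For the easy direction (``$\Leftarrow$''): suppose $P$ is isomorphic to the double of a finite dimensional non-polarized simple Kantor triple system $X$. Since a Kantor triple system is by definition a $(-1,1)$-FKTS, its $1$-double, namely the double, is a Kantor pair; so $P$ is a Kantor pair. Simplicity of $P$ then follows directly from Lemma \ref{lem:double} (with $\xi = 1$), and finite dimensionality is immediate. This handles one direction essentially by unwinding definitions.

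For the nontrivial direction (``$\Rightarrow$''): assume $P$ is a finite dimensional simple Kantor pair. First, $\Kan(P)$ is finite dimensional by \eqref{eq:Kandef} and simple by \eqref{eq:Kansimp}, hence it fits the hypotheses of Proposition \ref{prop:Zngrading} (in particular it is semisimple), with the $\bbZ$-grading being the $5$-grading. I would then apply Proposition \ref{prop:Zngrading}(iii) with $G = \bbZ$ to obtain a period $2$ grade reversing automorphism $\omega$ of $\Kan(P)$. Since $\omega^2 = \id_{\Kan(P)}$, in particular $\omega^2|_{P^-+P^+} = \id$, so Lemma \ref{lem:chardouble} (applied with $\xi = 1$) yields that $P$ is isomorphic to the double of the triple system $X$ whose underlying module is $P^-$ and whose product is $\{x,y,z\} = \{x,\omega y,z\}^-$. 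Since this double is isomorphic to the Kantor pair $P$, it is itself a Kantor pair, so by definition $X$ is a $(-1,1)$-FKTS, i.e.~a Kantor triple system. Finally, because the double of $X$ is a simple pair, Lemma \ref{lem:double} forces $X$ to be simple and non-polarized, and $X$ is clearly finite dimensional.

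For the ``Moreover'' assertion: if $P$ is isomorphic to the double of a triple system, then because the $+$-product and the $-$-product on the double coincide, $P^\op$ has the same underlying pair of modules and the same pair of products as $P$; hence $P^\op \simeq P$ (indeed via the identity). Combined with the finite dimensionality of $P$, Lemma \ref{lem:baldim} then delivers balanced dimension and balanced $2$-dimension. The only real obstacle is the forward direction, and more specifically securing a \emph{period $2$} grade reversing automorphism of $\Kan(P)$ (as opposed to an arbitrary one); this is precisely what Proposition \ref{prop:Zngrading}(iii) supplies, so the proof reduces to an application of our earlier structural results with no further calculation required.
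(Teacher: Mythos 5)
Your proof is correct and follows essentially the same route as the paper: both directions rest on Lemma \ref{lem:double}, Proposition \ref{prop:Zngrading}(iii) to produce a period $2$ grade reversing automorphism of $\Kan(P)$, and Lemma \ref{lem:chardouble} with $\xi=1$. The only (inessential) difference is in the ``Moreover'' step: the paper obtains $P^\op\simeq P$ directly from $(\omega|_{P^-},\omega|_{P^+})$, whereas you deduce it from the fact that a double equals its own opposite — both are fine, though your parenthetical ``via the identity'' applies to the double itself rather than to $P$, which is only isomorphic to it.
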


\begin{proof}
By Lemma \ref{lem:double}, we have the implication ``$\Leftarrow$'' in the first statement.
Suppose conversely that $P$ is a finite dimensional simple Kantor pair.
By \eqref{eq:Kansimp} and Proposition \ref{prop:Zngrading}(iii),
we can choose a period 2 grade reversing  $\omega \in \Aut(\Kan(P))$ .
Then
$\omega$ exchanges $P^+$ and $P^-$, so $(\omega \mid_{P^-},\omega \mid_{P^+})$
is an isomorphism of $P$ onto   $P^\op$.  The rest now follows from Lemmas
\ref{lem:chardouble} and \ref{lem:baldim}
\end{proof}

\begin{remark} \label{rem:equivclass}
We see from
Proposition \ref{prop:KTS} that the problem of classifying
finite dimensional simple Kantor pairs up to isomorphism is equivalent to
the problem of classifying finite dimensional non-polarized simple Kantor triple systems  up to isotopy.
\end{remark}

If $P$ is a finite dimensional
simple Kantor pair over $\bbK$, we define the \emph{type} of $P$ to be the
type of the simple Lie algebra~$\Kan(P)$.

\begin{proposition} \label{prop:fdsimp} Suppose $P$ is a nonzero Kantor pair.  If $L$ is any finite dimensional simple
$5$-graded Lie algebra that envelops $P$,  then $P$ is finite dimensional and simple, $L \simgr \Kan(P)$,
the type of $P$ is the same as the type of $L$,
the balanced dimension of $P$ equals  $\dim(L_{\sg 1})$ for $\sg = \pm$, and
the balanced
$2$-dimension of $P$ equals  $\dim(L_{\sg 2})$ for $\sg = \pm$.
\end{proposition}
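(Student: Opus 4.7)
The plan is to unpack the conclusion directly from Proposition~\ref{prop:BC1} together with the identifications in~\eqref{eq:Kandef} of the graded pieces of $\Kan(P)$. Since Proposition~\ref{prop:BC1} imposes no finite-dimensionality hypothesis, I can apply it at once: because $L$ is a simple $5$-graded Lie algebra enveloping the nonzero Kantor pair $P$, the proposition gives immediately that $P$ is simple and that $L \simgr \Kan(P)$.

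I would then transfer finite-dimensionality from $L$ to $P$. Since $L$ is finite dimensional and $\Kan(P) \simgr L$, the Kantor Lie algebra $\Kan(P)$ is finite dimensional; the $\bbK$-module identification $P^\sg \simeq \Kan(P)_{\sg 1}$ from~\eqref{eq:Kandef} then shows each $P^\sg$ is finite dimensional, so $P$ is finite dimensional. This validates the notion of ``type of $P$'' from Subsection~\ref{subsection:KPsimple}: it is by definition the type of $\Kan(P)$, which coincides with the type of $L$ because $\Kan(P) \simgr L$ (a fortiori as ungraded Lie algebras).

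For the dimensional identities, combining~\eqref{eq:Kandef} with $\Kan(P) \simgr L$ yields $\dim(P^\sg) = \dim(\Kan(P)_{\sg 1}) = \dim(L_{\sg 1})$ and $\dim(K^\sg(P^\sg,P^\sg)) = \dim(\Kan(P)_{\sg 2}) = \dim(L_{\sg 2})$ for $\sg = \pm$. To conclude that these are \emph{balanced} dimensions (that is, sign-independent), I would invoke Proposition~\ref{prop:Zngrading}(iii) applied to the $\bbZ$-grading of the semisimple Lie algebra $L$: this produces a period-$2$ grade-reversing automorphism of $L$, whence $\dim(L_{-i}) = \dim(L_{i})$ for every $i$, and in particular for $i = 1,2$.

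The argument contains no substantive obstacle; the proposition is essentially a dictionary translating structural properties of $L$ into corresponding properties of $P$. The closest thing to a subtlety is remembering that the hypotheses of Proposition~\ref{prop:Zngrading}(iii)---$\bbK$ algebraically closed of characteristic $0$ and $L$ semisimple---are in force (the former by the standing assumption of Subsection~\ref{subsection:KPsimple}, the latter because $L$ is simple), and that the type of a finite-dimensional simple Lie algebra over such $\bbK$ depends only on its ungraded isomorphism class.
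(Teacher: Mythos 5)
Your argument is correct and follows essentially the same route as the paper, whose proof simply cites Proposition~\ref{prop:BC1} together with Proposition~\ref{prop:KTS}; the only cosmetic difference is that you inline the relevant content of Proposition~\ref{prop:KTS} by applying Proposition~\ref{prop:Zngrading}(iii) directly to the $5$-grading of $L$ (rather than to $\Kan(P)$, as the paper does) to obtain the period-$2$ grade-reversing automorphism that balances the dimensions. All the hypotheses you check ($\bbK$ algebraically closed of characteristic $0$, $L$ semisimple, finite-dimensionality transferred via \eqref{eq:Kandef}) are exactly the ones needed, so the proof is complete.
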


\begin{proof}  This follows from
Propositions \ref{prop:BC1}  and \ref{prop:KTS}.
\end{proof}

\section{Classification of finite dimensional simple Kantor pairs}
\label{sec:KPfd}

\emph{Suppose  in this section that $\bbK$ is an algebraically closed field of
characteristic $0$ and  that $\Pi$ is the  connected Dynkin diagram of type $\Xn$}.
We will use certain admissible subsets of $\Pi$, or equivalently certain markings of $\Pi$,
to classify the finite dimensional simple Kantor pairs over $\bbK$.

For this purpose, \emph{we let $\cG$ be a finite dimensional simple Lie algebra
over $\bbK$ of type $\Xn$ with Cartan subalgebra $\cH$ and root system
$\Sigma = \Sigma(\cG,\cH)$ relative to $\cH$ (see Subsection \ref{subsec:rootsystem});
and let $\Ps$ be a base for this root system.  Then there exists a diagram
isomorphism $\iota: \Pi \to \Ps$.
For simplicity we
use $\iota$ to identify $\Pi = \Ps$.  }

\emph{Let  $\mu^+$ be the highest root
of $\Sigma$ relative to $\Pi$, $\mu^- := -\mu^+$, and}
\[\tPi :=\Pi\cup\{\mu^-\}.\]
The Dynkin diagram for $\tPi$ (or simply $\tPi$) is called the \emph{extended Dynkin diagram} (or
completed Dynkin graph in  \cite{Bour})
for~$\Pi$.

\emph{We also assume as in Subsection \ref{subsec:BC1} that
\[\De = \De_{\BCone} := \set{-2\al_1,-\al_1,\al_1,2\al_1}\]
is the irreducible root system of type $\BCone$ with base  $\Pd = \set{\al_1}$},
and we identify $Q_\De = \bbZ$ using the $\bbZ$-basis $\Pd$ for $Q_\De$.
Note that $\Aut(\De) = W_\De = \set{\pm 1}$.

\subsection{Kantor-admissible subsets of \protect $\Pi$ \protect}
\label{subsec:Kantoradmit}
To  emphasize the role of $\BCone$ here, we   write  the sets $\Hom(\Sigma,\Delta)$,
$\Hom_\sh(\Sigma,\Delta)$, $\Hom_\pos(\Sigma,\Delta)$, and
$\Hom_\psh(\Sigma,\Delta)$ respectively as
$\Hom(\Sigma,\BCone)$, $\Hom_\sh(\Sigma,\BCone)$, $\Hom_\pos(\Sigma,\BCone)$ and
$\Hom_\psh(\Sigma,\BCone)$.
Then we have
\begin{equation}
\label{eq:HomBC1}
\begin{gathered}
\Hom(\Sigma,\BCone) = \set{\rho\in \Hom(\QS,\bbZ) \suchthat \rho(\Sigma) \subseteq \set{0, \pm 1, \pm 2}} ,\\
\Hom_\sh(\Sigma,\BCone) =  \set{\rho\in \Hom(\Sigma,\BCone) \suchthat 1 \in \rho(\Sigma)}.
\end{gathered}
\end{equation}

If  $S\subseteq \Pi$, we define $\chi_{S}\in\hQo$ by
\begin{equation} \label{eq:chiS}
\chi_{S}(\lm)=\left\{
\begin{tabular}
[c]{ll}%
$1$ & if $\lm\in S$\\
$0$ & if $\lm\in\Pi\sm S$,
\end{tabular}
\right.
\end{equation}
and we set $\chi_{\lm}=\chi_{\{\lm\}}$ for $\lm\in\Pi$.  Note that $\chi_\emptyset = 0$.

A subset $S$ of $\Pi$ is said to be \emph{Kantor-admissible}
if $\chi_S \in \Hom_\sh(\Sigma,\BCone)$ (or equivalently  $\chi_S \in \Hom_\psh(\Sigma,\BCone)$).
Let
\[\KA(\Pi) = \text{the set of all Kantor-admissible subsets of
$\Pi$}.\]

\begin{remark} \label{rem:KAwelldef}  The
set
$\KA(\Pi)$ (or equivalently the term Kantor-admissible) has been defined using
our choice of $\cG$, $\cH$,  $\Ps$ and $\iota : \Pi \to \Ps$.  However, it is easy to see (since Dynkin diagram
isomorphisms induce root system isomorphisms), that a different choice
of these objects leads to the same set  $\KA(\Pi)$.  In short,
\emph{$\KA(\Pi)$ is well-defined}.
\end{remark}

\begin{proposition} \label{prop:Kadmit1}
The map $S \mapsto \chi_S$
is a bijection of  the set $\KA(\Pi)$
onto the set  $\Hom_\psh(\Sigma,\BCone)$.
\end{proposition}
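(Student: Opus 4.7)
The plan is to treat well-definedness, injectivity, and surjectivity separately. Injectivity is immediate, since $S=\{\lambda\in\Pi:\chi_S(\lambda)=1\}$ recovers $S$ from $\chi_S$. For well-definedness I need $\chi_S\in\Hom_\psh(\Sigma,\BCone)$: by definition of $\KA(\Pi)$ we already have $\chi_S\in\Hom_\sh(\Sigma,\BCone)$, and positivity is automatic because $\chi_S$ is non-negative on $\Pi$, so $\chi_S(\Sigma^+)\subseteq\bbZ_{\ge 0}$, which combined with \eqref{eq:HomBC1} lands in $\{0,1,2\}\subseteq\Delta^+\cup\{0\}$.

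The heart of the argument is surjectivity. Given $\rho\in\Hom_\psh(\Sigma,\BCone)$, every $\lambda\in\Pi\subseteq\Sigma^+$ satisfies $\rho(\lambda)\in\{0,1,2\}$ by \eqref{eq:HomBC1} together with positivity. I will show that in fact $\rho(\lambda)\in\{0,1\}$ for every $\lambda\in\Pi$; once this is established, setting $S:=\{\lambda\in\Pi:\rho(\lambda)=1\}$ produces $\chi_S=\rho$ on the $\bbZ$-basis $\Pi$ of $\QS$ and therefore as homomorphisms on all of $\QS$, so $\rho=\chi_S$, and $S\in\KA(\Pi)$ because $\chi_S=\rho\in\Hom_\sh(\Sigma,\BCone)$.

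The main obstacle will be ruling out the value $2$ on a simple root. Suppose for contradiction that $\rho(\lambda)=2$ for some $\lambda\in\Pi$. I will use the highest root $\mu^+=\sum_{\alpha\in\Pi}n_\alpha\alpha$, whose coefficients are all at least $1$ since $\Sigma$ is irreducible. Non-negativity of $\rho$ on simple roots gives $\rho(\mu^+)\ge 2n_\lambda\ge 2$, while \eqref{eq:HomBC1} forces $\rho(\mu^+)\le 2$; equality throughout demands $n_\lambda=1$ and $\rho(\alpha)=0$ for all $\alpha\ne\lambda$. But then every positive root $\nu=\sum m_\alpha\alpha$ satisfies $\rho(\nu)=2m_\lambda\in 2\bbZ$, so $\rho(\Sigma)\subseteq 2\bbZ$, contradicting $1\in\rho(\Sigma)$ from the shortness condition in \eqref{eq:HomBC1}. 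This exclusion is really the only delicate step; everything else is straightforward bookkeeping.
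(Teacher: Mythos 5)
Your proof is correct, and the well-definedness and injectivity parts match what the paper dismisses as ``clear.'' The surjectivity argument, however, pivots on a different root and a different mechanism. The paper evaluates $\rho$ at the root $\sum_{\lm\in\Pi}\lm$ (a root because $\Sigma$ is irreducible), first deduces from $1\in\rho(\Sigma^+)$ that $1\in\rho(\Pi)$, and then concludes from $\sum_{\lm\in\Pi}\rho(\lm)\le 2$ that every value on $\Pi$ is $0$ or $1$ --- a direct counting argument. You instead evaluate at the highest root $\mu^+$, use the fact that all of its coefficients are positive to force $\rho$ to vanish on $\Pi\sm\set{\lm}$ whenever $\rho(\lm)=2$, and then get a parity contradiction with $1\in\rho(\Sigma)$. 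Your route avoids having to establish $1\in\rho(\Pi)$ separately (the small argument that a positive root with $\rho$-value $1$ must have a simple root of $\rho$-value $1$ in its support), at the cost of invoking the full-support property of $\mu^+$ and running a proof by contradiction; the paper's route is marginally more economical since the single inequality $0\le\sum_{\lm\in\Pi}\rho(\lm)\le 2$ together with one term equal to $1$ settles everything at once. Both are complete proofs of the same length, resting on the same two ingredients: positivity of $\rho$ on $\Pi$ and the existence of a positive root supported on all of $\Pi$.
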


\begin{proof} All but surjectivity is clear.  For surjectivity, suppose
$\rho\in\Hom_\psh(\Sigma,\BCone)$.  Then  $1\in \rho(\Sigma^+)$,  so $1\in \rho(\Pi)$.
Note also that
$\sum_{\lm\in \Pi} \lm \in \Sigma^+$, since $\Sigma$ is irreducible.
So $0\le \sum_{\lm\in \Pi} \rho(\lm) \le 2$.  At least one term in this
sum is $1$, so all are $0$ or $1$.  Let $S = \set{\lm\in \Pi \suchthat \rho(\lm) = 1}$.
Then $\rho$ and $\chi_S$ agree on $\Pi$, so $\rho = \chi_S$.  Finally
$S \in \KA(\Pi)$ by definition.
\end{proof}

We have the following characterization of Kantor-admissible subsets of $\Pi$.

\begin{proposition} \label{prop:Kadmit2}If   $S$ is a subset of $\Pi$, then the following are equivalent:
\begin{itemize}
  \item [(a)] $S\in \KA(\Pi)$.
  \item [(b)] $S \ne \emptyset$ and $\chi_S\in \Hom(\Sigma,\BCone)$.
  \item [(c)] $\chi_S(\mu^+) \in \set{1,2}$.
  \item [(d)]  $S = \set{\lm}$ with $\chi_\lm(\mu^+)\in \set{1,2}$; or $S = \set{\lm,\lm'}$
  with   $\lm\ne \lm'$ and $\chi_{\lm}(\mu^+) = \chi_{\lm'}(\mu^+) =1$.
\end{itemize}
\end{proposition}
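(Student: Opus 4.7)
The plan is to prove the cycle of implications (a) $\Rightarrow$ (b) $\Rightarrow$ (c) $\Rightarrow$ (d) $\Rightarrow$ (a). The single ingredient beyond unwinding the definitions is a standard property of the highest root in an irreducible finite root system: writing $\mu^+ = \sum_{\lambda \in \Pi} m_\lambda \lambda$, one has $m_\lambda \ge 1$ for every $\lambda \in \Pi$, and $\mu^+ - \alpha$ is a non-negative integer combination of simple roots for every $\alpha \in \Sigma^+$ (both facts are in \cite[VI, \S 1.8]{Bour}). Since $\chi_S$ has values in $\{0,1\}$ on $\Pi$, this at once gives the dominance inequality $0 \le \chi_S(\alpha) \le \chi_S(\mu^+)$ for all $\alpha \in \Sigma^+$, which drives almost the whole argument.

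For (a) $\Rightarrow$ (b), the definition yields $\chi_S \in \Hom(\Sigma, \BCone)$, and the condition $1 \in \chi_S(\Sigma)$ forces $\chi_S \neq 0$, hence $S \neq \emptyset$. For (b) $\Rightarrow$ (c), choose any $\lambda_0 \in S$; dominance gives $\chi_S(\mu^+) \ge \chi_S(\lambda_0) = 1$, while $\chi_S(\mu^+) \in \{0,\pm 1,\pm 2\}$ by hypothesis, so $\chi_S(\mu^+) \in \{1,2\}$. For (c) $\Rightarrow$ (d), the computation $\chi_S(\mu^+) = \sum_{\lambda \in S} m_\lambda$ combined with $m_\lambda \ge 1$ for each $\lambda \in \Pi$ forces either $|S|=1$ with $m_\lambda = \chi_\lambda(\mu^+) \in \{1,2\}$, or $|S|=2$ with both coefficients equal to $1$.

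For (d) $\Rightarrow$ (a), in either listed case we have $\chi_S(\mu^+) \in \{1,2\}$, and the dominance inequality then yields $\chi_S(\alpha) \in \{0,1,2\}$ for all $\alpha \in \Sigma^+$, hence $\chi_S(\Sigma) \subseteq \{0,\pm 1,\pm 2\}$, so $\chi_S \in \Hom(\Sigma,\BCone)$. Since $S$ is nonempty, some $\lambda \in S$ gives $\chi_S(\lambda) = 1$, so $\chi_S \in \Hom_\sh(\Sigma,\BCone)$; and positivity is automatic because $\chi_S \ge 0$ on $\Pi$. Therefore $\chi_S \in \Hom_\psh(\Sigma,\BCone)$, i.e.\ $S \in \KA(\Pi)$.

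There is no real obstacle here; the substantive input is the dominance of $\mu^+$ and the positivity of all its simple-root coefficients, which are standard facts about irreducible finite root systems. The proof is otherwise a direct bookkeeping exercise from the definition of $\chi_S$ and the descriptions of $\Hom(\Sigma,\BCone)$ and $\Hom_\sh(\Sigma,\BCone)$ in \eqref{eq:HomBC1}.
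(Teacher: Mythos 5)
Your proof is correct and follows the same route the paper intends: the paper's own proof is the one-line remark that ``the implications in cyclic order are clear using \eqref{eq:HomBC1},'' and your argument simply supplies the details of that cycle, with the dominance of $\mu^+$ (all simple-root coefficients $\ge 1$, and $\mu^+ - \alpha$ a non-negative combination of simple roots for $\alpha\in\Sigma^+$) as the only substantive input. Nothing to correct.
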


\begin{proof} The implications in cyclic order are clear using \eqref{eq:HomBC1}.
\end{proof}

We define a right action
of $\Aut(\Pi)$ on the set of subsets of $\Pi$ by
\[S \cdot \ph := \ph^{-1}(S).\]
One checks that
\begin{equation} \label{eq:philmS}
\chi_{S}\cdot \ph=\chi_{S\cdot \ph}.
\end{equation}
for  $S\subseteq\Pi$ and  $\ph\in \Aut(\Pi)$.
Hence the right action of $\Aut(\Pi)$ stabilizes the set
$\KA(\Pi)$, so \emph{we have a right action $\cdot$ of
$\Aut(\Pi)$ on $\KA(\Pi)$}.

\subsection{Classification}
\label{subsec:classKP}  We  next use Kantor's approach for constructing Kantor triple
systems \cite{K1} to construct Kantor pairs.

\begin{construction} \label{con:Kantor} (The Kantor pair $\fP(\Pi;S)$)
Let $S \in \KA(\Pi)$.
Then $\chi_S\in \Hom_\sh(\Sigma,\BCone)$, so
$\cG(\chi_S)$ is a $5$-graded Lie algebra with  $\cG(\chi_S)_{-1} \ne 0$ or
$ \cG(\chi_S)_1 \ne 0$ by Lemma \ref{lem:rootgrG}.
Let $\fP(\Pi;S)$
be the Kantor pair enveloped by  $\cG(\chi_S)$ (see Section \ref{subsec:BC1}).
Then $\fP(\Pi;S) = (\fP(\Pi;S)^-,\fP(\Pi;S)^+)$ is nonzero and  given explicitly  by
\[\textstyle \fP(\Pi;S)^\sg = \cG(\chi_S)_{\sg 1} = \sum_{\mu \in \Sigma,\ \chi_S(\mu) = \sg 1}
\cG_\mu \]
for $\sg = \pm$ with products  given by
$\{x,y,z\}^\sg = [[x,y],z]$ in $\cG$.
\end{construction}

\begin{remark} \label{rem:PSwelldef} The  definition of $\fP(\Pi;S)$  uses our
choice of $\cG$, $\cH$,  $\Ps$ and $\iota : \Pi \to \Ps$.
It is easy to see (since Dynkin diagram
isomorphisms induce Lie algebra isomorphisms) that a different choice
of these objects leads to a Kantor pair that is isomorphic to  $\fP(\Pi;S)$.
That is to say, \emph{$\fP(\Pi;S)$ is well-defined up to isomorphism}.
\end{remark}

Since  $\Pi$ is fixed in our discussion,
we will usually  abbreviate our notation and \emph{write $\fP(\Pi;S)$ simply as $\fP(S)$}

\begin{remark} \label{rem:fPS} Let $S\in\KA(\Pi)$.

(i) By  Proposition \ref{prop:fdsimp}, $\fP(S)$  is a finite dimensional simple Kantor pair
of type $\Xn$
 and
$\cG(\chi_S) \simgr\Kan(\fP(S))$.

(ii)  Again using Proposition \ref{prop:fdsimp}, we see that the balanced dimension
and the balanced $2$-dimension of $\fP(S)$ equal respectively
$\dim(\cG(\chi_S)_1)$ and $\dim(\cG(\chi_S)_2)$.  Hence these
quantities are given respectively by
\[|\set{\mu\in \Sigma : \chi_S(\mu) = 1}|  \andd
|\set{\mu\in \Sigma : \chi_S(\mu) = 2}|. \]

(iii) $\fP(S)$ is Jordan if and only if
it has balanced $2$-dimension $0$,  which by (ii) holds if and only if $\chi_S(\mu^+) = 1$.
\end{remark}

We  now state a classification theorem for finite dimensional simple Kantor pairs in terms of Kantor admissible
subsets of $\Pi$.

\begin{theorem}[Kantor] \label{thm:Kantor}
Suppose  $\Pi$ is the  connected Dynkin diagram of   type~$\Xn$.
\begin{itemize}
\item [(i)]  If  $S\in\KA(\Pi)$, then
$\fP(S)$ is a   finite dimensional simple
Kantor pair of type~$\type{X}{\rkn}$.
\item[(ii)]  If $P$ is a  finite dimensional  simple Kantor pair of type
$\type{X}{\rkn}$, then $P$ is isomorphic to
$\fP(S)$ for some $S\in \KA(\Pi)$.
\item[(iii)]  If $S$ and $S'$ are  in $\KA(\Pi)$, then
$\fP(S)$ and  $\fP(S')$ are
isomorphic if and only $S$ and $S'$ are in the same orbit in $\KA(\Pi)$ under  the right action of $\Aut(\Pi)$.
\end{itemize}
\end{theorem}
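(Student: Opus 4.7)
The plan is to reduce all three parts to Theorem \ref{thm:rootgrG} applied with $\Delta = \De_{\BCone}$, using the dictionary $P \leftrightarrow \Kan(P)$ to rephrase everything in terms of $\BCone$-gradings of the fixed simple Lie algebra $\cG$.

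Part (i) is essentially recorded already in Remark \ref{rem:fPS}(i): Lemma \ref{lem:rootgrG} shows that $\cG(\chi_S)$ is a $\BCone$-grading of $\cG$ with $\cG(\chi_S)_{\pm 1} \ne 0$, since $S \in \KA(\Pi)$ means $\chi_S \in \Hom_\psh(\Sigma,\BCone)$. Hence $\cG(\chi_S)$ is a finite dimensional simple $5$-graded Lie algebra of type $\Xn$, and Proposition \ref{prop:fdsimp} applied to its enveloping Kantor pair $\fP(S)$ immediately yields simplicity, finite dimensionality, and type $\Xn$, together with $\cG(\chi_S) \simgr \Kan(\fP(S))$.

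For part (ii), I would start with a finite dimensional simple Kantor pair $P$ of type $\Xn$. By definition of type, $\Kan(P)$ is a finite dimensional simple Lie algebra of type $\Xn$, hence isomorphic to $\cG$ as an ungraded algebra; transporting the $5$-grading of $\Kan(P)$ through such an isomorphism equips $\cG$ with a $\BCone$-grading. Theorem \ref{thm:rootgrG}(i) then supplies some $\rho \in \Hom_\pos(\Sigma,\BCone)$ with this grading graded-isomorphic to $\cG(\rho)$. The key check is that $\rho$ actually lies in $\Hom_\psh(\Sigma,\BCone)$: since $P$ is simple, Proposition \ref{prop:KTS} gives $P^\op \simeq P$ and in particular $P^\pm \ne 0$, so $\Kan(P)_{\pm 1} \ne 0$, forcing $\cG(\rho)_1 \ne 0$ and hence $1 \in \rho(\Sigma)$. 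Proposition \ref{prop:Kadmit1} then produces a (unique) $S \in \KA(\Pi)$ with $\rho = \chi_S$, and the chain $\Kan(P) \simgr \cG(\chi_S) \simgr \Kan(\fP(S))$ delivers $P \simeq \fP(S)$, since graded-isomorphic Kantor Lie algebras envelope isomorphic Kantor pairs.

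For part (iii) both directions will flow from Theorem \ref{thm:rootgrG}(ii). For ``$\Leftarrow$'', if $S' = S \cdot \ph$ with $\ph \in \Aut(\Pi) = \Aut(\Ps)$, then \eqref{eq:philmS} gives $\chi_{S'} = \chi_S \cdot \ph$, so Theorem \ref{thm:rootgrG}(ii) produces a graded isomorphism $\cG(\chi_S) \simgr \cG(\chi_{S'})$, whence $\fP(S) \simeq \fP(S')$ by passing to enveloping Kantor pairs. For ``$\Rightarrow$'', I would lift an isomorphism $\fP(S) \simeq \fP(S')$ to a graded isomorphism $\Kan(\fP(S)) \simgr \Kan(\fP(S'))$, which via Remark \ref{rem:fPS}(i) gives $\cG(\chi_S) \simgr \cG(\chi_{S'})$; Theorem \ref{thm:rootgrG}(ii) then extracts some $\ph \in \Aut(\Ps) = \Aut(\Pi)$ with $\chi_{S'} = \chi_S \cdot \ph = \chi_{S \cdot \ph}$, and injectivity of the map $S \mapsto \chi_S$ (Proposition \ref{prop:Kadmit1}) forces $S' = S \cdot \ph$. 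The one delicate point anywhere in the argument is upgrading the positive homomorphism produced by Theorem \ref{thm:rootgrG}(i) in part (ii) to a \emph{positive short} one, which is exactly where the nonvanishing of both $P^-$ and $P^+$ for a simple $P$ does its work.
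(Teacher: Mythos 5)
Your proposal is correct and follows exactly the route the paper indicates: the paper states that (ii) and (iii) ``follow easily using Theorem \ref{thm:rootgrG}, Proposition \ref{prop:fdsimp} and Proposition \ref{prop:Kadmit1}'' and then defers the details to Theorem \ref{thm:SP}, whose proof is precisely your argument carried out in the $\BCtwo$ setting and specialized by taking $T=\emptyset$ (Remark \ref{rem:ungradedproof}). Your handling of the one delicate point --- upgrading the positive homomorphism to a positive short one via the nonvanishing of the short root spaces of $\Kan(P)$ --- matches the corresponding step in the proof of Theorem \ref{thm:SP}(ii).
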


We  have seen (i) above; and (ii) and (iii) follow easily
using Theorem \ref{thm:rootgrG}, Proposition \ref{prop:fdsimp} and
Proposition \ref{prop:Kadmit1}.
We omit the details since we will obtain the  theorem as a special case of our classification of finite dimensional simple SP-graded Kantor  pairs (see Theorem \ref{thm:SP} and Remark \ref{rem:ungradedproof}).

\begin{remark} \label{rem:Kantor} We have attributed Theorem \ref{thm:Kantor}
to Isai Kantor, since in \cite[\S 4]{K1}  he proved  the  equivalent
classification result (see Remark \ref{rem:equivclass}) for finite dimensional
non-polarized simple Kantor triple systems up to isotopy,
although he omitted some details both in his statements and proofs.
We note that Kantor also gave models of each of the Kantor triple systems that he considered (\cite[\S 5--6]{K1}, \cite{K2}).
We will not look at these models in general, but rather focus on models for the reflections of simple close-to-Jordan pairs later in   Section~\ref{sec:close}.
\end{remark}

\subsection{Marked Dynkin diagrams for simple Kantor pairs}
\emph{We represent $S\in \KA(\Pi)$
by the Dynkin diagram for  $\Pi$ with the nodes in $S$ marked with a circle;
and we use
the same marked Dynkin diagram to represent the Kantor pair $\fP(S)$}.

\begin{example}(Type $\type{E}{6}$).  \label{ex:E6KP}
Suppose that $\Pi = \set{\mu_1,\dots,\mu_6}$ is of type $\type{E}{6}$ with Dynkin diagram
\[\Esixlabelled\]
Now   $\mu^+ = \mu_1 + 2\mu_2 + 3\mu_3 + 2\mu_4 + \mu_5 + 2\mu_6$,
and  the extended Dynkin diagram $\tPi$~is
\begin{equation} \label{eq:E6extended}
\Esixextended
\end{equation}
\cite[VI, \S4.12, (IV)]{Bour}.
So  by Proposition \ref{prop:Kadmit2})(d) there are,
up to the right action of $\Aut(\Pi)$, four Kantor-admissible subsets $S$ of $\Pi$:
\[(i): \set{\mu_1},\qquad (ii): \set{\mu_6},\qquad  (iii):\set{\mu_2},\qquad (iv): \set{\mu_1,\mu_5}.\]
These are represented respectively by   the following marked diagrams
\medskip
\begin{alignat*}{10}
\allowdisplaybreaks
(i)&: &&\ \type{E}{6}(16,0)\ &&  \clearlabels \Satrue \Esix  \qquad
           &(ii)&: &&\ \type{E}{6}(20,1)\ &&  \clearlabels \Sftrue \Esix \\[-1ex]
(iii)&: &&\ \type{E}{6}(20,5)\  && \clearlabels \Sbtrue \Esix \qquad
            &(iv)&: &&\ \type{E}{6}(16,8)\ &&\clearlabels \Satrue \Setrue \Esix .
\end{alignat*}
So by Theorem \ref{thm:Kantor} there are four  simple Kantor pairs of type $\type{E}{6}$ up to isomorphism,
which are represented by these marked diagrams.

For each $S$ above, we have labelled the marked diagram representing $S$ and $\fP(S)$ by $\type{E}{6}(d,e)$, where
$d$ is the balanced dimension of $\fP(S)$ and $e$ is the balanced $2$-dimension of $\fP(S)$.  ($d$ and $e$ were computed using
Remark \ref{rem:fPS}(ii)  and a list of roots in $\Sigma$
\cite[Plate V]{Bour}.)
\emph{We will sometimes also use the label
$\type{E}{6}(d,e)$ for the Kantor pair $\fP(S)$ itself.}

Note   that the unique pair in the list that is Jordan is  $\type{E}{6}(16,0)$,
and the unique pair in the list that is close-to-Jordan
is $\type{E}{6}(20,1)$.
\end{example}

For each of the other types $\type{X}{\rkn}$, it is easy using the
same method to write down the marked Dynkin diagrams representing
the simple Kantor pairs of type $\type{X}{\rkn}$ up to isomorphism.

\subsection{The close-to-Jordan case} \label{subsec:classclose}
We  next consider the classification in the special case of simple close-to-Jordan pairs
of type $\Xn$.  By Theorem  \ref{thm:Kantor}, there is only one  simple finite dimensional
Kantor pair of type $\type{A}{1}$, and it is Jordan so not close-to-Jordan. For the other types, we have the following:

\begin{theorem}[Kantor and Skopec]
\label{thm:KanSko}  Suppose that $X_\rkn \ne A_1$, and let $S$ be the set of nodes of $\Pi$ that are adjacent to $\mu^-$ in $\tPi$.
\begin{itemize}
\item[(i)]  $\chi_S(\mu) = \langle \mu, \mu^+ \rangle$
for $\mu \in Q_\Sigma$; so $\chi_S(\mu^+) = 2$.
\item[(ii)] $S$ is the unique element of $\KA(\Pi)$ such that $\fP(S)$ is close-to-Jordan.
\item[(iii)] $\fP(S)$  is, up to isomorphism,
the unique  finite dimensional  simple close-to-Jordan pair of type $\type{X}{\rkn}$.
\end{itemize}
\end{theorem}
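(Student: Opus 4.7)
My plan is to derive all three parts from one core identity in (i), together with the highest root property and a single root-string computation. For (i), both $\chi_S$ and $\langle \cdot\,, \mu^+ \rangle$ are $\bbZ$-linear on $\QS$, so it is enough to check agreement on $\Pi$. For $\lambda \in \Pi$, $\lambda \in S$ iff $\lambda$ is adjacent to $\mu^-$ in $\tPi$ iff $\langle \lambda, \mu^+ \rangle \ne 0$. Maximality of $\mu^+$ forces $\langle \lambda, \mu^+ \rangle \ge 0$ (else $\mu^+ + \lambda$ would be a root); the hypothesis $\Xn \ne \type{A}{1}$ keeps $\lambda \ne \mu^+$; and since $\mu^+$ is long, the only remaining possibility is $\langle \lambda, \mu^+ \rangle \in \set{0,1}$. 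Hence $\chi_S(\lambda) = \langle \lambda, \mu^+ \rangle$, and evaluation at $\mu^+$ gives $\chi_S(\mu^+) = 2$.

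For existence in (ii), Remark \ref{rem:fPS}(ii) tells me the balanced $2$-dimension of $\fP(S)$ equals $|\set{\mu \in \Sigma \suchthat \chi_S(\mu) = 2}|$. By (i), $\chi_S(\mu) = 2$ reads $\langle \mu, \mu^+ \rangle = 2$; Cauchy--Schwarz together with $|\mu| \le |\mu^+|$ pins this set at $\set{\mu^+}$, so the balanced $2$-dimension is $1$ and $\fP(S)$ is close-to-Jordan. For uniqueness, let $S' \in \KA(\Pi)$ with $\fP(S')$ close-to-Jordan. Some $\mu \in \Sigma$ has $\chi_{S'}(\mu) = 2$, and because $\mu^+ - \mu$ is a non-negative integer combination of simple roots on which $\chi_{S'}$ is non-negative, $\chi_{S'}(\mu^+) \ge 2$; Kantor-admissibility yields the reverse bound, so $\chi_{S'}(\mu^+) = 2$.

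The main obstacle will be promoting $\chi_{S'}(\mu^+) = 2$ to $S = S'$, and I expect to handle it with the $\lambda$-root string through $\mu^+$. Since $\mu^+ + \lambda \notin \Sigma$, the string descends exactly $\langle \mu^+, \lambda \rangle$ steps from $\mu^+$, so $\mu^+ - \lambda \in \Sigma$ precisely when $\lambda \in S$. Any $\lambda \in S \sm S'$ would then yield a second root with $\chi_{S'}$-value $\chi_{S'}(\mu^+) - 0 = 2$, contradicting balanced $2$-dimension $1$; hence $S \subseteq S'$. Writing $\mu^+ = \sum_\lambda n_\lambda \lambda$, the equality $\sum_{\lambda \in S'} n_\lambda = 2 = \sum_{\lambda \in S} n_\lambda$ together with each $n_\lambda \ge 1$ (irreducibility of $\Sigma$) forces $S' = S$. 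Part (iii) then falls out of Theorem \ref{thm:Kantor}: every finite-dimensional simple Kantor pair of type $\Xn$ is isomorphic to $\fP(S'')$ for some $S'' \in \KA(\Pi)$, and the close-to-Jordan hypothesis, via (ii), isolates $S'' = S$.
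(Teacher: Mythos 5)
Your proof is correct, and parts (i), (iii), and the uniqueness half of (ii) follow essentially the same route as the paper: check the identity in (i) on simple roots using that $\mu^+$ is long, read off the balanced $2$-dimension from Remark \ref{rem:fPS}(ii), use the $\lambda$-string through $\mu^+$ to force $S\subseteq S'$, and compare coefficients of $\mu^+$ to rule out a proper inclusion. The one genuine divergence is the existence half of (ii): the paper never invokes part (i) there, instead arguing combinatorially that any $\mu\in\Sigma^+\setminus\set{\mu^+}$ satisfies $\mu\le\mu^+-\kappa$ for some $\kappa\in S$ with $\mu^+-\kappa\in\Sigma$, whence $\chi_S(\mu)\le 1$; you instead convert $\chi_S(\mu)=2$ into $(\mu,\mu^+)=(\mu^+,\mu^+)$ via (i) and apply Cauchy--Schwarz together with the fact that the highest root is long. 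Your version is slightly slicker and handles positive and negative roots uniformly, at the cost of leaning on the Euclidean structure rather than the purely order-theoretic descent the paper uses; both are complete.
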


\begin{proof} Although this result is not stated in this form
in \cite{KS}, it is implicit in the statement and proof of Theorem 4 in \cite{KS}.  For the reader's convenience we give a different proof.

(i): We can assume that $\mu \in \Pi$.  So $\mu\notin \bbQ \mu^+$ (since $X_\rkn \ne A_1$).  Hence
$\langle \mu,\mu^+\rangle \in \set{0,1}$  by \cite[VI, \S1.8, Prop.~25(iv)]{Bour}; whereas $\chi_S(\mu) \in \set{0,1}$ by definition of $\chi_S$.  Finally
$\langle \mu,\mu^+\rangle \ne 0$ iff $\mu \in S$ iff $\chi_S(\mu) \ne 0$.

(ii):  If $S'\in \KA(\Pi)$, recall that by Remark \ref{rem:fPS}(ii),
$\fP(S')$ is  close-to-Jordan  if and only if
$\set{\mu\in \Sigma \suchthat \chi_{S'}(\mu) = 2} \ = \  \set{\mu^+}$.

Now $S\in \KA(\Pi)$ by  Proposition \ref{prop:Kadmit2}(c).
To see that $\fP(S)$ is  close-to-Jordan, it is enough to show that
$\chi_S(\mu) \ne 2$ for $\mu \in  \Sigma^+\setminus\set{\mu^+}$. But if
$\mu \in  \Sigma^+\setminus\set{\mu^+}$, there exists $\kappa\in \Pi$ such that  $\mu \in \mu^+ -\kappa - \sum_{\lm\in \Pi}\bbZ_{\ge 0} \lm$
and $\mu^+ - \kappa\in\Sigma$.  Thus, $\kappa  \in S$, so $\chi_S(\mu) \le 2 - 1 = 1$.

Finally, suppose that $S'\in \KA(\Pi)$ and
$\fP(S')$ is  close-to-Jordan.  So $\chi_{S'}(\mu^+) = 2$.
If there exists $\kappa \in S$  such that $\kappa\notin S'$, then $\mu^+ - \kappa \in \Sigma^+\setminus \set{\mu^+}$ and  $\chi_{S'}(\mu^+ -\kappa) = 2$, a contraction.  So $S\subseteq S'$. Moreover, this inclusion is not proper, since otherwise we would
have $2 = \chi_S(\mu^+) < \chi_{S'}(\mu^+)$.   So $S' = S$

(iii) follows from (ii) and Theorem \ref{thm:Kantor}.
\end{proof}

Example  \ref{ex:E6KP} provides an illustration of Theorem \ref{thm:KanSko}(iii)
in type $\type{E}{6}$.

\begin{remark} \label{rem:mu+} Suppose we have the assumptions of Theorem \ref{thm:KanSko}.

(i)\hphantom{)} One sees checking types case-by-case that $\card(S) = 1$ if  $\Xn \ne \type{A}{n}$.

(ii)  The $5$-grading of $\cG(\chi_S)$ induces a $5$-grading of the set $\Sigma\cup\set{0}$
(which is viewed as a root system in \cite{LN}).   Since $\chi_S(\lambda) = \langle \lambda, \mu^+ \rangle$
for $\lm \in Q_\Sigma$, one sees that this $5$-grading of $\Sigma\cup\set{0}$ is the one described
previously in \cite[\S17.10]{LN}.  We won't need this fact, so we omit the details.
\end{remark}

\subsection{Close-to-Jordan pairs and  Freudenthal-Kantor triple systems} \label{subsec:FTS}
Suppose  in this subsection that
\emph{$S\in \KA(\Pi)$ and $P = \fP(S)$ is close-to-Jordan}.
Recall by Theorem \ref{thm:KanSko}(ii) that $X_\rkn \ne A_1$ and $S$ is the set of nodes of $\Pi$ that are adjacent to
$\mu^-$ in $\tPi$.

Choose  nonzero $e^\sg \in \cG_{\mu^\sg}$ for $\sg = \pm$  such that
\[[h^+,e^\sg] = \sg 2 e^\sg, \text{ where } h^+ = [ e^+,e^-].\]
Then $\cS := {\bbK} e^-\oplus {\bbK} h^+ \oplus e^+ \simeq \spl_2({\bbK})$.
Also, since $P$ is close-to-Jordan, we have
\begin{equation} \label{eq:esg}
\cG(\chi_S)_{\sg 2} = \bbK e^\sg
\end{equation}
for $\sg = \pm$.
We set
\begin{equation*} \label{eq:omega1}
\omega := \exp(\ad e^+) \exp(-\ad e^-) \exp(\ad e^+) \in \Aut(\cG),
\end{equation*}
in which case
\begin{equation}  \label{eq:omega2}
\omega(e^\sg) = -e^\msg,\, \sg = \pm, \andd \omega(h^+)= - h^+.
\end{equation}

We now use $\spl_2$-theory to verify some  properties of the triple
$(e^-,h^+,e^+)$ and the automorphism $\omega$.

\begin{lemma} \label{lem:omega} We have
\begin{itemize}
\item[(i)] $\mu(h^+) = \langle \mu, \mu^+ \rangle= \chi_S(\mu)$ for $\mu \in Q_\Sigma$.
\item[(ii)] $\cG(\chi_S)_i = \set{ g\in \cG \suchthat [h^+,g] = ig}$ for $i\in \bbZ$.
 \item[(iii)]  $\omega$ reverses the $5$-grading of $\cG(\chi_S)$, and hence
$\omega(P^\sg) = P^\msg$ for $\sg = \pm$.
\item[(iv)] $\omega^2\mid_{P^-+P^+} = - \id_{P^- + P^+}$.
\item[(v)] $\omega(x) = - \sg [e^\msg, x]$ for $x\in P^\sg$, $\sg =  \pm$.
\item[(vi)] If $\sg = \pm$, there  exists a unique bilinear form $\zeta^\sg: P^\sg \times P^\msg \to \bbK$
such that
\begin{equation} \label{eq:defzeta1} [[x,a],e^\sg] = \zeta^\sg(x,a)e^\sg
\end{equation}
for $x\in P^\sg$, $a\in P^\msg$. Further
\begin{gather}
\label{eq:defzeta2} [x,\omega a] = -\sg \zeta^\sg(x,a)e^\sg, \\
\label{eq:zetasym}
\zeta^\sg(x,a) = \zeta^\msg(a,x)
\end{gather}
for $x\in P^\sg$, $a\in P^\msg$; and
$\zeta^\sg$ is nondegenerate.
\item[(vii)]  If  $x,y\in P^\sg$ and $a \in P^\msg$, then
\[[[\omega x,a],y] = \zeta^\sg(x,a)\omega y \andd
\omega \big([[x,a],y]\big)  = \zeta^\sg(x,a)\omega y + [[x,a], \omega y].\]
\item[(viii)] If $x,y,z\in P^\sg$, then    $[[x,\omega y],z] - [[z,\omega y],x]  =  -\zeta^\sg(x,\omega z) y.$
\end{itemize}
\end{lemma}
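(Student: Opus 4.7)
The plan is to systematically exploit the $\spl_2$-subalgebra $\cS$ and its adjoint action on $\cG$, combined with weight considerations for the $\chi_S$-grading.

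For (i), the normalization $[h^+, e^\sg] = \sg 2 e^\sg$ with $e^\sg \in \cG_{\sg\mu^+}$ identifies $h^+$ with the coroot of $\mu^+$, so $\mu(h^+) = \langle \mu, \mu^+\rangle$; Theorem \ref{thm:KanSko}(i) then supplies the second equality. Part (ii) is immediate from the formula $\cG(\chi_S)_i = \bigoplus_{\chi_S(\mu) = i} \cG_\mu$ combined with (i). For (iii) and (iv), I view $\cG$ as an $\cS$-module under $\ad$; by (ii) the possible $\ad h^+$-weights are contained in $\{-2, -1, 0, 1, 2\}$, so only irreducible $\cS$-summands of highest weight at most $2$ appear. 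Since $\omega$ is the standard representative of the nontrivial element of the $\cS$-Weyl group, on any $\cS$-module it sends the $i$-weight space to the $-i$-weight space, yielding (iii). For (iv), I invoke the standard identity $\omega^2 = (-1)^d \id$ on the irreducible $\cS$-module of highest weight $d$; the subspace $P^- + P^+$ is the $\pm 1$-weight space, which lies entirely in the sum of highest-weight-$1$ summands, where $\omega^2 = -\id$. Part (v) is a direct expansion of $\omega(x) = \exp(\ad e^+)\exp(-\ad e^-)\exp(\ad e^+)(x)$, using $\cG(\chi_S)_k = 0$ for $|k| \ge 3$ to truncate all exponential series after one or two terms.

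For (vi), well-definedness of $\zeta^\sg$ follows from $[[x, a], e^\sg] \in \cG(\chi_S)_{2\sg} = \bbK e^\sg$, and \eqref{eq:defzeta2} is a short Jacobi calculation using (v) together with $[x, e^\sg] \in \cG(\chi_S)_{3\sg} = 0$. The heart of (vi) is the symmetry \eqref{eq:zetasym} and nondegeneracy. For the symmetry, set $h = [x, a]$; weight considerations in the $\chi_S$-grading force $[h^+, h] = 0$, so expanding $0 = [h, [e^+, e^-]]$ via Jacobi shows that the eigenvalues of $\ad h$ on $\bbK e^+$ and $\bbK e^-$ are negatives of one another, which rewrites as $[[a, x], e^\msg] = \zeta^\sg(x, a) e^\msg$, i.e.\ as \eqref{eq:zetasym}. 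For nondegeneracy, decompose $P^\sg$ into root spaces $\cG_\mu$; root-grading shows $\zeta^\sg(\cG_\mu, \cG_{-\nu}) = 0$ unless $\mu = \nu$, and in the diagonal case $[x, a]$ is a nonzero multiple of the coroot $t_\mu$ (when $\kappa(x, a) \ne 0$, where $\kappa$ is the Killing form), giving $\zeta^\sg(x, a)$ proportional to $(\mu^+, \mu)\kappa(x, a)$. Since $\langle \mu, \mu^+ \rangle = \sg$ forces $(\mu^+, \mu) \ne 0$ and $\kappa|_{\cG_\mu \times \cG_{-\mu}}$ is nondegenerate, so is $\zeta^\sg$.

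Part (vii) follows by Jacobi expansion of $[[\omega x, a], y]$ and of $\omega([[x, a], y])$ using (v), together with $[e^\msg, a] = 0$ (by 5-grading degree) and the identity $[e^\msg, [x, a]] = \zeta^\sg(x, a) e^\msg$ (another consequence of the eigenvalue calculation in (vi)). For (viii), the Jacobi rearrangement $[[A, B], C] - [[C, B], A] = [[A, C], B]$ reduces the claim to showing $[[x, z], \omega y] = -\zeta^\sg(x, \omega z) y$ for $x, y, z \in P^\sg$. To establish this, apply $\omega$ to the first formula of (vii) with $a$ replaced by $\omega z$: starting from $[[\omega x, \omega z], y] = \zeta^\sg(x, \omega z) \omega y$ and using that $\omega$ is an automorphism together with $\omega^2 = -\id$ on $P^- + P^+$ from (iv), the left side becomes $[[x, z], \omega y]$ and the right side becomes $-\zeta^\sg(x, \omega z) y$. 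The main obstacle I anticipate is (vi)---extracting both the symmetry and the nondegeneracy of $\zeta^\sg$ from the $\cS$-action and root-space structure---after which (vii) and (viii) are Jacobi shuffles anchored by $\omega^2 = -\id$.
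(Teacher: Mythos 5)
Your proof is correct, and its overall architecture --- the $\spl_2$-triple $(e^-,h^+,e^+)$ acting adjointly, the truncation of the $\chi_S$-grading at degree $\pm 2$, and Jacobi manipulations anchored by (v) and $\omega^2 = -\id$ on $P^-+P^+$ --- is the same as the paper's. Two local steps, both inside part (vi), are handled by genuinely different arguments. For the symmetry \eqref{eq:zetasym} the paper simply applies the automorphism $\omega$ to \eqref{eq:defzeta2} and uses $\omega(e^\sg)=-e^\msg$ together with (iv); your route instead sets $h=[x,a]$, notes $[h,h^+]=0$, and expands $[h,[e^+,e^-]]$ by Jacobi to conclude that the eigenvalues of $\ad h$ on $\bbK e^+$ and $\bbK e^-$ sum to zero. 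This is a clean alternative, and it has the side benefit of producing the identity $[e^\msg,[x,a]]=\zeta^\sg(x,a)e^\msg$ that you then reuse in (vii), where the paper instead quotes \eqref{eq:defzeta2} and \eqref{eq:zetasym}. For nondegeneracy the paper argues via associativity of the Killing form $\kappa$: if $\zeta^\sg(x,P^\msg)=0$ then $[x,P^\sg]=0$ by \eqref{eq:defzeta2}, whence $\kappa(x,P^\msg)=\kappa(x,[P^\sg,e^\msg])=0$ by (v), and the nondegenerate $\kappa$-pairing of $\cG(\chi_S)_{1}$ with $\cG(\chi_S)_{-1}$ forces $x=0$; your block-diagonalization over root spaces, with $\zeta^\sg$ on $\cG_\mu\times\cG_{-\mu}$ equal to $\sg(\mu,\mu^+)\kappa(\cdot,\cdot)$ and $(\mu,\mu^+)\ne 0$ guaranteed by (i), reaches the same conclusion at the cost of a little more computation but with more explicit information about $\zeta^\sg$. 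Finally, your direct exponential expansion for (v), in place of the paper's appeal to $2\times 2$ matrix computations in the two-dimensional irreducible $\cS$-module, also checks out: the one nontrivial cancellation it requires is $[e^\sg,[e^\msg,x]]=[h^{+},x]=\sg x$ appropriately signed for $x\in P^\sg$, which is just the Jacobi identity plus the vanishing of $\cG(\chi_S)_{\pm 3}$.
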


\begin{proof}
(i): The first equality is  standard \cite[Prop.~8.2(g)]{H} and the second was seen in  Theorem \ref{thm:KanSko}(i).

(ii) and (iii):  (ii) follows from (i), and (iii) follows from (ii) and \eqref{eq:omega2}.

(iv) and (v): Let $T = P^- \oplus P^+$ in $\cG$.  Then $T$ is an $\cS$-submodule of $\cG$ under the adjoint action.
Also,  by (ii), $P^-$ and $P^+$ are respectively the $-1$ and $1$
eigenspaces for $\ad(h^+)$ in    $T$.  So, by $\spl_2$-theory
\cite[VIII, \S1.2--1.3]{Bour},
the $\cS$-module $T$ is the direct sum of
copies of the  $2$-dimensional irreducible $\cS$-module.
Then (iv) and (v) follow by well known $2\times 2$-matrix calculations (see for example
\cite[VIII, \S1.5]{Bour}
with $X_\sg = \sg e^\sg$).

(vi):  The first statement follows from \eqref{eq:esg}.
Then $[x,\omega a] = \sg [x,[e^\sg,a]] = -\sg \zeta^\sg(x,a)e^\sg$;
and  \eqref{eq:zetasym} follows by   applying $\omega$
and using \eqref{eq:omega2} and (iv).  For nondegeneracy, it is enough to show that if $x\in P^\sg$  and $[x,P^\sg] = 0$, then $x = 0$.   For this, let  $\kappa$ be the Killing form.  Then
$0 =  \kappa([x,P^\sg],e^\msg) = \kappa(x,[P^\sg,e^\msg]) = \kappa(x,P^\msg)$
by (iii) and (v).  But $\cG(\chi_S)_1$ and $\cG(\chi_S)_{-1}$ are paired by $\kappa$, so $x=0$.

(vii):   Using  \eqref{eq:defzeta2}, \eqref{eq:zetasym} and (v), we have
$[[\omega x,a],y] = -\sg \zeta^\msg(a,x) [e^\msg, y] = \zeta^\sg(x,a)\omega y$.
Then,
since $\ad(e^\msg)$ is a derivation of $\cG$ which kills $P^\msg$, we have
$\omega \big([[x,a],y]\big) = [[\omega x,a],y] + [[x,a],\omega y] = \zeta^\sg(x,a)\omega y + [[x,a],\omega y]$.

(viii):  Using (iv), the first identity of (vii) and \eqref{eq:zetasym}, we have
$[[x,\omega y],z] - [[z,\omega y],x]
= [ [x,z],\omega y]  =  [[\omega^2 z,x],\omega y]  = \zeta^\msg(\omega z,x) \omega^2 y
= - \zeta^\sg(x,\omega z)y$.
\end{proof}

\begin{theorem} \label{thm:symplectic}
 A   trilinear pair is a finite dimensional simple close-to-Jordan pair if and only if it is isomorphic to the signed double of a nonzero finite dimensional balanced $(1,1)$-FKTS with non-degenerate skew-form (see Example \ref{ex:FKTS}(ii)).
 \end{theorem}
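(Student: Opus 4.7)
The plan is to establish both directions using the machinery of Lemma~\ref{lem:chardouble} and Lemma~\ref{lem:omega}. The forward direction transports a grade-reversing involution $\omega$ of $\Kan(P)$ into an explicit triple-system structure on $P^-$; the reverse direction verifies the Kantor, close-to-Jordan and simplicity conditions directly from the properties of $X$.

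For the ``only if'' direction, let $P$ be a finite dimensional simple close-to-Jordan pair. By Theorem~\ref{thm:Kantor}(ii) and Theorem~\ref{thm:KanSko}(ii) (noting that $\type{A}{1}$ admits no close-to-Jordan simple pairs), $P \simeq \fP(S)$ where $S$ is the set of nodes of $\Pi$ adjacent to $\mu^-$ in $\tPi$, so Lemma~\ref{lem:omega} applies and yields a grade-reversing $\omega \in \Aut(\Kan(P))$ with $\omega^2\mid_{P^-+P^+} = -\id$. Lemma~\ref{lem:chardouble} then identifies $P$ with the signed double of $X := P^-$ equipped with the product $\{x,y,z\} := \{x, \omega y, z\}^-$, and $X$ is automatically a $(1,1)$-FKTS since its signed double $P$ is Kantor. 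Lemma~\ref{lem:omega}(viii) computes $K(x,z)w = -\zeta^-(x, \omega z)\, w$, so $X$ is balanced with skew form $\langle x,z\rangle := -\zeta^-(x, \omega z)$. Non-degeneracy of this form follows from the non-degeneracy of $\zeta^-$ together with the bijectivity of $\omega\mid_{P^-} \colon P^- \to P^+$. For skew-symmetry, applying $\omega$ to \eqref{eq:defzeta1} and using $\omega(e^\sg) = -e^\msg$ yields the auxiliary identity $\zeta^+(\omega x, \omega a) = \zeta^-(x, a)$; combining with \eqref{eq:zetasym} and $\omega^2\mid_{P^-} = -\id$ produces $\zeta^-(x, \omega z) = -\zeta^-(z, \omega x)$, hence $\langle x,z\rangle = -\langle z,x\rangle$.

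For the ``if'' direction, suppose $X$ is a nonzero finite dimensional balanced $(1,1)$-FKTS with non-degenerate skew form $\bform$, and let $P$ be its signed double. By the definition of $(1,1)$-FKTS, $P$ is a Kantor pair, and from $K(x,z) = \langle x,z\rangle \id_X$ a direct computation gives $K^\sg(x,z) = \sg\langle x,z\rangle \id_X$ on $P$; since $\bform$ is nonzero (being non-degenerate on a nonzero space), $P$ is close-to-Jordan. The main obstacle is to establish simplicity of $P$, which by Lemma~\ref{lem:double} reduces to showing that $X$ is non-polarized and simple. For non-polarization I would substitute $K(x,z) = \langle x,z\rangle \id_X$ into the second $(1,1)$-FKTS identity and pick $x,z$ with $\langle x,z\rangle \ne 0$ to extract the skew-symmetry relation $L(u,w) - L(w,u) = \langle w,u\rangle \id_X$; then assuming a polarization $X = X^-\oplus X^+$ and testing this identity with elements in $X^\sg$, $X^\msg$ against vectors in each piece, the polarization axioms force $\bform$ to vanish identically, contradicting non-degeneracy. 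For simplicity of $X$, my plan is to build an invariant symmetric bilinear form on the finite-dimensional Lie algebra $\Kan(P)$ from $\bform$ together with the brackets into $\Kan(P)_{\pm 2}$; non-degeneracy of this form would give semisimplicity of $\Kan(P)$, and the $5$-grading combined with $\dim \Kan(P)_{\pm 2} = 1$ and the irreducibility of $P^-\oplus P^+$ under $\Kan(P)_0$ (afforded by non-degeneracy of $\bform$) would pin $\Kan(P)$ down as simple, whence $P$ is simple by \eqref{eq:Kansimp}.
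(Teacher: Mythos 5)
Your ``only if'' direction is essentially the paper's argument: both reduce to $P = \fP(S)$ via Theorem \ref{thm:KanSko}, apply Lemma \ref{lem:chardouble} with the automorphism $\omega$ of Lemma \ref{lem:omega} (parts (iii) and (iv)), and read off the balanced skew form from Lemma \ref{lem:omega}(viii), with nondegeneracy from Lemma \ref{lem:omega}(vi). (Your separate computation of skew-symmetry is unnecessary: for any balanced $(1,1)$-FKTS the form is automatically skew, since $K(x,z) = -K(z,x)$ by definition of $K$; the paper records this in Example \ref{ex:FKTS}(ii).)

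The ``if'' direction is where you diverge from the paper, and where there is a genuine gap. The paper disposes of simplicity by citing Faulkner's construction \cite{F} of a \emph{simple} $5$-graded Lie algebra $\fS(X,\mathfrak R(X))$ with one-dimensional extreme components, checking that it envelops the signed double, and invoking Proposition \ref{prop:fdsimp}. You instead attempt a direct proof, and the pivotal step --- that nondegeneracy of an invariant symmetric bilinear form on $\Kan(P)$ gives semisimplicity --- is false in characteristic $0$: quadratic Lie algebras need not be semisimple (any abelian Lie algebra with a nondegenerate symmetric form is a counterexample, as is the perfect, non-semisimple algebra $\fg\ltimes\fg^*$ for $\fg$ semisimple). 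Semisimplicity requires nondegeneracy of the \emph{Killing} form, which your construction does not address. The subsequent appeal to ``irreducibility of $P^-\oplus P^+$ under $\Kan(P)_0$'' is also unsupported: $P^-$ and $P^+$ are each $\Kan(P)_0$-invariant, so their sum is never irreducible when both are nonzero, and irreducibility of each $P^\sg$ is essentially the simplicity statement you are trying to establish, not a consequence of nondegeneracy of $\bform$. (Your non-polarization argument, by contrast, can be completed --- most directly from $K(x,z)=\langle x,z\rangle\id_X$ itself rather than from the $L$-identity.) To close the gap you should either cite \cite{F} as the paper does, or supply an actual proof that the enveloping $5$-graded Lie algebra is simple.
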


 \begin{proof}
 ``$\Rightarrow$" By  Theorem \ref{thm:KanSko}(iii) we can assume that the trilinear pair is $P = \fP(S)$.  Then by Lemma \ref{lem:chardouble}
 and Lemma 5.5.1 (parts (iii) and (iv)), $P$ is isomorphic to the signed double
 of the triple system $X = P^-$ with product $\{a,b,c\} := \{a,\omega b, c\}^-$. So by definition
 $X$ is a $(1,1)$-FKTS.
 Moreover $X$ is balanced with skew form
$\langle a, b \rangle = -\zeta^-(a,\omega b)$
 by Lemma  \ref{lem:omega}(viii); $\bform$ is nondegenerate by Lemma \ref{lem:omega}(vi); and  $X \ne 0$ since $P$ is simple.

 ``$\Leftarrow$" Conversely suppose that  $X$ is a non-zero balanced $(1,1)$-FKTS with nondegenerate skew form.
 Then, we can use the construction  from \cite[Thm.~1]{F}  of a simple $5$-graded Lie algebra
 $\mathfrak S = \mathfrak S(X, \mathfrak R(X))$
 with $\dim(\mathfrak S)_{\sg 2} = 1$.  (Actually, the construction starts with a balanced symplectic ternary algebra, but we have seen the translation in  Remark \ref{rem:11FKTS}.)  One can easily check
 that $\mathfrak S$ envelops the signed-double of $X$.  Hence the signed-double of $X$
 is a simple close-to-Jordan pair by Proposition \ref{prop:fdsimp}.  \end{proof}

In \cite[Cor.~2]{FF}, the authors used the work of Meyberg \cite{M1} on Freudenthal triple systems, to give constructions
(mainly as $2\times 2$ matrix systems  with Jordan entries) of all
balanced symplectic algebras with nondegenerate skew forms. (Note
however that there is a missing term $(a_1 \times a_2) \times b_3$
in the expression for $c$ in \cite[(3.4)]{FF}.)  In view of Remark
\ref{rem:11FKTS}, this gives constructions of all finite dimensional
balanced $(1,1)$-FKTS's with nondegenerate skew forms.  Hence, by
Theorem \ref{thm:symplectic}, we obtain constructions of all finite dimensional simple close-to-Jordan pairs.

\section{SP-graded Kantor pairs} \label{sec:SP}

\emph{Assume again that $\bbK$ is a commutative associative ring containing $\frac 16$}.

\subsection{SP-graded Kantor pairs} \label{subsec:SPgr}
Suppose  $P$ is a trilinear pair.    If  $G$ is an abelian group and $P_g = (P_g^-,P_g^+)$ for $g\in G$,
where $P_g^\sg$ is an ${\bbK}$-submodule of $P^\sg$ for $g\in G$, $\sg = \pm$, we write
$P =  \bigoplus_{g\in G} P_g$ to mean that $P^\sg = \bigoplus_{g\in G} P^\sg_g$ for $\sg = \pm$.

We say that $P =  \bigoplus_{g\in G} P_g$ is a \emph{$G$-grading} of $P$ if
\begin{equation*} \label{eq:Zgrading}
\{P_g^\sg,P_{g'}^\msg,P_{g''}^\sg\} \subseteq P_{g-g'+g''}^\sg
\end{equation*}
for $g,g',g''\in G$, $\sg = \pm$.  In that case, each $P_g$ is a subpair of  $P$,
and we say that \emph{$P$ is $G$-graded}.

If
$P$ is $G$-graded, we endow $P^\op$ with the $G$-grading given by
\[(P^\op)_g^\sg = P_g^\msg.\]

We recall from Subsection \ref{subsec:termgrade} that the unmodified terms \emph{simple} and \emph{isomorphic}
for $G$-graded pairs will be used in the ungraded sense, and that we have a notion of \emph{isomorphism} for $G$-gradings on a trilinear pair.

If $P$ is a Kantor pair,
a \emph{short Peirce grading} (or \emph{SP-grading}) of $P$ is a
$\bbZ$-grading $P =  \bigoplus_{i\in \bbZ}P_i$ such that
$P_i^\sg = 0$ for $\sg=\pm$  and $i\ne 0,1$.
In that case we have
$P = P_0 \oplus P_1$, and we call the graded pair $P$
a \emph{short Peirce graded} (or \emph{SP-graded}) Kantor pair.

Any Kantor pair $P$ has at least two SP-gradings, the \emph{zero SP-grading}
$P = P_0$ with $P_1 = 0$, and the
\emph{one SP-grading} $P = P_1$ with $P_0 = 0$. We call these two SP-gradings \emph{trivial}.

Clearly, the opposite of an SP-graded Kantor pair
is an SP-graded Kantor  pair.

\subsection{$\BCtwo$-graded Lie algebras and SP-graded Kantor pairs}
\label{subsec:BC2}
We now  recall from \cite[\S 4]{AFS} the relationship between SP-graded Kantor pairs
and $\BCtwo$-graded  Lie algebras.

For this purpose, \emph{suppose for the rest of this section that}
\begin{equation*}
\label{eq:BC2}
\Delta = \Delta_{\BCtwo} := \{\pm \al_1, \pm \al_2, \pm(\al_1+\al_2), \pm 2\al_1, \pm(2\al_1+\al_2), \pm(2\al_1+2\al_2)\}.
\end{equation*}
\emph{is  the irreducible root system of type $\BCtwo$ with base $\Pd = \set{\al_1,\al_2}$}.
We identify $Q_\De = \bbZ^2$  using the
$\bbZ$-basis $\Pd$ for $Q_\De$, so  any $\BCtwo$-graded Lie algebra
is a $\bbZ^2$-graded Lie algebra.

If  $L=\bigoplus_{(i,j)\in \bbZ^2}\rL_{(i,j)}$ is a $\bbZ^{2}$-graded Lie algebra,
we often write $L_{(i,j)}$ as $L_{i,j}$ for brevity. Then the \textit{first component grading}
of $L$ is defined to be  the $\bbZ$-grading $L=\bigoplus_{i\in \bbZ} L_{i,*}$, where  $L_{i,*}=\bigoplus_{j\in \bbZ} L_{i,j}$.

Suppose   that $L$ is a $\BCtwo$-graded Lie algebra. Then, $L$ with its first component grading is a $5$-graded Lie algebra,
which therefore envelops a Kantor pair
\[P = (L_{-1,*},L_{1,*}) =  ( L_{-1,0} \oplus L_{-1, -1} , L_{ 1,0} \oplus L_{ 1, 1} ).\]
Moreover, $P = P_0 \oplus P_1$, where
\begin{equation*} \label{eq:Pisg}
P_i^\sg = L_{\sg 1,\sg i}
\end{equation*}
for $\sg = \pm$, $i\in\bbZ$, is
an SP-grading of~$P$ \cite[\S 4.3]{AFS}.
We call $P$ with this grading the
\emph{SP-graded Kantor pair enveloped by the $\BCtwo$-graded Lie algebra $L$},
and we say that \emph{the $\BCtwo$-graded Lie algebra $L$ envelops the SP-graded Kantor pair $P$}.

Observe that $\De_\sh = \set{\pm\al_1,\pm(\al_1+\al_2)}$.  So if $L$ is a $\BCtwo$-graded Lie algebra and $P$ is the
SP-graded Kantor pair enveloped by $L$, then (as in the $\BCone$ case)
\begin{equation} \label{eq:Psumshort}
P^-\oplus P^+ \textstyle =  \bigoplus_{\al\in \De_\sh} L_\al \quad \text{in } L.
\end{equation}

Every SP-graded Kantor pair is enveloped by some $\BCtwo$-graded Lie algebra.  Indeed, if $P$ is
an SP-graded Kantor pair, then $\Kan(P)$ has a unique $\BCtwo$-grading, called its \emph{standard $\BCtwo$-grading}, such that
the $\Kan(P)$ with this grading envelops
$P$  \cite[Prop.~4.4.1]{AFS}.

Note that   two $SP$-graded Kantor pairs are graded-isomorphic if and
only if the corresponding Kantor Lie algebras with their standard $\BCtwo$-gradings
are graded-isomorphic \cite[Prop.~4.4.2]{AFS}.

We know that any simple SP-graded Kantor pair is enveloped by a simple $\BCtwo$-graded
Lie algebra, namely $\Kan(P)$ with its standard $\BCtwo$-grading.  Conversely, we have
the following  proposition:

\begin{proposition}
\label{prop:BC2} \cite[Prop.~4.4.2(iii)]{AFS}
Let   $P$ be a nonzero SP-graded Kantor pair and suppose that $L$
is a simple $\BCtwo$-graded Lie algebra that envelops $P$.
Then $L$ is graded-isomorphic to $\Kan(P)$ with its standard
$\BCtwo$-grading.
\end{proposition}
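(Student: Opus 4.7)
The plan is to reduce to the $\BCone$-case already handled by Proposition~\ref{prop:BC1}. Viewing $L$ with only its first-component $\bbZ$-grading, $L$ becomes a simple $5$-graded Lie algebra that envelops $P$ as an ungraded Kantor pair. Proposition~\ref{prop:BC1} therefore yields a graded isomorphism $\varphi : L \to \Kan(P)$ of $5$-graded Lie algebras, where $\Kan(P)$ is equipped with the $5$-grading underlying its standard $\BCtwo$-grading. The goal is then to upgrade $\varphi$ to a $\BCtwo$-graded isomorphism.

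I would first normalize $\varphi$ so that its restriction to $L_{\sg 1} = P^\sg = \Kan(P)_{\sg 1}$ is the identity for $\sg = \pm$. The pair of restrictions $\psi = (\psi^-,\psi^+)$ of $\varphi$ is an automorphism of the ungraded Kantor pair $P$. By the principle recalled in Subsection~\ref{subsec:BC1} that Kantor pair isomorphisms induce graded isomorphisms of their Kantor Lie algebras, $\psi$ lifts to a $5$-graded automorphism $\tilde{\psi}$ of $\Kan(P)$. Replacing $\varphi$ by $\tilde{\psi}^{-1} \circ \varphi$ accomplishes the desired normalization while preserving the property that $\varphi$ is a $5$-graded isomorphism.

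With this normalization in hand, I would compare the two $\BCtwo$-gradings through $\varphi$. By the definition of ``envelops'' applied to both $L$ and $\Kan(P)$, the restriction of each $\BCtwo$-grading to its first-component $\sg 1$ piece recovers the same SP-grading on $P^\sg$; explicitly, $L_{\sg 1, \sg i} = P_i^\sg = \Kan(P)_{\sg 1, \sg i}$ for $\sg = \pm$ and $i = 0,1$. Hence the two $\BCtwo$-gradings already agree on $L_{\pm 1} = \Kan(P)_{\pm 1}$ under $\varphi$. From the decomposition supplied in Proposition~\ref{prop:BC1}, both $L$ and $\Kan(P)$ are generated as Lie algebras by the sum of their degree $\pm 1$ components, so since $\varphi$ is a Lie algebra isomorphism respecting the $\BCtwo$-grading on these generators and the grading group is additive, a short induction on bracket length shows that $\varphi$ respects the $\BCtwo$-grading everywhere.

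The main obstacle is the normalization step: one needs to lift an arbitrary automorphism of the ungraded Kantor pair $P$ to a $5$-graded automorphism of $\Kan(P)$, which is a standard consequence of the functoriality of the Kantor Lie algebra construction (\cite[\S~3.5]{AFS}). Without such a lift one could not conclude that the two $\BCtwo$-gradings agree on $L_{\pm 1}$, and the bracket-length induction in the third paragraph would have nothing to start from.
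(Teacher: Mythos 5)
Your argument is correct. Note, however, that the paper does not actually prove this proposition: it is quoted verbatim from \cite[Prop.~4.4.2(iii)]{AFS}, so there is no in-text proof to compare against. What you have written is a reasonable self-contained reconstruction: reduce to the $\BCone$ statement (Proposition \ref{prop:BC1}) via the first-component grading, normalize the resulting $5$-graded isomorphism using the functoriality of $P \mapsto \Kan(P)$ so that it is the identity on $L_{\pm 1} = P^{\pm} = \Kan(P)_{\pm 1}$, observe that by the very definition of ``envelops'' for SP-graded pairs both $\BCtwo$-gradings restrict on the degree $\pm 1$ parts to the given SP-grading of $P$, and then propagate agreement of the finer gradings using generation by $L_{-1} + L_{1}$. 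Two small remarks. First, your ``short induction on bracket length'' can be made completely explicit and finite: by \eqref{eq:Ldecomp} every homogeneous component of $L$ outside degree $\pm 1$ is a sum of single brackets of degree $\pm 1$ elements, e.g.\ $L_{\sg 2, \sg j} = \sum_{i+i'=j} [L_{\sg 1, \sg i}, L_{\sg 1, \sg i'}]$, so only brackets of length two are ever needed. Second, your normalization step is indeed essential and correctly identified as the crux: without it one only knows that $\varphi$ carries $L_{\sg 1,\sg i}$ onto $\psi^{\sg}(P_i^{\sg})$, and an arbitrary automorphism of the ungraded pair $P$ need not preserve the SP-grading; the lift $\tilde\psi$ exists precisely because the Kantor Lie algebra construction is functorial for (ungraded) pair isomorphisms, as recorded in Subsection \ref{subsec:BC1}.
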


\begin{proposition} \label{prop:Pop=Pgr}  If $\bbK$ is an algebraically closed field
of characteristic $0$ and $P$ is a  finite dimensional
simple SP-graded Kantor pair over $\bbK$, then $P^\op \simgr P$.  Consequently $P_i \simeq P_i^\op$, so
the Kantor pair $P_i$ has balanced dimension and balanced $2$-dimension for $i=0,1$.
\end{proposition}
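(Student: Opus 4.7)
The plan is to prove $P^\op \simgr P$ by producing a grade-reversing automorphism of the enveloping $\BCtwo$-graded Lie algebra and restricting it to the pair. The consequence about balanced (2-)dimension of $P_0$ and $P_1$ will then follow immediately from Lemma \ref{lem:baldim} applied to each graded piece.

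First, since $P$ is simple, \eqref{eq:Kansimp} tells us that $\Kan(P)$ is simple; and since $P$ is finite dimensional, so is $\Kan(P)$ by \eqref{eq:Kandef}. Equip $\Kan(P)$ with its standard $\BCtwo$-grading, and view this as a $\bbZ^2$-grading via the identification $Q_\Delta = \bbZ^2$ using the base $\Pd = \set{\al_1,\al_2}$. Then Proposition \ref{prop:Zngrading}(iii) yields a period 2 grade-reversing automorphism $\omega$ of $\Kan(P)$, that is, $\omega(\Kan(P)_{(i,j)}) = \Kan(P)_{(-i,-j)}$ for all $(i,j)\in \bbZ^2$.

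Next I would verify that $\omega$ induces a graded isomorphism $P \to P^\op$. Writing $L := \Kan(P)$, recall that $P_i^\sg = L_{\sg 1,\sg i}$, so
\[
\omega(P_i^\sg) \;=\; \omega(L_{\sg 1,\sg i}) \;=\; L_{-\sg 1,-\sg i} \;=\; P_i^\msg \;=\; (P^\op)_i^\sg.
\]
Hence $\ph^\sg := \omega|_{P^\sg}$ carries $P^\sg$ to $(P^\op)^\sg$ and preserves the $\bbZ$-grading index $i$. Since $\omega$ is a Lie algebra automorphism and the products on both $P$ and $P^\op$ are computed inside $L$ by $\{x,y,z\}^\sg = [[x,y],z]$, we get
\[
\ph^\sg(\{x,y,z\}_P^\sg) \;=\; \omega([[x,y],z]) \;=\; [[\omega x,\omega y],\omega z] \;=\; \{\ph^\sg x,\ph^\msg y,\ph^\sg z\}_{P^\op}^\sg,
\]
so $(\ph^-,\ph^+)$ is a graded isomorphism $P \simgr P^\op$.

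For the consequence, restrict the graded isomorphism to each homogeneous component: $\ph^\sg$ maps $P_i^\sg$ onto $(P^\op)_i^\sg = (P_i)^\op{}^\sg$, so $(\ph^-|_{P_i^-},\ph^+|_{P_i^+})$ is an isomorphism $P_i \simeq P_i^\op$ of Kantor pairs (recall $P_i$ is a subpair since $P$ is $\bbZ$-graded). Both $P_0$ and $P_1$ are finite dimensional, so Lemma \ref{lem:baldim} gives balanced dimension and balanced $2$-dimension for each. The only subtle step is the index bookkeeping in passing from the $\bbZ^2$-level grade-reversal $\omega$ to the $\bbZ$-level opposite pair structure, which is what the calculation $L_{-\sg 1,-\sg i} = P_i^\msg$ handles cleanly.
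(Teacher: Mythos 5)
Your proof is correct and follows essentially the same route as the paper's: invoke Proposition \ref{prop:Zngrading}(iii) on $\Kan(P)$ with its standard $\BCtwo$-grading to obtain a period~2 grade-reversing automorphism $\omega$, observe that $\omega$ exchanges $P_i^+$ and $P_i^-$, and conclude via Lemma \ref{lem:baldim}. The paper's version is just terser, leaving the index bookkeeping and the verification that $\omega$ respects the triple products implicit.
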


\begin{proof}
By Proposition \ref{prop:Zngrading}(iii), we can choose  $\omega \in \Aut(\Kan(P))$ of period 2
such that $\omega(\Kan(P)_{k,i}) = \Kan(P)_{-k,-i}$ for $k,i\in \bbZ$. Then
$\omega$ exchanges $P^+_i$ and $P^-_i$ for $i=0,1$, so $(\omega \mid_{P^-},\omega \mid_{P^+})$
is a graded-isomorphism of $P$ onto  $P^\op$.  Hence $P_i \simeq P_i^\op$ for $i=0,1$, and the proof is complete by
Lemma \ref{lem:baldim}.
\end{proof}

\subsection{Weyl images of SP-graded Kantor pairs}
\label{subsec:SPWeyl}

Let  $s_\al\in W_\De$ be the reflection through the hyperplane
orthogonal to $\al$ for $\al \in \De$, and put $s_i = s_{\al_i}$ for $i=1,2$.
The generators $s_1$ and $s_2$ of $W_\De$ satisfy  $s_1s_2s_1s_2 = s_2s_1s_2s_1 = -1$,
and
$\Aut(\De) = W_\De = \set{1, s_1, s_2,  s_2s_1, -1,  -s_1, -s_2,  -s_2s_1}$
is the dihedral group of order $8$.

Let $P$ be an SP-graded Kantor pair and let $u\in W_\De$.
If we choose a $\BCtwo$-graded Lie algebra $L$ that envelops $P$, then $\p{u}L$
is  also a $\BCtwo$-graded Lie algebra that therefore
envelops an SP-graded Kantor pair which we denote by $\p{u}P$.
It turns out that $\p{u}P$ is independent of
the choice of $L$ \cite[Lemma 5.1.2(iv)]{AFS}, and we call
$\p{u}P$ the \emph{$u$-image} (or a \emph{Weyl image})
of $P$.  It is clear that  \emph{Weyl images respect graded isomorphisms}; that
is  $P \simgr Q \implies \p{u}P \simgr \p{u}Q$.
Moreover,  $\p{1} P = P$, and, by \eqref{eq:left0},
\[\p{u_1}(\p{u_2}P) = \p{u_1u_2} P \quad \text{  for } u_1,u_2\in \WD.\]

Since $s_1$ and $s_2$ generate $W_\Delta$, the SP-graded Kantor pairs $\p{s_1}P$ and
$\p{s_2}P$ are of particular importance.
For convenience, we use the notation
\[\bP := \p{s_1}P\]
and call this SP-graded Kantor pair the \emph{reflection} of $P$.
It  is easy to check that
\begin{equation} \label{eq:refKP}
\bP^\sg_i = P^{\pi(i)\sg}_i,
\end{equation}
for $\sg = \pm$, $i=0,1$, where $\pi(0) = -$ and $\pi(1) = +$ \cite[Prop.~5.2.1]{AFS}.
Moreover the $\sg$-product $\{,,\rbrv$ on $\bP$ is  given in terms
of the products on $P$ by
\begin{gather*}
\{x_i^\sg,y_i^\msg,z^\sg_i\rbrv
= \{x_i^\sg,y_i^\msg,z_i^\sg\}^{\pi(i)\sg},\
\{x_{1-i}^\sg,y_{1-i}^\msg,z^\sg_{i}\rbrv
= -\{y_{1-i}^\msg,x_{1-i}^\sg,z_{i}^\sg\}^{\pi(i)\sg},\\
\{x_i^\sg,y_{1-i}^\msg,z^\sg_i\rbrv
= 0, \andd
\{x_{i}^\sg,y_{1-i}^\msg,z^\sg_{1-i}\rbrv
= K^{\pi(i)\sg}(x_{i}^\sg,y_{1-i}^\msg)z_{1-i}^\sg.
\end{gather*}
for $\sg = \pm$, $i=0,1$, where $x^\tau_j, y^\tau_j, z^\tau_j\in \bP^\tau_j$ in each case \cite[Prop.~5.2.1]{AFS}.
However, we will not use these expressions in this article, but rather directly use the definition of $\bP$ given above.  It turns out that in
general $\bP$ is not isomorphic to $P$ as an ungraded Kantor pair (as we saw in \cite{AFS}
and will see again in Example~\ref{ex:e6reflect}).

In contrast,  the SP-graded Kantor pair $\p{s_2}P$ has an easy description.  We have
\[\p{s_2}P = \bar P,\]
where $\bar P$ is the SP-graded Kantor pair, called the \emph{shift}
of $P$ that equals $P$ as a Kantor pair and has $\bbZ$-grading given
by  $\bar P_i = P_{1-i}$ for $i\in \bbZ$ \cite[(21)]{AFS}.
In particular, $\bar P$ is isomorphic as an ungraded pair to~$P$.

Finally, it is clear that  the SP-graded pair $\p{-1}P$ is simply
$P^\op$.

\begin{proposition} Suppose  $P$ is a simple SP-graded Kantor pair and $u\in \WD$.  Then $\p{u}P$ is simple.  Moreover, if  $\bbK$ is an algebraically closed field of characteristic $0$ and $P$ is finite dimensional, then $\p{u}P$  has the same balanced dimension as $P$.
\end{proposition}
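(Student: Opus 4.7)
The plan is to realize $\p{u}P$ inside the Kantor Lie algebra of $P$ and transport information across the envelopment dictionary of Subsection \ref{subsec:BC2}. Specifically, take $L = \Kan(P)$ with its standard $\BCtwo$-grading. Since $P$ is simple, \eqref{eq:Kansimp} tells us $L$ is simple as an ungraded Lie algebra, and hence so is $\p{u}L$ (twisting the grading by $u$ does not change the underlying algebra). By definition of the $u$-image, $\p{u}L$ envelops $\p{u}P$.

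The key observation for both parts is that $W_\Delta$ preserves $\De_\sh$, since each element of $W_\Delta$ acts as an isometry of $E_\Delta$ and therefore permutes the short roots among themselves. Applying \eqref{eq:Psumshort} to $\p{u}L$ and using $u^{-1}\De_\sh = \De_\sh$, I would then obtain
\[
\p{u}P^- \oplus \p{u}P^+ \; = \; \bigoplus_{\al \in \De_\sh} (\p{u}L)_\al \; = \; \bigoplus_{\al \in \De_\sh} L_{u^{-1}\al} \; = \; \bigoplus_{\beta \in \De_\sh} L_\beta \; = \; P^- \oplus P^+
\]
as $\bbK$-modules. In particular $\p{u}P$ is nonzero and its total dimension equals that of $P$, which will be crucial for the balanced-dimension statement.

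To conclude simplicity, I would argue that $\p{u}L$ is a simple $\BCtwo$-graded Lie algebra enveloping the nonzero SP-graded Kantor pair $\p{u}P$. Proposition \ref{prop:BC2} then yields $\p{u}L \simgr \Kan(\p{u}P)$, so $\Kan(\p{u}P)$ is simple, and \eqref{eq:Kansimp} gives simplicity of $\p{u}P$. For the balanced-dimension assertion, the extra hypotheses let me invoke Proposition \ref{prop:KTS} for both $P$ and $\p{u}P$ (each is a finite dimensional simple Kantor pair), so each of them has balanced dimension; dividing the equality $\dim(\p{u}P^-) + \dim(\p{u}P^+) = \dim(P^-) + \dim(P^+)$ by $2$ then finishes the proof.

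I do not expect a real obstacle here: the only substantive ingredient beyond the BC${}_2$/SP-grading correspondence already set up in Subsection \ref{subsec:BC2} is the standard fact that the Weyl group permutes roots of equal length, which is what forces $\p{u}P^- \oplus \p{u}P^+$ to coincide with $P^- \oplus P^+$ as an underlying module.
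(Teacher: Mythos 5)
Your argument is correct, but it takes a genuinely different route from the paper's. For simplicity, the paper simply cites \cite[Prop.~4.1.4]{AFS}, whereas you give a self-contained derivation: $\Kan(P)$ with its standard $\BCtwo$-grading is simple by \eqref{eq:Kansimp}, its $u$-image is the same underlying (hence still simple) algebra enveloping $\p{u}P$, the identity $\bigoplus_{\al\in\De_\sh}L_{u^{-1}\al}=\bigoplus_{\al\in\De_\sh}L_\al$ (valid since $\WD$ preserves root lengths) shows $\p{u}P\ne 0$, and then Proposition \ref{prop:BC2} together with \eqref{eq:Kansimp} closes the loop --- and this works over the general ring $\bbK$, as it must. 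For the balanced-dimension claim the divergence is more substantive: the paper reduces to the generators $s_1,s_2$ of $\WD$, handles the shift $\bar P$ trivially, and for the reflection $\bP$ uses the explicit component formula \eqref{eq:refKP} together with the balancedness of the Peirce component $P_0$ from Proposition \ref{prop:Pop=Pgr}; you instead treat an arbitrary $u$ uniformly, observing that $\p{u}P^-\oplus\p{u}P^+=P^-\oplus P^+$ as submodules of $L$ forces equality of total dimensions, and then divide by $2$ after invoking Proposition \ref{prop:KTS} (or \ref{prop:Pop=Pgr}) to know both pairs are balanced. Your version is shorter and avoids the reflection formula entirely; the paper's version isolates exactly where the balancedness of $P_0$ (as opposed to $P$) is used, which dovetails with Remark \ref{rem:shift2dim} on why the analogous statement for balanced $2$-dimension fails under reflection. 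Both proofs are complete and correct.
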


\begin{proof}  The  first statement is seen in \cite[Prop.~4.1.4]{AFS}.  For the second statement,  it suffices to prove that
$\bar P$ and $\bP$ have the same balanced dimension as $P$ (since $\WD = \langle s_1,s_2\rangle$).
This is clear for $\bar P$; while  for $\bP$ it follows from
\eqref{eq:refKP} and the fact that $P_0$ has balanced dimension by
Proposition \ref{prop:Pop=Pgr}.
\end{proof}

\begin{remark} \label{rem:shift2dim}
Under the assumptions of the second statement of the proposition, it is clear that  the shift $\bar P$ of an SP-graded $P$ has the same balanced $2$-dimension as $P$.  However  that is not true
for the reflection $\bP$ of $P$ (see Example \ref{ex:e6reflect} below).
\end{remark}

\section{Classification of finite dimensional simple SP-graded Kantor pairs}
\label{sec:SPfd}
\emph{For  the rest of this article we assume  that $\bbK$ is
an algebraically closed field of characteristic $0$,
and $\Pi$ is the connected Dynkin diagram of type $\Xn$}.

 \emph{We also assume $\cG$ is a finite dimensional simple Lie algebra
over $\bbK$ of type $\Xn$ with Cartan subalgebra $\cH$ and root system
$\Sigma = \Sigma(\cG,\cH)$ relative to $\cH$.  Let
$\Ps$ be a base for this root system, in which case there
exists a diagram
isomorphism $\iota: \Pi \to \Ps$, which we use to identify $\Pi = \Ps$}.

Also,  as in Section \ref{sec:KPfd},
\emph{$\mu^+$ is the highest root of $\Sigma$ relative to $\Pi$,
$\mu^- = -\mu^+$ and
$\tPi=\Pi\cup\{\mu^-\}$ is the extended Dynkin diagram for $\Pi$}.
If  $Y$ is a non-empty subset of $\tPi$, then $Y$ is the disjoint union of its connected components;  and if $\lm\in Y$, we use the notation $\comp(Y,\lm)$ for the \emph{connected component of $Y$ containing  $\lm$}.

\emph{We further assume that
\[\Delta = \Delta_{\BCtwo} := \{\pm \al_1, \pm \al_2, \pm(\al_1+\al_2), \pm 2\al_1, \pm(2\al_1+\al_2), \pm(2\al_1+2\al_2)\}\]
is   the irreducible root system of type $\BCtwo$ with base  $\Pd = \set{\al_1,\al_2}$}.

We identify $Q_\De = \bbZ^2$ using the $\bbZ$-basis $\Pd$ for $Q_\De$.
We then have the identification
\begin{equation*} \label{eq:identhom}
\Hom(\QS,Q_\Delta) = \Hom(\QS,\bbZ^2) = \hQo^2,
\end{equation*}
where in the last equality $(\rho_1,\rho_2)$ in $\hQo^2$ is identified with the element of $\Hom(\QS,\bbZ^2)$
given by   $\mu \mapsto(\rho_1(\mu),\rho_2(\mu))$.

\subsection{SP-admissible pairs of subsets of $\Pi$}
\label{subsec:SPadmit}

To
emphasize the  role of $\BCtwo$ here, we  will write  $\Hom(\Sigma,\Delta)$,
$\Hom_\sh(\Sigma,\Delta)$, $\Hom_\pos(\Sigma,\Delta)$ and
$\Hom_\psh(\Sigma,\Delta)$ respectively as
$\Hom(\Sigma,\BCtwo)$, $\Hom_\sh(\Sigma,\BCtwo)$, $\Hom_\pos(\Sigma,\BCtwo)$ and
$\Hom_\psh(\Sigma,\BCtwo)$.
So, setting $\square_2 := \set{0,1,2}\times \set{0,1,2}$, we have
\begin{align}
\label{eq:HomBC2}
\Hom(\Sigma,\BCtwo) &= \{\rho\in \Hom(\QS,\bbZ^2) \suchthat \notag
\\
&\qquad\qquad\rho(\Sigma)\subseteq \square_2 \cup (-\square_2),
\ (1,2)\notin \rho(\Sigma),\  (0,2) \notin \rho(\Sigma)\},
\\
\label{eq:HomBC2sh}
\Hom_\sh&(\Sigma,\BCtwo) =  \set{(\rho_1,\rho_2)\in \Hom(\Sigma,\BCtwo) \suchthat 1\in  \rho_1(\Sigma)}.
\end{align}

If $(S,T)$ is a pair of subsets of $\Pi$,  we  use the notation
\[\chiST := (\chi_S,\chi_T) \in \hQt\]
and say $(S,T)$ is \emph{SP-admissible}
if  $\chiST\in \Hom_\sh(\Sigma,\BCtwo)$
(or equivalently $\chiST\in \Hom_\psh(\Sigma,\BCtwo)$).
We let
\[\SPA(\Pi) = \text{the set of all SP-admissible pairs of subsets of
$\Pi$}.\]

\begin{remark} \label{rem:SPAwelldef}
Just as in Remark \ref{rem:KAwelldef}, the  set $\SPA(\Pi)$  is well-defined.
\end{remark}

\begin{proposition} \label{prop:SPadmit} The map $(S,T) \mapsto \chiST$ is a
bijection of the set $\SPA(\Pi)$ onto the set $\Hom_\psh(\Sigma,\BCtwo)$.
\end{proposition}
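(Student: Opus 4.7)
The plan is to verify the three properties (well-definedness of the target, injectivity, surjectivity) in that order, with essentially all of the work concentrated in surjectivity. Well-definedness of the target is immediate from the definition of $\SPA(\Pi)$, and injectivity follows from the fact that $\Pi$ is a $\bbZ$-basis of $\QS$, so that $(\chi_S,\chi_T)$ is determined by its values on $\Pi$, which in turn determine $(S,T)$.

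For surjectivity, I would fix $\rho = (\rho_1,\rho_2) \in \Hom_\psh(\Sigma,\BCtwo)$ and aim to show that $\rho_i(\lambda) \in \{0,1\}$ for every $\lambda \in \Pi$ and each $i \in \{1,2\}$. Granted this, the pair $(S,T)$ with $S = \{\lambda\in\Pi \suchthat \rho_1(\lambda)=1\}$ and $T=\{\lambda\in\Pi \suchthat \rho_2(\lambda)=1\}$ satisfies $\chiST = \rho$ on $\Pi$ (hence on $\QS$), which makes $(S,T)\in \SPA(\Pi)$ by the definition of SP-admissibility, and shows that $\rho$ lies in the image.

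To bound the values of $\rho_i$ on $\Pi$, positivity combined with \eqref{eq:HomBC2} first gives $\rho_i(\lambda)\in\{0,1,2\}$ for $\lambda\in\Pi$. Next, parallel to the $\BCone$ case in the proof of Proposition \ref{prop:Kadmit1}, the irreducibility of $\Sigma$ gives $\mu_0 := \sum_{\lambda\in\Pi}\lambda \in \Sigma^+$. Suppose for contradiction that $\rho_1(\lambda_0)=2$ for some $\lambda_0\in\Pi$. Then $\rho_1(\mu_0) = \sum_{\lambda\in\Pi} \rho_1(\lambda) \geq 2$, but also $\rho_1(\mu_0)\leq 2$ by \eqref{eq:HomBC2}, forcing $\rho_1(\lambda)=0$ for every $\lambda\in \Pi\sm\{\lambda_0\}$. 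Then $\rho_1(\mu) = 2n_{\lambda_0}$ for every $\mu = \sum n_\lambda \lambda \in \Sigma$, so $\rho_1(\Sigma)\subseteq 2\bbZ$, contradicting the shortness condition $1\in \rho_1(\Sigma)$. Hence $\rho_1(\lambda)\in\{0,1\}$ throughout $\Pi$.

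It remains to rule out $\rho_2(\lambda_0)=2$. If this held, then $\rho(\lambda_0)=(\rho_1(\lambda_0),2)\in \Delta^+$, and the exclusions $(0,2),(1,2)\notin \rho(\Sigma)$ built into \eqref{eq:HomBC2} force $\rho_1(\lambda_0)=2$, contradicting the previous paragraph. I do not anticipate a serious obstacle: the one non-trivial fact used is the standard observation that the sum of the simple roots of an irreducible root system is a positive root, which is already invoked silently in the proof of Proposition \ref{prop:Kadmit1}, and the only genuinely new ingredient beyond that proof is the second-coordinate argument that parasitically reduces $\rho_2(\lambda_0)=2$ to the case already disposed of for $\rho_1$.
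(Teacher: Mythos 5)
Your proof is correct, but your surjectivity argument takes a genuinely different (and more elementary) route than the paper's. Both treatments handle $\rho_1$ the same way: since $\sum_{\lambda\in\Pi}\lambda\in\Sigma^+$ by irreducibility and $1\in\rho_1(\Sigma)$, one gets $\rho_1(\lambda)\in\{0,1\}$ on $\Pi$ (the paper simply quotes Proposition \ref{prop:Kadmit1}; your contradiction argument with $\rho_1(\lambda_0)=2$ forcing $\rho_1(\Sigma)\subseteq 2\bbZ$ is an equivalent rerun of that proof). The divergence is in the second coordinate. The paper splits into cases: if $1\in\rho_2(\Sigma)$ it repeats the $\rho_1$ argument, while if $1\notin\rho_2(\Sigma)$ it passes to the Lie algebra, noting $\rho(\Sigma)\subseteq\set{(0,0),\pm(1,0),\pm(2,0),\pm(2,2)}$ and then invoking Lemma \ref{lem:rootgrG} and the generation statement \eqref{eq:Ldecomp} for $\cG(\rho)$ to force $\rho_2(\Sigma)=0$. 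You instead observe that once $\rho_1(\lambda)\in\{0,1\}$ on $\Pi$, the exclusions $(0,2),(1,2)\notin\rho(\Sigma)$ built into \eqref{eq:HomBC2} already rule out $\rho_2(\lambda)=2$ for $\lambda\in\Pi$, since the only element of $\set{0,1,2}^2$ with second entry $2$ that survives is $(2,2)$, whose first entry is too large. This is a clean, purely root-combinatorial argument: it needs no case split and makes no appeal to the structure theory of $\cG$, showing the proposition is really a statement about $\Hom_\psh(\Sigma,\BCtwo)$ alone. What the paper's route buys is a uniform coordinate-by-coordinate reuse of Proposition \ref{prop:Kadmit1}, at the cost of importing \eqref{eq:Ldecomp}; yours is shorter and self-contained. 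Both are valid.
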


\begin{proof}  All but surjectivity is clear.  For surjectivity, suppose that $\rho =(\rho_1,\rho_2)\in \Hom_\psh(\Sigma,\BCtwo)$.
Then, by \eqref{eq:HomBC2} and \eqref{eq:HomBC2sh}, $\rho_1(\Sigma) \subseteq \set{0,\pm 1,\pm 2}$ and $1\in \rho_1(\Sigma)$.
Hence, since  $\rho_1(\al) \ge 0$ for $\al\in \Sigma^+$, we have $\rho_1 = \chi_S$ for some $S \subseteq\Pi$ by Proposition \ref{prop:Kadmit1}.
Also, if $1\in \rho_2(\Sigma)$, we have $\rho_2 = \chi_T$ for some $T\subseteq \Pi$ by the same argument.  So we can assume
that $1\notin \rho_2(\Sigma)$.  Hence, by \eqref{eq:HomBC2},
$\rho(\Sigma)\subseteq \set{(0,0), \pm (1,0) ,  \pm (2,0), \pm  (2,2)}$.
Therefore,  $\cG(\rho)_{-1,*} + \cG(\rho)_{1,*} \subseteq \cG(\rho)_{*,0}$
(using the notation of Section \ref{subsec:BC2}).  But the left hand side
of this inclusion is  nonzero by Lemma \ref{lem:rootgrG}, so this space generates $\cG$ as an algebra by
\eqref{eq:Ldecomp}.
Therefore $\cG(\rho) = \cG(\rho)_{*,0}$, so $\rho_2(\Sigma) = 0$.  Thus $\rho_2 = 0 = \chi_\emptyset$.
\end{proof}

\begin{lemma}
\label{lem:SPadmit}  Let $\rho = (\rho_1,\rho_2) \in \Hom(\QS,\bbZ^2)$.  Then
$\rho \in \Hom_\sh(\Sigma,\BCtwo)$ if and only if
$\rho(\Sigma)\subseteq \square_2 \cup (-\square_2)$, $(1,2)\notin \rho(\Sigma)$ and $1\in \rho_1(\Sigma)$.
\end{lemma}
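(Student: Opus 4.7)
The forward implication is immediate from \eqref{eq:HomBC2} and \eqref{eq:HomBC2sh}. For the converse, assuming (a), (b), and (c), the only condition in \eqref{eq:HomBC2} not already supplied by (a) and (b) is $(0,2) \notin \rho(\Sigma)$; once this is proven, (c) and \eqref{eq:HomBC2sh} yield $\rho \in \Hom_\sh(\Sigma,\BCtwo)$. So the goal reduces to ruling out $(0,2) \in \rho(\Sigma)$.

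My plan is to argue by contradiction: suppose $\mu \in \Sigma$ satisfies $\rho(\mu) = (0,2)$, and exploit the first coordinate $\rho_1$ to pin down $\rho_2(\mu)$. Conditions (a) and (c) say that $\rho_1(\Sigma) \subseteq \set{0,\pm 1,\pm 2}$ and $1 \in \rho_1(\Sigma)$, so by \eqref{eq:HomBC1}, $\rho_1 \in \Hom_\sh(\Sigma,\BCone)$. Hence $\cG(\rho_1)$ is a simple $5$-graded Lie algebra with nonzero degree $\pm 1$ components, and Proposition \ref{prop:BC1} delivers the decomposition \eqref{eq:Ldecomp}; in particular,
\[\cG(\rho_1)_0 = [\cG(\rho_1)_{-1},\cG(\rho_1)_1].\]

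Next I would constrain $\rho_2$ on $\cG(\rho_1)_{\pm 1}$. For any $\alpha \in \Sigma$ with $\rho_1(\alpha) = 1$, the first coordinate of $\rho(\alpha)$ is positive, so (a) forces $\rho(\alpha) \in \square_2$, whence $\rho_2(\alpha) \in \set{0,1,2}$; then (b) excludes the value $2$, leaving $\rho_2(\alpha) \in \set{0,1}$. Symmetrically, $\rho_2(\beta) \in \set{-1,0}$ whenever $\rho_1(\beta) = -1$. Since $\rho_1(\mu) = 0$, the nonzero root space $\cG_\mu$ lies in $\cG(\rho_1)_0 = [\cG(\rho_1)_{-1},\cG(\rho_1)_1]$, so $\cG_\mu$ is a sum of brackets $[\cG_\beta,\cG_\alpha]$ with $\alpha+\beta=\mu$, $\rho_1(\alpha)=1$, and $\rho_1(\beta)=-1$; at least one such summand must be nonzero, and for it $\rho_2(\mu) = \rho_2(\alpha)+\rho_2(\beta) \in \set{-1,0,1}$, contradicting $\rho_2(\mu) = 2$. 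The obstacle I anticipate is psychological rather than technical: it is tempting to attempt a purely combinatorial argument via $\nu$-strings through $\mu$, which rapidly becomes case-ridden, whereas the clean route is to pass through the global decomposition \eqref{eq:Ldecomp} via Proposition \ref{prop:BC1}, after which the rest is coordinate bookkeeping in $\bbZ^2$.
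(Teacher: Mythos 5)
Your proposal is correct and follows essentially the same route as the paper: both reduce the converse to excluding $(0,2)$ from $\rho(\Sigma)$ and both obtain this from the decomposition \eqref{eq:Ldecomp} applied to the first-component $5$-grading, which forces any degree with vanishing first coordinate to be a sum of two elements of $\set{\pm(1,0),\pm(1,1)}$. The only cosmetic difference is that you phrase the final step at the level of individual root spaces $\cG_\mu = \sum_{\alpha+\beta=\mu}[\cG_\beta,\cG_\alpha]$, whereas the paper argues directly with $\supp_{\bbZ^2}(\rL)$; the content is identical.
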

\begin{proof}  In view of \eqref{eq:HomBC2}, we only need to prove ``$\Leftarrow$''.
Let $\rL = \cG(\rho)$, which is a $\bbZ^2$-graded Lie algebra with
$\supp_{\bbZ^2}(L) = \rho(\Sigma \cup \set{0})$. By our assumptions,
the first component grading $\rL=\bigoplus_{i\in \bbZ}\rL_{i,*}$ of $\rL$ is a $5$-grading
with $\rL_{-1,*} + \rL_{1,*} \ne 0$.  Hence, by   \eqref{eq:Ldecomp},
each element in $\rL$ has the form
$x + \sum [y_j,z_j]$, where $x,y_j,z_j\in  \rL_{-1,*} + \rL_{1,*}$.
Therefore each element of $\supp_{\bbZ^2}(\rL)$
is either an element in
$\supp_{\bbZ^2}(\rL_{-1,*} + \rL_{1,*})$ or it is a sum of two such elements.
But by our assumptions $\supp_{\bbZ^2}(\rL_{-1,*} + \rL_{1,*}) \subseteq \set{\pm (1,0),\pm (1,1)}$.
Hence $(0,2)\notin  \supp_{\bbZ^2}(L)$ as needed.
\end{proof}

\begin{lemma}
\label{lem:connected}  If $\mu\in\Sigma$, then $\{\mu^-\}\cup \supp_{\Pi}\{\mu^+-\mu\}$ is a
connected subset of $\tPi$.
\end{lemma}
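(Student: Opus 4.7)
The plan is to induct on $h := \height_\Pi(\mu^+ - \mu) \ge 0$. Since $\mu^+$ is the highest root of $\Sigma$, we have $\mu^+ - \mu \in \bbZ_{\ge 0}\Pi$ for every $\mu \in \Sigma$, so $h$ is well-defined. The base case $h = 0$ forces $\mu = \mu^+$, whence $\supp_\Pi(\mu^+ - \mu) = \emptyset$ and the set reduces to $\set{\mu^-}$, trivially connected.

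For the inductive step, assume $h \ge 1$. By a standard chain argument (cf.\ \cite[VI, \S 1.6, Prop.~19]{Bour}), there is $\gamma \in \Pi$ with $\mu' := \mu + \gamma \in \Sigma$; then $\height_\Pi(\mu^+ - \mu') = h - 1$, so by induction $Y' := \set{\mu^-} \cup \supp_\Pi(\mu^+ - \mu')$ is connected in $\tPi$. Writing $\mu^+ - \mu = (\mu^+ - \mu') + \gamma$ shows that $Y := \set{\mu^-} \cup \supp_\Pi(\mu^+ - \mu) = Y' \cup \set{\gamma}$. If $\gamma \in Y'$ then $Y = Y'$ is already connected, so assume $\gamma \notin Y'$; in this case the $\gamma$-coefficient of $\mu'$ must equal the $\gamma$-coefficient $m_\gamma$ of $\mu^+$.

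It remains to exhibit a neighbor of $\gamma$ in $\tPi$ inside $Y'$. The key observation is that $\mu + 2\gamma \notin \Sigma$: otherwise its $\gamma$-coefficient would be $m_\gamma + 1$, impossible since no root's $\gamma$-coefficient exceeds that of the highest root. Combined with $\mu + \gamma \in \Sigma$, the $\gamma$-string formula then yields $\langle \mu, \gamma \rangle \ge -1$, and hence $(\gamma, \mu') = (\gamma, \mu) + (\gamma, \gamma) > 0$. Since $\mu^+$ is the highest root, $(\gamma, \mu^+) \ge 0$, and I split on its sign. If $(\gamma, \mu^+) > 0$ then $\gamma$ is adjacent to $\mu^-$ in $\tPi$, done. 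Otherwise $(\gamma, \mu^+) = 0$, giving $(\gamma, \mu^+ - \mu') = -(\gamma, \mu') < 0$; expanding $\mu^+ - \mu' = \sum_{\lambda} c_\lambda \lambda$ with $\lambda$ ranging over $\supp_\Pi(\mu^+ - \mu')$ and $c_\lambda > 0$, each $(\gamma, \lambda)$ is nonpositive (as $\gamma$ lies outside that support, so $\lambda \ne \gamma$ in $\Pi$), and the strict negativity of the sum forces some $(\gamma, \lambda) < 0$, giving $\gamma$ a neighbor inside $\supp_\Pi(\mu^+ - \mu') \subseteq Y'$.

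The most delicate step is the implication $\gamma \notin Y' \implies \langle \mu, \gamma \rangle \ge -1$; it rests on the seemingly incidental maximality of the $\gamma$-coefficient of $\mu^+$ among all roots, and without it the inner-product dichotomy in the final paragraph would fail to cover the non-simply-laced cases.
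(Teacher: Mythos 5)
Your induction and the endgame are essentially the paper's (same induction on $\height_\Pi(\mu^+-\mu)$, same passage from $\mu$ to $\mu'=\mu+\gamma$, and a correct if slightly more case-split way of showing $\gamma$ attaches to the rest — the paper instead notes that $\mu'$ lies in the lattice generated by $\set{\mu^-}\cup\supp_\Pi(\mu^+-\mu')$, so $\langle\mu',\gamma\rangle\ne 0$ forces $\langle\lambda,\gamma\rangle\ne0$ for some $\lambda$ in that set). However, there is one genuine gap: the very first step of your inductive case, the existence of $\gamma\in\Pi$ with $\mu+\gamma\in\Sigma$, is false when $\mu\in-\Pi$. If $\mu=-\lambda$ with $\lambda\in\Pi$, then $\mu+\gamma$ is either $0$ (for $\gamma=\lambda$) or $\gamma-\lambda$ (for $\gamma\ne\lambda$), and the latter is never a root since its coordinates in the basis $\Pi$ have mixed signs. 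These $\mu$ do fall into your inductive case ($\height_\Pi(\mu^+-\mu)=\height_\Pi(\mu^+)+1\ge 2$), so the argument as written breaks there; no citation of Bourbaki's chain lemmas will produce the missing $\gamma$.

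The fix is cheap and is exactly what the paper does: dispose of $\mu\in-\Pi$ separately by observing that $\mu^+$ has full support in $\Pi$ (as $\Sigma$ is irreducible), so $\supp_\Pi(\mu^+-\mu)=\supp_\Pi(\mu^++\lambda)=\Pi$ and the set in question is all of $\tPi$, which is connected. With that case inserted, the rest of your argument goes through: for $\mu\notin-\Pi$ with $\mu\ne\mu^+$ the desired $\gamma$ does exist, your deduction of $\mu+2\gamma\notin\Sigma$ from the maximality of the $\gamma$-coefficient of $\mu^+$ is valid (it uses $\gamma\notin\supp_\Pi(\mu^+-\mu')$, which you have), the root-string estimate $\langle\mu,\gamma\rangle\ge-1$ and hence $(\gamma,\mu')>0$ is correct, and the dichotomy on $(\gamma,\mu^+)$ correctly produces a neighbour of $\gamma$ in $\set{\mu^-}\cup\supp_\Pi(\mu^+-\mu')$.
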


\begin{proof}
Let $S(\mu) = \{\mu^-\}\cup \, \supp_{\Pi}\{\mu^+-\mu\}$ for $\mu\in \Sigma$.
We prove that $S(\mu)$ is connected  by  induction on the non-negative
integer $\height_{\Pi}(\mu^+-\mu)$. First, if
$\height_{\Pi}(\mu^+-\mu) = 0$, then $\mu = \mu^+$ and $S(\mu) = \set{\mu^-}$
is connected.  Suppose that $\height_{\Pi}(\mu^+-\mu) > 0$.  Then
$\mu\ne \mu^+$.  Furthermore, we can  assume that $\mu\notin -\Pi$, since
otherwise $S(\mu) = \tPi$ is connected.
So there exists
$\lm\in \Pi$ with
\[ \nu = \mu + \lm \in \Sigma.\]
Then, by the induction hypothesis, $S(\nu)$ is connected.  Further, $S(\mu) = S(\nu) \cup \set{\lm}$.
So it remains to show that $\langle S(\nu),\lm\rangle\ne 0$.
For this we can assume that $\lm\notin S(\nu)$, so
$\chi_\lm(\nu) = \chi_\lm(\mu^+)$.
Hence $\nu+\lm\notin \Sigma$.  Thus, since $\nu-\lm\in\Sigma$,
we have $\langle \nu,\lm\rangle \ne 0$.  But then, since $\nu$ lies in the group generated
by $S(\nu)$, we have $\langle S(\nu),\lm\rangle\ne 0$.
\end{proof}

We have the following characterization of SP-admissible pairs of subsets of $\Pi$.
\begin{proposition}
\label{prop:path}
Suppose that $S,T \subseteq \Pi$. Then the following are equivalent:
\begin{itemize}
  \item[(a)] $(S,T) \in \SPA(\Pi)$.
  \item[(b)] $S \ne \emptyset$ and $\chiST \in \Hom(\Sigma,\BCtwo)$
    \item[(c)] $\chi_S(\mu^+) \in \set{1,2}$; $\chi_T(\mu^+) \in \set{0,1,2}$; and if $\chi_T(\mu^+) = 2$, then
    $\chi_S(\mu^+) = 2$ and  $\comp(\tPi\setminus T, \mu^-)\cap S = \emptyset$.
\end{itemize}
\end{proposition}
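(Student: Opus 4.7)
The plan is to establish the two equivalences (a) $\Leftrightarrow$ (b) and (b) $\Leftrightarrow$ (c). The first is essentially a definition check: $(S,T) \in \SPA(\Pi)$ means $\chiST \in \Hom_\sh(\Sigma,\BCtwo)$, which by \eqref{eq:HomBC2sh} differs from ``$\chiST \in \Hom(\Sigma,\BCtwo)$'' only in requiring $1 \in \chi_S(\Sigma)$; I will simply note that this holds iff $S \ne \emptyset$, since $\chi_S(\lambda) = 1$ for any $\lambda \in S \subseteq \Sigma$ while $\chi_\emptyset = 0$.

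For the forward direction (b) $\Rightarrow$ (c), I first evaluate $\chiST$ at $\mu^+ \in \Sigma^+$, which must lie in $\square_2$; since $\Sigma$ is irreducible every simple root has positive coefficient in $\mu^+$, so $S \ne \emptyset$ forces $\chi_S(\mu^+) \ge 1$, proving (c1), and (c2) is immediate.  Assuming in addition $\chi_T(\mu^+) = 2$, the exclusion of $(0,2),(1,2)$ from $\chiST(\Sigma)$ forces $\chi_S(\mu^+) = 2$.  The delicate step is showing $\comp(\tPi \sm T, \mu^-) \cap S = \emptyset$: given a contrary $\lambda$, I pick a simple path $\mu^- = \nu_0, \nu_1, \ldots, \nu_m = \lambda$ inside the component and consider $\mu := s_{\nu_m}\cdots s_{\nu_1}(\mu^+) \in \Sigma$.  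An induction on $m$ will show $\mu^+ - \mu = \sum_{i=1}^m k_i \nu_i$ with every $k_i \ge 1$: the base case uses $\langle \mu^+, \nu_1 \rangle \ge 1$ (as $\nu_1$ is adjacent to $\mu^-$ in $\tPi$), while in the inductive step $\langle \mu^+, \nu_i \rangle \ge 0$, $\langle \nu_j, \nu_i \rangle \le 0$ for distinct simple $\nu_j, \nu_i$, and the path adjacency $\nu_{i-1} \sim \nu_i$ in $\Pi$ yields $-k_{i-1}\langle \nu_{i-1}, \nu_i \rangle \ge 1$.  Consequently $\chi_T(\mu^+ - \mu) = 0$ (the path avoids $T$) and $\chi_S(\mu^+ - \mu) \ge k_m \ge 1$ (since $\lambda \in S$), so $\chiST(\mu) \in \set{(0,2),(1,2)}$, contradicting~(b).

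For the converse (c) $\Rightarrow$ (b), $S \ne \emptyset$ is immediate from (c1); it remains to verify $\chiST \in \Hom(\Sigma,\BCtwo)$.  For $\mu \in \Sigma^+$ the bound $\mu \le \mu^+$ gives $0 \le \chi_R(\mu) \le \chi_R(\mu^+) \le 2$ for $R \in \set{S,T}$, so $\chiST(\mu) \in \square_2$, and negation handles $\Sigma^-$.  To exclude $(0,2),(1,2)$ from $\chiST(\Sigma)$, I suppose $\chi_T(\mu) = 2$ for some $\mu \in \Sigma$; then $\mu \in \Sigma^+$, $\chi_T(\mu^+) = 2$, and (c3) applies.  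Since $\chi_T(\mu^+ - \mu) = 0$, Lemma \ref{lem:connected} forces $\{\mu^-\} \cup \supp_\Pi(\mu^+ - \mu)$ to be a connected subset of $\tPi \sm T$ containing $\mu^-$, hence contained in $\comp(\tPi \sm T, \mu^-)$; this set is disjoint from $S$ by (c3), so $\chi_S(\mu^+ - \mu) = 0$ and $\chi_S(\mu) = \chi_S(\mu^+) = 2$, as required.

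The main obstacle I expect is the construction of the root $\mu$ in (b) $\Rightarrow$ (c3): one must simultaneously track the signs of the pairings $\langle s_{\nu_{i-1}} \cdots s_{\nu_1}(\mu^+), \nu_i \rangle$ along the path, and the non-vanishing of each coefficient $k_i$ relies crucially on both the linear-chain structure of the chosen path inside $\tPi$ and the fact that $\mu^+$ pairs non-negatively with every simple root while pairing strictly positively with the neighbors of $\mu^-$.
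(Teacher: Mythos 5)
Your proof is correct, and in the crucial direction it takes a genuinely different route from the paper. For (c) $\Rightarrow$ (b) you argue essentially as the paper does, via Lemma \ref{lem:connected}; in fact your version is slightly more self-contained, because by showing directly that $\chi_T(\mu)=2$ forces $\chi_S(\mu)=2$ you exclude both $(0,2)$ and $(1,2)$ at once and never need Lemma \ref{lem:SPadmit}, which the paper invokes to reduce the failure of SP-admissibility to the single value $(1,2)$. The real divergence is in (b) $\Rightarrow$ (c): to manufacture a bad root from a node $\lm\in\comp(\tPi\sm T,\mu^-)\cap S$, the paper shortens the path so that its interior avoids $S$, observes that the resulting path is a nonempty proper connected subset of $\tPi$ and hence the diagram of a finite root system (citing \cite[Prop.~4.7(c)]{Kac}), and takes the sum of its nodes as the root $-\mu_P$; you instead apply the successive reflections $s_{\nu_m}\cdots s_{\nu_1}$ to $\mu^+$ and prove by induction that every coefficient $k_i$ of $\mu^+-\mu$ along the path is at least $1$. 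Your induction is sound: the base case uses $\langle\mu^+,\nu_1\rangle\ge 1$ from adjacency of $\nu_1$ to $\mu^-$ in $\tPi$ together with dominance of $\mu^+$, and the inductive step uses dominance, non-positivity of off-diagonal Cartan integers among the distinct simple roots $\nu_1,\dots,\nu_m$, and $\langle\nu_{i-1},\nu_i\rangle\le -1$. The paper's device is shorter but leans on an external citation; yours is longer but elementary, and it does not require arranging the interior of the path to avoid $S$ --- the only point worth one extra sentence is that if $\chi_S(\mu^+-\mu)>2$ then $\chi_S(\mu)<0$ while $\chi_T(\mu)=2$, which still contradicts (b) since $\chiST(\mu)$ then has coordinates of opposite sign, so the final contradiction goes through in every case, not only when $\chiST(\mu)\in\set{(0,2),(1,2)}$.
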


\begin{proof}  The equivalence of (a) and (b) is clear, so we only consider
the equivalence of (b) and (c).  For this we can assume that
$\chi_S(\mu^+) \in \set{1,2}$ and $\chi_T(\mu^+) \in \set{0,1,2}$
(since these statements hold if either (b) or (c) is assumed).
Also, if $\chi_T(\mu^+) = 0$ or $1$, then $\chi_T(\mu) \ne \pm 2$ for $\mu\in \Sigma$,
so (b) and (c) are each true.  Thus we can  suppose
$\chi_T(\mu^+) = 2$.
Then
if $\chi_S(\mu^+) = 1$, statements (b) and (c) are each false.  So we can suppose
$\chi_S(\mu^+) = 2$.
It remains to show that $(S,T)\in \SPA(\Pi)$ if and only  if
$\comp(\tPi\setminus T, \mu^-)\cap S = \emptyset$.
We establish the contrapositives of these implications.

Suppose  that $\comp(\tPi\setminus T, \mu^-)\cap S \ne \emptyset$.
Then there is a path $\lm_{0},\lm_{1},\ldots,\lm_{r}$ in
$\tPi$ which does
not pass through $T$ with $r\ge 1$, $\lm_{0}=\mu^-$, $\lm_{r}\in S$.  We can shorten this path if necessary to assume that the $\lm_i$'s are distinct and
that $\lm_{i}\in\Pi\setminus S$ for $1 \le i \le r-1$.
Then  $P := \set{\lm_{0},\lm_{1},\ldots,\lm_{r}}$ is a non-empty, proper and connected subset of $\tPi$, so the diagram for $P$
is the  diagram of an irreducible reduced finite root system \cite[Prop.~4.7(c)]{Kac}.  Hence
$\mu_P := \sum_{i=0}^r\lm_i \in \Sigma$.
So
\[-\mu_{P}=\mu^+-\lm_{1}-\cdots-\lm_{r}\in\Sigma^{+}.\]
Since
$\lm_{i}\notin T$ for $1\le i \le r$, we have
$\chi_T(-\mu_{P})=\chi_T(\mu^+)=2$.
Also, since $\lm_i\not\in S$ for $1\le i \le r-1$, we have
$\chi_S(-\mu_{P})=\chi_S(\mu^+)-\chi_S(\lm_r) = 2-1 = 1$.  Thus $(S,T)\notin \SPA(\Pi)$.

Conversely, suppose that $(S,T)\notin \SPA(\Pi)$.  Then, by Lemma \ref{lem:SPadmit},
there exists $\mu\in
\Sigma^{+}$ with $(\chi_S(\mu),\chi_T(\mu))=(1,2)$. Let $S(\mu) = \{\mu^-\}\cup \supp_{\Pi}\{\mu^+-\mu\}$.
Now $\chi_T(\mu^+-\mu) = 2 -2 = 0$, so
$T\cap S(\mu)=\emptyset$.  Also,
$\chi_S(\mu^+-\mu) = 2 -1 = 1$, so
$S\cap S(\mu)\neq\emptyset$.  Since $S(\mu)$ is connected
by Lemma \ref{lem:connected}, $S(\mu)\subseteq \comp(\tPi\setminus T,\mu^-)$, so
$\comp(\tPi\setminus T,\mu^-) \cap S \ne \emptyset$.
\end{proof}

It follows from Propositions \ref{prop:path}(c) and \ref{prop:Kadmit2}(c) that if $(S,T)\in \SPA(\Pi)$,
then $S$ is Kantor-admissible and $T$ is either empty or Kantor-admissible.

We define a right action
of $\Aut(\Pi)$ on the set of pairs of subsets of $\Pi$ by
\[(S,T) \cdot \ph := (S\cdot \ph, T\cdot \ph) = (\ph^{-1}(S),\ph^{-1}(T)).\]

Now, by \eqref{eq:philmS}, we have
\begin{equation}
\label{eq:philmST}
\chiST \cdot \ph=     \chi_{(S, T)\cdot \ph}
\end{equation}
for $S,T\subseteq \Pi$, $\ph\in \Aut(\Pi)$.
Using this  and the fact that $\Hom_\sh(\Sigma,\BCtwo)$ is stabilized
by the right action of $\Aut(\Pi)$, we see  that the $\SPA(\Pi)$
is stabilized by the right action of $\Aut(\Pi)$.  So \emph{we have a right
action $\cdot$ of $\Aut(\Pi)$ on $\SPA(\Pi)$}.

\subsection{Classification}
\label{subsec:SPfd}

\begin{construction} (The SP-graded Kantor pair $\fP(\Pi;S,T)$) \label{con:SP}
Suppose$(S,T) \in \SPA(\Pi)$,  in which case $\chiST \in \Hom_\sh(\Sigma,\BCtwo)$.
Then,  by Lemma \ref{lem:rootgrG},
$\cG(\chiST)$ is a $\BCtwo$-graded Lie algebra with $\cG(\chiST)_\al \ne 0$
for some $\al \in \De_\sh$. Let $\fP(\Pi;S,T)$ be the SP-graded Kantor pair
enveloped by $\cG(\chiST)$  (see Section \ref{subsec:BC2}).
Note that  $\fP(\Pi;S,T)$ is nonzero by \eqref{eq:Psumshort}.
Explicitly
$\fP(\Pi;S,T)$ is the Kantor pair $\fP(\Pi;S)$ (see Construction \ref{con:Kantor})
with the SP-grading given by
\begin{equation} \label{eq:fPSP}
\textstyle \fP(\Pi;S)_{\sg i}^\sg = \cG(\chi_{(S,T)})_{\sg 1, \sg i} = \sum_{\mu \in \QS,\ \chi_S(\mu) = \sg 1,\ \chi_T(\mu) = \sg i} \cG_\mu
\end{equation}
for $\sg = \pm$ and $i=0,1$. We call this grading the  \emph{SP-grading of $\fP(\Pi;S)$ determined by~$T$}.
\end{construction}

\begin{remark} \label{rem:PSTwelldef}
Just as in Remark \ref{rem:PSwelldef}, $\fP(\Pi;S,T)$ is  well-defined up to graded-isomorphism.
\end{remark}

Again since $\Pi$ is  fixed in our discussion
we will usually \emph{write $\fP(\Pi;S,T)$  as $\fP(S,T)$}

\begin{remark} \label{rem:SP} Let $(S,T)\in \SPA(\Pi)$.

(i) By  Proposition \ref{prop:BC2},  $\cG(\chiST)$  is
graded-isomorphic to $\Kan(\fP(S,T))$ with its standard $\BCtwo$-grading.

(ii) If $i=0,1$, $\fP(S,T)_i$ has balanced dimension by Proposition \ref{prop:Pop=Pgr}.  Moreover,
this balanced dimension equals  $\dim(\cG(\chiST)_{1,i})$ by (i), which equals
\[|\set{\mu\in \Sigma : \chi_S(\mu) = 1,\ \chi_T(\mu) = i}|. \]
\end{remark}

We have the following classification of  simple
SP-graded Kantor pairs of type~$\type{X}{\rkn}$.

\begin{theorem}  \label{thm:SP} Suppose that $\Pi$ is the  connected Dynkin diagram of type $\Xn$.
\begin{itemize}
\item [(i)]  If $(S,T)\in\SPA(\Pi)$, then $\fP(S,T)$ is a finite dimensional  simple SP-graded Kantor pair of type
$X_\rkn$.
\item[(ii)]  If $P$ is a  finite dimensional simple SP-graded Kantor pair of type
$\type{X}{\rkn}$, then $P$ is graded-isomorphic to
$\fP(S,T)$ for some $(S,T)\in \SPA(\Pi)$.
\item[(iii)]  If $(S,T),(S',T')\in\SPA(\Pi)$, then the Kantor pairs
$\fP(S,T)$ and  $\fP(S',T')$ are graded-isomorphic if and only $(S,T)$ and $(S',T')$
are in the same  orbit in $\SPA(\Pi)$ under the right action  of $\Aut(\Pi)$.
\end{itemize}
\end{theorem}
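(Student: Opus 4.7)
The plan is to reduce Theorem \ref{thm:SP} to the classification of $\BCtwo$-gradings of $\cG$ supplied by Theorem \ref{thm:rootgrG}, via two bridges already established: Proposition \ref{prop:BC2} sets up a bijection, up to graded isomorphism, between finite dimensional simple SP-graded Kantor pairs of type $X_n$ and finite dimensional simple $\BCtwo$-graded Lie algebras of type $X_n$, realized by $P \leftrightarrow \Kan(P)$ with its standard $\BCtwo$-grading; and Proposition \ref{prop:SPadmit} identifies $\SPA(\Pi)$ with $\Hom_\psh(\Sigma,\BCtwo)$ via $(S,T) \mapsto \chi_{(S,T)}$. Part (i) is essentially immediate from this setup: since $\chi_{(S,T)} \in \Hom_\sh(\Sigma,\BCtwo)$, Lemma \ref{lem:rootgrG} shows $\cG(\chi_{(S,T)})$ is a $\BCtwo$-graded Lie algebra with nontrivial short-root component, hence a simple $\BCtwo$-graded Lie algebra of type $X_n$, and Proposition \ref{prop:fdsimp} then forces the enveloped pair $\fP(S,T)$ to be a finite dimensional simple SP-graded Kantor pair of type $X_n$.

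For part (ii), I would take $L = \Kan(P)$ with its standard $\BCtwo$-grading; by Proposition \ref{prop:fdsimp} this $L$ is finite dimensional simple of type $X_n$, so after fixing an ungraded isomorphism with $\cG$ I may view the $\BCtwo$-grading of $L$ as a $\BCtwo$-grading of $\cG$. Theorem \ref{thm:rootgrG}(i) then produces $\rho \in \Hom_\pos(\Sigma,\BCtwo)$ with the given grading graded-isomorphic to $\cG(\rho)$. Since $P \ne 0$, the short-root description \eqref{eq:Psumshort} of $P^- \oplus P^+$ inside an enveloping $\BCtwo$-graded Lie algebra forces some short-root space of $\cG(\rho)$ to be nonzero, so by Lemma \ref{lem:rootgrG} actually $\rho \in \Hom_\psh(\Sigma,\BCtwo)$. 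Proposition \ref{prop:SPadmit} yields a unique $(S,T) \in \SPA(\Pi)$ with $\rho = \chi_{(S,T)}$, and the graded isomorphism $L \simgr \cG(\chi_{(S,T)})$ descends via Proposition \ref{prop:BC2} to the required $P \simgr \fP(S,T)$. For part (iii), the implication ``$\Leftarrow$'' is direct: if $(S',T') = (S,T) \cdot \ph$ for some $\ph \in \Aut(\Pi)$, then \eqref{eq:philmST} gives $\chi_{(S',T')} = \chi_{(S,T)} \cdot \ph$, and Theorem \ref{thm:rootgrG}(ii) produces a graded isomorphism $\cG(\chi_{(S,T)}) \simgr \cG(\chi_{(S',T')})$ whose enveloped SP-graded Kantor pairs are then graded-isomorphic. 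Conversely, a graded isomorphism $\fP(S,T) \simgr \fP(S',T')$ lifts through Proposition \ref{prop:BC2} to a graded isomorphism of their Kantor Lie algebras, i.e.\ $\cG(\chi_{(S,T)}) \simgr \cG(\chi_{(S',T')})$; Theorem \ref{thm:rootgrG}(ii) then gives $\ph \in \Aut(\Pi)$ with $\chi_{(S,T)} \cdot \ph = \chi_{(S',T')}$, and \eqref{eq:philmST} combined with the injectivity half of Proposition \ref{prop:SPadmit} yields $(S,T) \cdot \ph = (S',T')$.

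I do not anticipate a genuine obstacle beyond one bookkeeping subtlety in part (ii): Theorem \ref{thm:rootgrG}(i) by itself only delivers a positive homomorphism $\rho \in \Hom_\pos(\Sigma,\BCtwo)$, whereas the identification with $\SPA(\Pi)$ lives inside the smaller set $\Hom_\psh(\Sigma,\BCtwo)$. Upgrading ``positive'' to ``positive and short'' is precisely where the assumption $P \ne 0$ enters, through \eqref{eq:Psumshort} and Lemma \ref{lem:rootgrG}. Once this point is in place, the argument is a three-way diagram chase between $\SPA(\Pi)/\Aut(\Pi) \leftrightarrow \Hom_\psh(\Sigma,\BCtwo)/\Aut(\Pi)$ (Proposition \ref{prop:SPadmit} and \eqref{eq:philmST}), $\Hom_\pos(\Sigma,\BCtwo)/\Aut(\Pi) \leftrightarrow \{\BCtwo\text{-gradings of }\cG\}/{\simgr}$ (Theorem \ref{thm:rootgrG}), and $\{\text{simple }\BCtwo\text{-graded Lie algebras}\}/{\simgr} \leftrightarrow \{\text{simple SP-graded Kantor pairs}\}/{\simgr}$ (Proposition \ref{prop:BC2}).
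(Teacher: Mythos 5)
Your proposal is correct and follows essentially the same route as the paper: part (i) via Proposition \ref{prop:fdsimp}, part (ii) by transporting the standard $\BCtwo$-grading of $\Kan(P)$ to $\cG$, invoking Theorem \ref{thm:rootgrG}(i), and upgrading positivity to $\Hom_\psh(\Sigma,\BCtwo)$ via \eqref{eq:Psumshort} and Lemma \ref{lem:rootgrG}, and part (iii) by reducing graded isomorphism of the pairs to graded isomorphism of the corresponding $\cG(\chiST)$'s and applying Theorem \ref{thm:rootgrG}(ii) with \eqref{eq:philmST}. The ``bookkeeping subtlety'' you flag in (ii) is exactly the step the paper makes explicit, so nothing is missing.
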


\begin{proof}  (i): This follows from  Proposition \ref{prop:fdsimp}.

(ii): $\Kan(P)$ is simple of type $\type{X}{\rkn}$, so there is an isomorphism
$\eta: \Kan(P) \to \cG$.  We use $\eta$ to transport the standard  $\BCtwo$-grading of $\Kan(P)$ to $\cG$, so $\eta$ is a $\BCtwo$-graded
isomorphism.  Next by Theorem \ref{thm:rootgrG}(i) we can assume that
the $\BCtwo$-grading of $\cG$ is the $\rho$-grading for some
$\rho\in \Hom_\pos(\Sigma,\BCtwo)$.
Now $\Kan(P)_\al \ne 0$ for some $\al\in \De_\sh$
by \eqref{eq:Psumshort}, and thus\ $\cG(\rho)_\al \ne 0$ for some $\al\in \De_\sh$.
So by Lemma \ref{lem:rootgrG}(ii), $\rho \in \Hom_\psh(\Sigma,\BCtwo)$, and hence by Proposition \ref{prop:SPadmit},
$\rho = \chiST$ for some $(S,T)\in \SPA(\Pi)$.
Thus, under the restriction of $\eta$, $P \simgr\fP(S,T)$.

(iii):  Let $P = \fP(S,T)$ and $P' = \fP(S',T')$; and let $\rho = \chiST$ and
$\rho' =\chi_{(S',T')}$ in $\Hom_\psh(\Sigma,\BCtwo)$.
Then by Theorem \ref{thm:rootgrG}(ii) and \eqref{eq:philmST}, we know that
$\cG(\rho) \simgr \cG(\rho')$ if and only if
$(S,T)$ and $(S,'T')$ are in  the same orbit  in $\SPA(\Pi)$ under the right action   of $\Aut(\Pi)$.
So it remains to show that
$P\simgr P'$   if and only if
$\cG(\rho) \simgr \cG(\rho')$.
The  implication ``$\Leftarrow$'' in this statement is clear.  To
prove the converse,  suppose  that  $P\simgr P'$.
Then,  as noted in Subsection \ref{subsec:BC2}, $\Kan(P) \simgr \Kan(P')$; so, by Remark \ref{rem:SP}(i),
$\cG(\rho) \simgr \cG(\rho')$.
\end{proof}

\begin{remark} \label{rem:ungradedproof}
We note  that if we take $T=\emptyset$ everywhere in Theorem \ref{thm:SP}, we obtain
the classification  Theorem \ref{thm:Kantor}.
\end{remark}

As a corollary, we obtain the following
classification up to  isomorphism of the SP-gradings
on a fixed  simple Kantor pair.

\begin{corollary} \label{cor:SPPS} Suppose $S\in \KA(\Pi)$.
Any SP-grading on $\fP(S)$ is \griso{} to the SP-grading determined
by $T$ for some subset $T$ of $\Pi$ such that $(S,T)\in \SPA(\Pi)$.
Also, for two such subsets $T$ and $T'$ of $\Pi$, the SP-gradings
of $\fP(S)$ determined by $T$ and $T'$ are \griso{} if and only if  $(S,T)$
and $(S,T')$ are in the same orbit  in $\SPA(\Pi)$ under the  right action of $\Aut(\Pi)$.
\end{corollary}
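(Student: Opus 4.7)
The plan is to derive both statements directly from Theorem \ref{thm:SP}, with Theorem \ref{thm:Kantor}(iii) used only as a normalization step to adjust the first component of the admissible pair so that it matches $S$ on the nose. By the convention fixed in Subsection \ref{subsec:termgrade}, two SP-gradings of the Kantor pair $\fP(S)$ are isomorphic precisely when the corresponding SP-graded Kantor pairs are graded-isomorphic, so the corollary really asks for a classification of the graded pairs $\fP(S,T)$ (for various $T$) among those $\fP(S',T')$ whose underlying ungraded pair is isomorphic to $\fP(S)$.

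For the first statement, let $P$ denote $\fP(S)$ equipped with an arbitrary SP-grading. Since $P$ is a nonzero finite dimensional simple SP-graded Kantor pair of type $\Xn$, Theorem \ref{thm:SP}(ii) gives $(S',T')\in\SPA(\Pi)$ with $P \simgr \fP(S',T')$. Forgetting the gradings produces an ungraded isomorphism $\fP(S) \simeq \fP(S')$, and Theorem \ref{thm:Kantor}(iii) then forces $S$ and $S'$ to lie in the same $\Aut(\Pi)$-orbit. Choose $\ph\in \Aut(\Pi)$ with $S' = S\cdot \ph = \ph^{-1}(S)$, and set $T := \ph(T')$. Then $(S,T)\cdot\ph = (\ph^{-1}(S),\ph^{-1}(T)) = (S',T')$, so $(S,T)$ lies in the $\Aut(\Pi)$-orbit of $(S',T')$; in particular $(S,T)\in\SPA(\Pi)$, and by Theorem \ref{thm:SP}(iii) we obtain $\fP(S,T) \simgr \fP(S',T') \simgr P$, which establishes existence.

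For the second statement, suppose $T,T'\subseteq \Pi$ are two subsets with $(S,T),(S,T')\in\SPA(\Pi)$. The SP-gradings of $\fP(S)$ determined by $T$ and $T'$ are isomorphic, by the convention of Subsection \ref{subsec:termgrade}, exactly when $\fP(S,T) \simgr \fP(S,T')$ as SP-graded pairs. By Theorem \ref{thm:SP}(iii), this last condition is equivalent to $(S,T)$ and $(S,T')$ lying in the same $\Aut(\Pi)$-orbit of $\SPA(\Pi)$, which finishes the proof.

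The main (mild) obstacle is purely bookkeeping: Theorem \ref{thm:SP}(ii) only produces a pair $(S',T')$ with $S'$ in the same orbit as $S$, not equal to $S$. The short calculation $(S,\ph(T'))\cdot \ph = (S',T')$ packages this normalization cleanly, and no further analysis is needed.
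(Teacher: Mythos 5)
Your proof is correct, and since the paper states this corollary without writing out a proof (it is presented as an immediate consequence of Theorem \ref{thm:SP}), your argument is exactly the intended derivation: apply Theorem \ref{thm:SP}(ii)--(iii), using Theorem \ref{thm:Kantor}(iii) and the computation $(S,\ph(T'))\cdot\ph=(S',T')$ to normalize the first component of the admissible pair to $S$ itself. The only detail worth keeping explicit is the one you already note, namely that ``isomorphic SP-gradings'' means graded-isomorphism of the corresponding graded pairs per Subsection \ref{subsec:termgrade}, and that $\SPA(\Pi)$ is stable under the right $\Aut(\Pi)$-action so that $(S,T)=(S',T')\cdot\ph^{-1}$ is again SP-admissible.
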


If $S\in \KA(\Pi)$, then, by
Proposition \ref{prop:path}(c),
$(S,\emptyset)$ and
$(S,S)$ are in $\SPA(\Pi)$.  Moreover,  the SP-gradings of $\fP(S)$ determined by
$\emptyset$ and $S$ are respectively  the zero SP-grading and the one SP-grading of $\fP(S)$.
Of course \emph{our main interest is in non-trivial  gradings of $\fP(S)$, which occur in Corollary \ref{cor:SPPS} when $T$ is not equal to $ \emptyset$ or $S$}.

\subsection{The close-to-Jordan case} \label{closeSP}

The classification of   SP-gradings has a particularly
simple description for  close-to-Jordan pairs.

\begin{theorem}
\label{thm:closeSP}  Suppose  $S\in \KA(\Pi)$  and $P = \fP(S)$
is close-to-Jordan. If $T \subseteq \Pi$, then
$(S,T)\in \SPA(\Pi)$ if and only if
\begin{equation} \label{eq:Tchar} T = \emptyset,\ T= S \text{ or } T =  \set{\lambda} \text{ for some } \lambda\in \Pi \text{ with } \chi_\lambda(\mu^+) = 1.
\end{equation}
Hence the SP-gradings of $\fP(S)$ are, up to isomorphism, precisely the SP-gradings determined by subsets of $\Pi$ of the form
\eqref{eq:Tchar}. Finally,
if $T$ and $T'$ are subsets of $\Pi$ of the form \eqref{eq:Tchar}, then $T$ and $T'$ determine isomorphic SP-gradings
of $\fP(S)$ if and only if there exists $\varphi\in \Aut(\Pi)$ such that $T' = T \cdot \varphi$.
\end{theorem}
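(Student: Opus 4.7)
The plan is to apply Proposition \ref{prop:path}(c) together with the special form of $S$ provided by Theorem \ref{thm:KanSko}. Since $P = \fP(S)$ is close-to-Jordan, $S$ is precisely the set of nodes of $\Pi$ adjacent to $\mu^-$ in $\tPi$, and $\chi_S(\mu^+) = 2$. Writing $\mu^+ = \sum_{\lm \in \Pi} n_\lm \lm$ with each $n_\lm \ge 1$ (by irreducibility of $\Sigma$), we have $\chi_T(\mu^+) = \sum_{\lm \in T} n_\lm$, and Proposition \ref{prop:path}(c) requires this sum to lie in $\set{0,1,2}$. The cases $\chi_T(\mu^+) = 0$ and $\chi_T(\mu^+) = 1$ immediately force $T = \emptyset$ and $T = \set{\lm}$ with $\chi_\lm(\mu^+) = 1$ respectively, by positivity of the $n_\lm$.

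The main case is $\chi_T(\mu^+) = 2$, where the plan is to show $T = S$. Here Proposition \ref{prop:path}(c) imposes the additional condition $\comp(\tPi \sm T, \mu^-) \cap S = \emptyset$. Since every $\lm \in S$ is a neighbor of $\mu^-$ in $\tPi$ and $\mu^- \in \tPi \sm T$, any $\lm \in S \sm T$ would lie in $\comp(\tPi \sm T, \mu^-) \cap S$, a contradiction. Thus $S \subseteq T$, and then $\chi_{T \sm S}(\mu^+) = \chi_T(\mu^+) - \chi_S(\mu^+) = 0$ forces $T = S$, again by positivity of the $n_\lm$. The converse direction, that each listed $T$ lies in $\SPA(\Pi)$, follows routinely from Proposition \ref{prop:path}(c); for $T = S$ in particular, $\mu^-$'s neighborhood in $\tPi$ equals $S$, so $\comp(\tPi \sm S, \mu^-) = \set{\mu^-}$ does not meet $S$.

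The second statement is then immediate from the first and Corollary \ref{cor:SPPS}. For the third, Corollary \ref{cor:SPPS} says the gradings determined by $T$ and $T'$ are isomorphic precisely when $(S \cdot \ph, T \cdot \ph) = (S, T')$ for some $\ph \in \Aut(\Pi)$, so it suffices to prove that $S \cdot \ph = S$ holds automatically for every $\ph \in \Aut(\Pi)$. Indeed, such a $\ph$ extends to an element of $\Aut(\Sigma)$ preserving $\Pi$, hence preserving $\Sigma^+$, hence fixing the highest root $\mu^+$; combining $\chi_S(\lm) = \langle \lm, \mu^+ \rangle$ from Theorem \ref{thm:KanSko}(i) with the isometry property of $\ph$ yields $\ph(S) = S$, and hence $S \cdot \ph = S$. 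The main obstacle is the component argument in the case $\chi_T(\mu^+) = 2$, but it is quite direct once one unpacks the description of $S$ as the neighborhood of $\mu^-$ in $\tPi$; the remaining steps are essentially bookkeeping.
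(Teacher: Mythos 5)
Your proposal is correct and follows essentially the same route as the paper: both reduce the first statement to Proposition \ref{prop:path}(c) plus the description of $S$ from Theorem \ref{thm:KanSko}, handle the case $\chi_T(\mu^+)=2$ by noting that any $\nu\in S\sm T$ would give a path $\mu^-,\nu$ in $\tPi\sm T$ forcing $S\subseteq T$ and hence $T=S$, and then derive the last two statements from Corollary \ref{cor:SPPS} together with the fact that $\Aut(\Pi)$ stabilizes $S$. The only cosmetic difference is that you justify $S\cdot\ph=S$ via $\ph(\mu^+)=\mu^+$ and $\chi_S=\langle\,\cdot\,,\mu^+\rangle$, whereas the paper observes that $\ph$ fixes $\mu^-$; both are fine.
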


\begin{proof}  Recall that   $X_\rkn \ne A_1$, $S$ is the set of nodes of $\Pi$ that are adjacent to $\mu^-$ in $\tPi$ and
$\chi_S(\mu^+) = 2$ (see Theorem \ref{thm:KanSko}).
If $T$ of the  form \eqref{eq:Tchar}, then  $(S,T)\in \SPA(\Pi)$ by Proposition   \ref{prop:path}(c).  For the converse,
suppose $T \subseteq \Pi$ with $(S,T)\in \SPA(\Pi)$, and suppose
$T \ne \emptyset$ and $T \ne S$.  Then it suffices to show that  $\chi_T(\mu^+) = 1$, so we suppose the
contrary. Hence
$\mu_T(\mu^+) = 2$.
But if $\nu \in S\setminus T$, then $\mu^-,\nu$ is a path in $\tPi\setminus T$,
so  $\nu \in \comp(\tPi,\mu^-)\cap S$, which is empty by Proposition \ref{prop:path}(c).  Therefore $S \subseteq T$.
So since $\chi_T(\mu^+) = 2 = \chi_S(\mu^+)$,
we have  $T = S$.  With this contradiction we have proved the first statement.
The second and third statements now follow by Corollary \ref{cor:SPPS} and the fact that each  $\varphi\in \Aut(\Pi)$  fixes~$\mu^-$.
\end{proof}

Looking at the explicit expression for $\mu^+$ for each type, we see the following:

\begin{corollary} \label{cor:noGFE}  Suppose that $P$ is the simple close-to-Jordan Kantor pair
of type $X_\rkn \ne A_1$.  If  $X_\rkn = \type{G}{2}$, $\type{F}{4}$ or $\type{E}{8}$, then
$P$ does not have a non-trivial SP-grading.  For the other types,
the number of  isomorphism classes of non-trivial SP-gradings of $P$ is $\lfloor \frac {n+1}2 \rfloor$
if $X_n = \type{A}{n} (n \ge 2)$, $2$ if
$X_n = \type{D}{n} (n \ge 5)$  and $1$ if $X_n= \type{B}{n} (n \ge 2)$,
$\type{C}{n} (n \ge 3)$, $\type{D}{4}$, $\type{E}{6}$ or~$\type{E}{7}$.
\end{corollary}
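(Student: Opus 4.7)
The plan is to combine Theorem \ref{thm:closeSP} with the explicit expressions for the highest root $\mu^+$ in each Dynkin type, and then do a short orbit count under $\Aut(\Pi)$.

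By Theorem \ref{thm:closeSP}, the isomorphism classes of SP-gradings of $P$ are in bijection with the $\Aut(\Pi)$-orbits on the set of subsets of $\Pi$ of the form \eqref{eq:Tchar}. The two trivial SP-gradings correspond to $T=\emptyset$ and $T=S$, each of which is already fixed by $\Aut(\Pi)$, so the non-trivial SP-gradings, up to isomorphism, are in bijection with the $\Aut(\Pi)$-orbits on the set
\[
\Pi_1 \ := \ \{\lambda \in \Pi \suchthat \chi_\lambda(\mu^+) = 1\},
\]
via the bijection $\lambda \mapsto \{\lambda\}$. Thus the corollary reduces to computing $|\Pi_1 / \Aut(\Pi)|$ in each type, for which I would consult the Bourbaki plates \cite[VI, Planches I--IX]{Bour} to read off the coefficients of $\mu^+$ and the automorphism group of $\Pi$.

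I would then proceed case by case. In type $\type{A}{n}$ ($n\ge 2$) all coefficients of $\mu^+$ equal $1$, so $\Pi_1 = \Pi$ has $n$ elements, and $\Aut(\Pi) \cong \bbZ/2$ acts by reversing the diagram; the number of orbits is $\lfloor (n+1)/2\rfloor$. In types $\type{B}{n}$ ($n\ge 2$) and $\type{C}{n}$ ($n\ge 3$), $\mu^+$ has a unique coefficient equal to $1$ and $\Aut(\Pi)$ is trivial, giving a single orbit. In type $\type{D}{n}$ ($n\ge 4$), the three nodes with coefficient $1$ in $\mu^+$ are the end node of the long tail and the two branch ends; for $n=4$ the automorphism group $S_3$ permutes them transitively (one orbit), while for $n\ge 5$ the automorphism group $\bbZ/2$ fixes the end of the tail and swaps the two branch ends (two orbits). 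In type $\type{E}{6}$, the two nodes with coefficient $1$ are swapped by the nontrivial diagram automorphism, yielding one orbit; in $\type{E}{7}$, a unique coefficient of $\mu^+$ equals $1$ and $\Aut(\Pi)$ is trivial, yielding one orbit. Finally, for $\type{G}{2}$, $\type{F}{4}$, $\type{E}{8}$, the coefficients of $\mu^+$ are $(3,2)$, $(2,3,4,2)$, and $(2,3,4,5,6,4,3,2)$ respectively, so $\Pi_1 = \emptyset$ and there are no non-trivial SP-gradings.

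There is no real obstacle here beyond the bookkeeping of the case analysis; the one spot that requires a little care is the $\type{A}{n}$ count, where one checks directly that the $\bbZ/2$ action on an $n$-element chain has $\lfloor (n+1)/2\rfloor$ orbits (fixing the middle node when $n$ is odd).
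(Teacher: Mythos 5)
Your proposal is correct and is exactly the paper's intended argument: the paper derives the corollary from Theorem \ref{thm:closeSP} by "looking at the explicit expression for $\mu^+$ for each type," and your write-up simply fills in the orbit count under $\Aut(\Pi)$ that the paper leaves implicit. The case-by-case coefficients and orbit counts you give all check out against the Bourbaki plates.
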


See Example \ref{ex:E6SP}(ii) below for an  illustration of Theorem \ref{thm:closeSP}
and Corollary \ref{cor:noGFE} in type $\type{E}{6}$.

\subsection{Marked Dynkin diagrams for SP-gradings}
\label{subsec:markedSP}
\emph{We represent an element $(S,T)\in \SPA(\Pi)$
by the Dynkin diagram for $\Pi$  with the nodes in $S$ marked with a circle
and the nodes in $T$ marked with an asterisk; and we use
the same marked Dynkin diagram
to represent the SP-graded Kantor pair $\fP(S,T)$  as well as
the SP-grading of $\fP(S)$ determined by $T$}.

\begin{example}(Type $\type{E}{6}$)  \label{ex:E6SP}
Suppose  that $\Pi = \set{\mu_1,\dots,\mu_6}$ is of type $\type{E}{6}$ with  Dynkin
diagram as in Example \ref{ex:E6KP}.
Recall  that in Example \ref{ex:E6KP} we saw that
(up to the right action of $\Aut(\Pi)$)  there are four Kantor-admissible subsets $S$ of~$\Pi$:
(i) $\set{\mu_1}$, (ii) $\set{\mu_6}$, (iii) $\set{\mu_2}$, and (iv) $\set{\mu_1,\mu_5}$.
We  now use
Proposition \ref{prop:path}(c) and the extended diagram \eqref{eq:E6extended} for $\type{E}{6}$ to list for each choice of $S$ the marked diagrams that
represent,  up to the right action of $\Aut(\Pi)$, the
SP-admissible pairs of the form $(S,T)$ with $T\ne \emptyset$ and~$T \ne S$:
\allowdisplaybreaks
\begin{alignat*}{10}
(i)&: &&\ \type{E}{6}(16,0,8)\  && \clearlabels \Satrue \Tetrue \Esix \\
(ii)&: &&\ \type{E}{6}(20,1,10)\ && \clearlabels \Sftrue \Tatrue \Esix \\
(iii)&: &&\ \type{E}{6}(20,5,10)\  && \clearlabels \Sbtrue \Tatrue \Esix  \qquad
	&&  \type{E}{6}(20,5,8)\ &&\clearlabels \Sbtrue \Tftrue \Esix\\
&&&\ \ \type{E}{6}(20,5,12)\   && \clearlabels \Sbtrue \Tetrue \Esix \\
(iv)&:  &&\ \type{E}{6}(16,8,8a)\ &&  \clearlabels \Satrue \Setrue \Tetrue \Esix
           &&\type{E}{6}(16,8,8b)\   && \clearlabels \Satrue \Setrue \Tftrue \Esix
\end{alignat*}
Hence, for each~$S$,  Corollary \ref{cor:SPPS} tells us that the non-trivial SP-gradings
on $\fP(S)$ are represented up to  isomorphism by the listed marked
diagrams.
Thus, up to graded-isomorphism, each simple
SP-graded Kantor pair of type $\type{E}{6}$ with non-trivial grading is
represented by exactly one of the above   seven marked diagrams.

Note that each marked diagram representing
$(S,T)$ and $\fP(S,T)$  in (i), (ii) and (iii) is labelled as $\type{E}{6}(d,e,f)$, where $d$ is the balanced dimension of $\fP(S,T)$ and $e$ is the balanced $2$-dimension of $\fP(S,T)$
as in Example \ref{ex:E6KP}, and where $f$ is the balanced dimension of  $\fP(S,T)_1$.
($f$   was computed using Remark   \ref{rem:SP}(ii) and a list of
roots in $\Sigma$ \cite[Plate V]{Bour}.)
In (iv), we have used the notations
$\type{E}{6}(16,8,8a)$ and $\type{E}{6}(16,8,8b)$  because there are two
graded pairs with parameters 16,8,8  that are not  graded-isomorphic.
In any of the cases, we will sometimes use the label for the SP-graded Kantor pair $\fP(S,T)$ itself.
\end{example}

For each of the other types $\type{X}{\rkn}$, it is not difficult using the same
method to write down marked Dynkin diagrams representing up to graded-isomorphism the  simple
SP-graded Kantor pairs of type $\Xn$ with non-trivial gradings.
We leave this  to the interested reader as an   exercise.

\section{Weyl images of finite dimensional simple SP-graded Kantor pairs}
\label{sec:Weylcompute}

\emph{We  continue  with the assumptions  and notation of  Section \ref{sec:SPfd}.}
In this section (see Theorems \ref{thm:uPST} and \ref{thm:uST}), we compute the marked Dynkin diagram that represents the SP-graded Kantor
pair  $\p{u}\fP(S,T)$ for each $(S,T) \in \SPA(\Pi)$ and each $u\in \WD$.
In view of Theorem \ref{thm:SP}, this  computes
all  Weyl images of all   finite dimensional simple SP-graded Kantor pairs.

\subsection{The maps \protect $w_X$, $\sg_X$ and $\tilde w_X$ \protect}
\label{subsec:longWeyl}
Suppose that $X\subseteq \Pi$.

Let
\[E_{X} = \spann_\bbR(X),\quad \Sigma_{X} = \Sigma \cap E_X  \andd \Sigma^+_{X} = \Sigma^+ \cap E_X.\]
If $X \ne \emptyset$, then $\Sigma_X$ is a root system with base $X$ in the Euclidean space $E_{X}$,
and $\Sigma^+_{X}$ is the set of positive roots in $\Sigma_X$ relative to $X$.  We then use
the simplified notation
\[Q_X = Q_{\Sigma_X} \andd W_X = W_{\Sigma_X}\]
respectively for the root lattice and Weyl group of $\Sigma_X$.
If $X = \emptyset$, then  $E_{X} = \set{0}$,  $\Sigma_X = \emptyset$, $\Sigma^+_{X} = \emptyset$, and we set
 $Q_X = \set{0}$ and $W_X = \set{1}$.

If $X\ne \emptyset$, we let $w_X$ be the unique element  of $W_X$ such that
\[w_X(\Sigma_X^{+})=-\Sigma_X^{+}.\]
Equivalently, $w_X$ is the longest element of
$W_X$ relative to the base $X$
\cite[VI, \S1.6, Cor.~3]{Bour}.  Clearly $-w_X$ stabilizes $X$ and
we set
\[\sg_X = -w_X \in \Aut(X),\]
where recall that $\Aut(X)$ is the group of diagram automorphisms of $X$.
If $X  = \emptyset$, we adopt the conventions that  $w_X = 1$ and $\sg_X = 1$.
Evidently in all cases $w_X^2 = 1$ and $\sg_X^2 = 1$.

If $X\ne \emptyset,$ it is well known that the map $\sg_X$ can be read from the Dynkin diagram for $X$.
Indeed, if $X$ is connected, then  $\sg_X$ is the
nontrivial diagram automorphism of $X$ if $X$ has type
$\type{A}{\rkn}$ with $\rkn \geq 2$, $\type{D}{\rkn}$ with $\rkn$ odd $\ge 5$, or $\type{E}{6}$; and $\sg_X=1$
for all other  types
\cite[VI, \S4.5--\S4.13]{Bour}.  Furthermore,
if $X$ has  connected components $X_{i}$, $1\leq
i\leq r$, then $\sg_X$ stabilizes each $X_i$ and $\sg_X|_{X_i} = \sg_{X_i}$.

If $w\in W_X$, then  $w$ extends uniquely to an isometry $\tilde{w}$ of $E$ with $\tilde{w}(\tau)=\tau$
for $(\tau,E_{X})=0$.  It is clear that the map $\ph \mapsto \tilde\ph$ is a monomorphism
of $W_X$ into $W_\Sigma$.

We will abuse notation and write $\widetilde{w_X}$ as $\tilde w_X$.  Note that if
$\lm \in E$, we have
\begin{equation}
\label{eq:wX}
\tilde w_X(\lm) \in \lm + Q_X.
\end{equation}

\subsection{The left action \protect $*$ of $W_\Delta$ on $\SPA(\Pi)$ \protect}
\label{subsec:*action}
Recall from Section  \ref{subsec:*} that we have left action $*$ of $W_\Delta = \Aut(\Delta)$ on  $\Hom_\psh(\Sigma,\BCtwo)$.
We use the bijection $(S,T) \mapsto \chiST$ in
Proposition \ref{prop:SPadmit} to transfer this action to a \emph{left action
$*$ of $W_\Delta$ on $\SPA(\Pi)$}.  So by definition we have for $u\in W_\Delta$ and $(S,T)\in \SPA(\Pi)$ that
\begin{equation} \label{eq:left2}
\chi_{u*(S,T)} = u*\chiST. \end{equation}
Using
\eqref{eq:philmST} and \eqref{eq:left2}, we see that the left action  $*$ of $W_\Delta$ on $\SPA(\Pi)$
commutes with the right action $\cdot$ of $\Aut(\Pi)$ on the same set.

\subsection{Weyl images of \protect $\fP(S,T)$ \protect}
\label{subsec:WPST}

\begin{theorem} \label{thm:uPST}
Suppose that $(S,T)\in \SPA(\Pi)$.  Then
$\p{u}{\fP(S,T)}\simgr \fP(u*(S,T))$ for $u\in W_\Delta$.
\end{theorem}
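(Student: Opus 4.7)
The plan is to chase through the definitions, obtaining the desired graded isomorphism on the level of enveloping $\BCtwo$-graded Lie algebras and then transferring it back down. By Construction \ref{con:SP}, $\fP(S,T)$ is the SP-graded Kantor pair enveloped by $\cG(\chiST)$, so the definition of the $u$-image recalled in Subsection \ref{subsec:SPWeyl} shows that $\p{u}\fP(S,T)$ is enveloped by $\p{u}\cG(\chiST)$. Equation \eqref{eq:left1} identifies $\p{u}\cG(\chiST)$ with $\cG(u\cdot\chiST)$ as a $\BCtwo$-graded Lie algebra. On the other hand, again by Construction \ref{con:SP} together with \eqref{eq:left2}, $\fP(u*(S,T))$ is enveloped by $\cG(\chi_{u*(S,T)}) = \cG(u*\chiST)$.

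The key step is then to show that $\cG(u\cdot\chiST)$ and $\cG(u*\chiST)$ are graded-isomorphic as $\BCtwo$-graded Lie algebras. By the definition of the action $*$ given in Subsection \ref{subsec:*}, the element $u*\chiST$ lies in $u\cdot\chiST\cdot W_\Sigma$, so $u*\chiST = u\cdot\chiST\cdot w$ for some $w \in W_\Sigma$; Proposition \ref{prop:Zngrading}(ii), applied with $G = Q_\Delta$, then yields the desired graded isomorphism.

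To finish, I would pass from the graded-isomorphic enveloping Lie algebras back to the SP-graded Kantor pairs they envelop. By Theorem \ref{thm:SP}(i), both $\fP(S,T)$ and $\fP(u*(S,T))$ are simple, and Weyl images of simple SP-graded Kantor pairs are simple (as noted at the end of Subsection \ref{subsec:SPWeyl}), so $\p{u}\fP(S,T)$ and $\fP(u*(S,T))$ are simple SP-graded Kantor pairs whose enveloping $\BCtwo$-graded Lie algebras have been shown to be graded-isomorphic. By Proposition \ref{prop:BC2}, each such enveloping Lie algebra is graded-isomorphic to the standard $\BCtwo$-grading of the corresponding Kantor Lie algebra, so the characterization of graded isomorphism of SP-graded pairs recalled in Subsection \ref{subsec:BC2} gives $\p{u}\fP(S,T) \simgr \fP(u*(S,T))$. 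I do not anticipate any substantive obstacle: the theorem is essentially a compatibility check, with the genuine content residing in Proposition \ref{prop:Zngrading}(ii) (which lets us replace a homomorphism by a $W_\Sigma$-translate without changing the graded Lie algebra up to isomorphism) and in the setup of $*$ as the unique positive representative of a $W_\Sigma$-orbit.
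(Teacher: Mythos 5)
Your proposal is correct and follows essentially the same route as the paper: both reduce to the chain $\p{u}\cG(\chiST) = \cG(u\cdot\chiST) \simgr \cG(u\cdot\chiST\cdot w) = \cG(\chi_{u*(S,T)})$ via \eqref{eq:left1}, Proposition \ref{prop:Zngrading}(ii) and \eqref{eq:left2}, and then pass back to the enveloped pairs. Your concluding step (invoking simplicity, Proposition \ref{prop:BC2} and the characterization of graded isomorphism via Kantor Lie algebras) is a slightly more elaborate justification of what the paper dispatches in one sentence, but it is the same argument in substance.
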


\begin{proof} By definition of $*$, we have $u*\chiST = u \cdot \chiST \cdot w$
for some $w\in W_\Sigma$.  Then
\begin{align*} \p{u}\cG(\chiST) &= \cG(u\cdot \chiST) &&\text{by \eqref{eq:left1}}\\
&\simgr \cG(u\cdot \chiST\cdot w) &&\text{by Proposition \ref{prop:Zngrading}(ii)}\\
&= \cG(u* \chiST) = \cG( \chi_{u*(S,T)}) &&\text{by \eqref{eq:left2}}.
\end{align*}
 Since
$\p{u}\cG(\chiST)$ and $\cG( \chi_{u*(S,T)})$ determine
$\p{u}{\fP(S,T)}$ and $\fP(u*(S,T))$ respectively, we have our conclusion.
\end{proof}

Because of Theorem \ref{thm:uPST}. it is important to be able to compute
$u*(S,T)$ for $u\in W_\Delta$, $(S,T)\in \SPA(\Pi)$.  We do this in the next theorem, where
there is a case that requires special treatment, namely the case when:
\begin{equation}
\tag{ex}
\parbox{3.5in}{
$\Pi$ has type $\type{A}{\rkn}$, $\rkn \ge 3$, and there
exists distinct $\lm,\lm',\mu\in \Pi$  such that $S = \set{\lm,\lm'}$,
$T = \set{\mu}$, and $\lm'$ lies between $\lm$ and $\mu$ on the Dynkin diagram
for $\Pi$.
}
\end{equation}

Recall from Section \ref{subsec:SPWeyl}  that $W_\De = \set{1, s_1, s_2,  s_2s_1, -1,  -s_1, -s_2,  -s_2s_1}$.

\begin{theorem}
\label{thm:uST}  Suppose $(S,T)\in \SPA(\Pi)$.  Then  the elements $u*(S,T)$ for $u\in W_\Delta$
are given in the following table
\begin{equation}
\label{tab:uST}
\renewcommand{\arraystretch}{1.3}
\begin{tabular}[ht]
{c |   c }
\small  $u$ & $u*(S,T)$\\
\whline \rule{0ex}{3ex} $1$ & $(S,T)$\\
\cline{1-2} $s_1$ & $(\bS,T)$ \\
\cline{1-2} $s_2$ & $(S,\bar T)$\\
\cline{1-2} $s_2s_1$ & $(\bS,\bar T \cdot \sg_{\Pi})$\\
\cline{1-2} $-1$ & $(S\cdot \sg_{\Pi},T\cdot \sg_{\Pi})$\\
\cline{1-2} $-s_1$ & $(\bS\cdot \sg_{\Pi},T\cdot \sg_{\Pi})$\\
\cline{1-2} $-s_2$ & $(S\cdot \sg_{\Pi},\bar T\cdot \sg_{\Pi})$\\
\cline{1-2} $-s_2s_1$ & $(\bS\cdot \sg_{\Pi},\bar T )$
\end{tabular}\quad,
\end{equation}
where
\begin{equation} \label{eq:breveS}
\breve S =  \sg_{\Pi\sm T}(S\sm T) \ \bigcup \
\set{\lm \in T \suchthat \chi_{S\sm T}(\wPT(\lm)) + \chi_{S\cap T}(\lm) = 1}
\end{equation}
and
\begin{equation} \label{eq:shiftT}
\bar T
=  \sg_{\Pi\sm S}(T\sm S) \ \bigcup \
\set{\lm \in S\sm T \suchthat \supp_{\Pi}(\wPS(\lm)) \cap (T\sm S) = \emptyset}.
\end{equation}
(See Remark \ref{rem:brevebar} below about the notation  $\bS$ and $\bar T$.)
Moreover, we have the following expressions, which allow us to read $\breve S$ and $\bar T$
directly from the marked Dynkin diagram representing $(S,T)$ together with the expression
for the highest root $\mu^+$ in~$\Sigma$:
\begin{equation} \label{eq:breveSmore}
\bS :=
\begin{cases}
	S&\text{if } \chi_{S\sm T}(\mu^+) =0,\\
	\sg_{\Pi\sm T}(S\sm T) \cup(T\sm S)&\text{if } \chi_{S\sm T}(\mu^+) = 1,\\
	\sg_{\Pi\sm T}(S\sm T)&\text{if } \chi_{S\sm T}(\mu^+) = 2,
\end{cases}
\end{equation}
and
\begin{equation} \label{eq:shiftTmore}
\bar T :=
\begin{cases}
	S\sm T&\text{if } T\subseteq S,\\
	\set{\sg_{\Pi\sm S}(\mu),\lm}&\text{if \emph{(ex)} holds},\\
	\sg_{\Pi\sm S}(T\sm S)&\text{otherwise}.
\end{cases}
\end{equation}
\end{theorem}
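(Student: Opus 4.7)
The plan is to exploit the fact that $W_\Delta$ is generated by $s_1$, $s_2$, and $-1$.  Once the four rows of the table for $u\in \{1, s_1, s_2, -1\}$ are verified, the remaining four rows follow from the group law in $W_\Delta$, reducing to algebraic identities between the $\bS$ and $\bar T$ operations applied to different pairs; these identities are direct consequences of the explicit descriptions \eqref{eq:breveS} and \eqref{eq:shiftT}.  The $u = 1$ entry is definitional; for $u = -1$, I use the identity $w_\Sigma = -\sg_\Pi$ on $E_\Sigma$ to obtain
\[
\big((-1)\cdot \chi_{(S,T)}\cdot w_\Sigma\big)(\mu) \;=\; -\chi_{(S,T)}(-\sg_\Pi(\mu)) \;=\; \chi_{(S,T)}(\sg_\Pi(\mu)) \;=\; \chi_{(S\cdot\sg_\Pi,\, T\cdot\sg_\Pi)}(\mu),
\]
where the last equality uses \eqref{eq:philmST}.

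For $u = s_2$, I first compute the matrix of $s_2$ acting on $Q_\Delta = \bbZ^2$ to obtain $s_2 \cdot \chi_{(S,T)} = (\chi_S,\, \chi_S - \chi_T)$.  I will take $w = \tilde w_{\Pi\sm S}$ as the normalizing Weyl element, using that $W_{\Pi\sm S}$ fixes $\chi_S$ (its elements translate a root by an element of $Q_{\Pi\sm S}$, on which $\chi_S$ vanishes), so the first coordinate stays the positive $\chi_S$.  Evaluating the second coordinate $\chi_S - \chi_T\circ \tilde w_{\Pi\sm S}$ at $\lm\in\Pi$: for $\lm\in\Pi\sm S$, $\tilde w_{\Pi\sm S}(\lm) = -\sg_{\Pi\sm S}(\lm)\in -(\Pi\sm S)$, giving value $\chi_T(\sg_{\Pi\sm S}(\lm))\in\{0,1\}$; for $\lm\in S$, $\tilde w_{\Pi\sm S}(\lm)\in \Sigma^+$ has $\lm$-coefficient $1$ and satisfies $\chi_T(\tilde w_{\Pi\sm S}(\lm))\leq 1$, for otherwise $\chi_{(S,T)}(\tilde w_{\Pi\sm S}(\lm)) = (1,2)$ would contradict $\SPA$-admissibility by \eqref{eq:HomBC2}.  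Reading off which $\lm$ yield value $1$ recovers \eqref{eq:shiftT}.  The case $u = s_1$ is parallel with $w = \tilde w_{\Pi\sm T}$ (the case $T = \emptyset$ being trivial); the new subtlety is that for $\lm\in T$ one needs $\chi_S(\tilde w_{\Pi\sm T}(\lm))\geq 1$, but this lower bound is automatic because the positive representative lies in $\Hom_\psh(\Sigma,\BCtwo)$ by Proposition \ref{prop:SPadmit} and so has coordinates in $\{0,1\}$ at simple roots. Reading off the values then recovers \eqref{eq:breveS}.

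The alternative formulas \eqref{eq:breveSmore} and \eqref{eq:shiftTmore} must be derived from \eqref{eq:breveS} and \eqref{eq:shiftT} by case analysis.  For \eqref{eq:shiftTmore}: the case $T\subseteq S$ yields $T\sm S = \emptyset$, so the support condition in \eqref{eq:shiftT} is vacuous and $\bar T = S\sm T$; in the exceptional case (ex), an explicit calculation in type $A_n$ (where the $W_X$-orbit of a simple root consists of sums of consecutive simple roots in the component containing $\lm$) shows that $\lm'\in S$ blocks $\tilde w_{\Pi\sm S}(\lm)$ from picking up $\mu$, so the support condition is satisfied; in the ``otherwise'' case, Lemma \ref{lem:connected} together with the structure of the highest root in the $\{\lm\}\cup(\Pi\sm S)$-subdiagram forces $\supp_\Pi(\tilde w_{\Pi\sm S}(\lm))$ to meet $T\sm S$ for every $\lm\in S\sm T$.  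Formula \eqref{eq:breveSmore} is handled by an analogous case analysis on $\chi_{S\sm T}(\mu^+)\in\{0,1,2\}$, which controls how far the $W_{\Pi\sm T}$-orbit of a $T$-node extends through $\Pi\sm T$ before reaching an element of $S\sm T$.  I expect the main obstacle to be the case analysis for \eqref{eq:shiftTmore}, specifically isolating the exceptional case (ex) from the generic ``otherwise'' case, which requires a delicate combinatorial analysis of Weyl-group orbits of simple roots in type $A_n$.
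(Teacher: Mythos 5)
Your treatment of the table \eqref{tab:uST} is essentially the paper's: the same matrix computations giving $s_1\cdot\chiST=(2\chi_T-\chi_S,\chi_T)$ and $s_2\cdot\chiST=(\chi_S,\chi_S-\chi_T)$, the same normalizing Weyl elements $\wPT$ and $\wPS$, the same positivity checks (including the use of $(1,2)\notin\chiST(\Sigma)$), the identity $w_\Pi=-\sg_\Pi$ for $u=-1$, and the a posteriori appeal to Proposition \ref{prop:SPadmit} to read off the new pair. The one soft spot in this part is the row for $s_2s_1$: your claim that the composite rows reduce to identities that are ``direct consequences of the explicit descriptions'' is only true for $-s_1$, $-s_2$ and $-s_2s_1$. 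For $s_2s_1$ itself you would need to show that the bar operation applied to the pair $(\bS,T)$ returns $\bar T\cdot\sg_\Pi$, which means re-evaluating \eqref{eq:shiftT} with $\bS$ in place of $S$ (different $\Pi\sm S$, different $\sg_{\Pi\sm S}$) --- not routine. The paper sidesteps this entirely by computing $(s_2s_1)*(S,T)$ in two ways: as $s_2*(s_1*(S,T))$, which pins down the first component as $\bS$, and as $(-1)*(s_1*(s_2*(S,T)))$ via $s_2s_1=-s_1s_2$, which pins down the second component as $\bar T\cdot\sg_\Pi$. You should adopt this trick or supply the missing direct verification.

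The genuine gap is in the passage from \eqref{eq:breveS} to \eqref{eq:breveSmore}. Your sketch (``case analysis on $\chi_{S\sm T}(\mu^+)$, which controls how far the orbit extends'') does not contain the fact that makes the three-case formula work, namely that $\chi_{S\sm T}(\wPT(\lm))=\chi_{S\sm T}(\mu^+)$ for \emph{every} $\lm\in T$ (Lemma \ref{lem:lmvalues2}); without this, the defining condition $\chi_{S\sm T}(\wPT(\lm))+\chi_{S\cap T}(\lm)=1$ cannot be evaluated uniformly in $\lm$ and collapsed into the cases $\chi_{S\sm T}(\mu^+)\in\set{0,1,2}$. This constancy is a nontrivial consequence of SP-admissibility: one climbs from $\wPT(\lm)$ to $\mu^+$ through a chain of positive roots, shows the first simple root added must lie in $T$, deduces $\chi_T=2$ on every subsequent root of the chain, and then uses $(1,2)\notin\chiST(\Sigma)$ to conclude that none of the added simple roots lies in $S\sm T$. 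For \eqref{eq:shiftTmore} you are closer to the mark, but the clean input you need is $\supp_\Pi(\wPS(\lm))=\comp\big((\Pi\sm S)\cup\set{\lm},\lm\big)$ (Lemma \ref{lem:lmvalues3}), which converts the support condition in \eqref{eq:shiftT} into a connectivity condition; after that, isolating case (ex) is, as you anticipate, a contradiction argument using Proposition \ref{prop:path}(c) and a type-by-type inspection of $\mu^+$ showing that a two-element $S$ with a disconnecting node forces type $\type{A}{n}$.
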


\begin{proof}  We postpone the proofs of \eqref{eq:breveSmore} and
\eqref{eq:shiftTmore} until  Section \ref{subsec:bSbT}, since these proofs require some additional information about the Weyl group
elements $\wPS$ and $\wPT$.  We  prove the first statement here.

If $\lm\in \Pi$, we have
\[((-1)\cdot \chiST \cdot w_{\Pi})(\lm) = -(\chi_S(w_{\Pi}(\lm)),\chi_T(w_{\Pi}(\lm))) =
 (\chi_S(\sg_{\Pi}(\lm)),\chi_T(\sg_{\Pi}(\lm)),\]
which has non-negative entries.  Hence,
$(-1)*\chiST = (-1)\cdot\chiST\cdot w_{\Pi} = \chiST \cdot \sg_{\Pi} = \chi_{(S,T)\cdot \sg_{\Pi}}$
using \eqref{eq:philmST}.  So using \eqref{eq:left2},
$(-1)*(S,T) = (S,T)\cdot \sg_{\Pi} = (S\cdot \sg_{\Pi}, T\cdot \sg_{\Pi})$.
This establishes row 5 (not counting the header row) of \eqref{tab:uST}, and we see that rows  6, 7 and 8
follow respectively from rows 2,  3 and 4.  Thus it is sufficient to establish rows 2, 3 and 4.

\emph{Row 2:}  Let $\rho = (\rho_1,\rho_2) = s_1 \cdot \chiST \cdot \wPT\in \Hom_\sh(\Sigma,\BCtwo)$.
Recall that we are identifying $Q_\De = \bbZ^2$ using the $\bbZ$-basis $\Pd = \set{\al_1,\al_2}$ for $Q_\De$.
So
\begin{align*}
s_1 \cdot \chiST(\lm) &= s_1(\chi_S(\lm) \al_1 + \chi_T(\lm) \al_2) =
\chi_S(\lm) (-\al_1) + \chi_T(\lm) (2\al_1 + \al_2)\\
&=(2\chi_T(\lm) - \chi_S(\lm),\chi_T(\lm)).
\end{align*}
for $\lm\in \Sigma$.  Also,  if $\lm \in \Pi$, we have
$\chi_T(\wPT(\lm))  = \chi_T(\lm)$ by \eqref{eq:wX}.
  Thus
\begin{equation} \label{eq:rho12a}
\rho_1(\lm) = 2\chi_T(\lm) - \chi_S(\wPT(\lm)) \andd \rho_2(\lm) = \chi_T(\lm)
\end{equation}
for $\lm\in \Pi$.

We check next using \eqref{eq:rho12a}
that $\rho$ is positive; that is $\rho_1(\lm) \ge 0$ and $\rho_2(\lm) \ge 0$ for $\lm\in \Pi$.
First if $\lm \in T$, we have
$\rho_2 (\lm) =   1$.
But since $\rho\in \Hom(\Sigma,\BCtwo)$, we have $\rho_1(\mu)\rho_2(\mu) \ge 0$ for
$\mu\in \Sigma$.  So $\rho_1(\lm) \ge 0$ for $\lm\in T$.
Next, if $\lm \in \Pi\sm T$, we have $\rho_2(\lm) = 0$ and
$\rho_1(\lm) =  - \chi_S(\wPT(\lm))  =  \chi_S(\sg_{\Pi\sm T}(\lm)) \ge 0$.
So $\rho$ is positive as desired.

Consequently we have $s_1 * \chiST = \rho$, so $\chi_{s_1*(S,T)} = \rho$.
Thus  $s_1*(S,T) =  (\breve S, \breve T)$, where
\begin{equation}\label{eq:rho12b}
\breve S = \set{\lm\in \Pi \suchthat \rho_1(\lm) = 1} \andd
\breve T = \set{\lm\in \Pi \suchthat \rho_2(\lm) = 1}.
\end{equation}
It follows now from \eqref{eq:rho12a} that $\breve T = T$, so it remains establish
\eqref{eq:breveS}.

Now   $\breve S = (\breve S \sm T) \cup (\breve S \cap T)$.
Moreover, using \eqref{eq:rho12a} and \eqref{eq:rho12b},
\[\breve S \sm T =  \set{\lm \in \Pi\sm T \suchthat \chi_S(\sg_{\Pi\sm T} (\lm)) = 1}  = \sg_{\Pi\sm T} (S\sm T)\]
and
\[\breve S \cap T =  \set{\lm \in T \suchthat 2 - \chi_S(\wPT (\lm)) = 1} =  \set{\lm \in T \suchthat \chi_S(\wPT (\lm)) = 1}.\]
But if  $\lm \in T$, then
\[\chi_S(\wPT (\lm)) = \chi_{S\sm T}(\wPT (\lm)) + \chi_{S\cap T}(\wPT (\lm)) =   \chi_{S\sm T}(\wPT (\lm)) + \chi_{S\cap T}(\lm)\]
using   \eqref{eq:wX}.  So we have \eqref{eq:breveS}.

\emph{Row 3:}  Let $\tau = (\tau_1,\tau_2) = s_2 \cdot \chiST \cdot \wPS\in \Hom_\sh(\Sigma,\BCtwo)$.
Now
\begin{align*}
s_2 \cdot \chiST(\lm) &= s_2(\chi_S(\lm) \al_1 + \chi_T(\lm) \al_2) =
\chi_S(\lm) (\al_1+\al_2) + \chi_T(\lm) (-\al_2)\\
&=(\chi_S(\lm),\chi_S(\lm)-\chi_T(\lm)).
\end{align*}
for $\lm\in \Sigma$. Also if $\lm \in \Pi$, $\chi_S(\wPS(\lm))  = \chi_S(\lm)$  by \eqref{eq:wX}.
Thus
\begin{equation} \label{eq:rho12c}
\tau_1(\lm) = \chi_S(\lm) \andd \tau_2(\lm) = \chi_S(\lm)-\chi_T(\wPS(\lm))
\end{equation}
for $\lm\in \Pi$.  Arguing as in Row 2 we now easily see that $\tau$ is positive, $\chi_{s_2*(S,T)} = \tau$ and
$s_2*(S,T) =  (\bar S, \bar T)$, where
\begin{equation*}
\bar S = \set{\lm\in \Pi \suchthat \tau_1(\lm) = 1} \andd
\bar T = \set{\lm\in \Pi \suchthat \tau_2(\lm) = 1}.
\end{equation*}
It follows then from \eqref{eq:rho12c} that $\bar S = S$, so it remains to prove \eqref{eq:shiftT}.
Again arguing as in Row 2, we  easily see that
\begin{equation*}
\bar T =  \sg_{\Pi\sm S}(T\sm S) \ \bigcup \
\set{\lm \in S \suchthat \chi_{T\sm S}(\wPS(\lm)) + \chi_{T\cap S}(\lm) = 0}.
\end{equation*}
Finally, let $\lm \in S$.  Then, since $\wPS(\lm) \in \lm + Q_{\Pi\sm S}$, we see that
$\wPS(\lm)$ is positive and hence $\chi_{T\sm S}(\wPS(\lm))\ge 0$.
Thus $\chi_{T\sm S}(\wPS(\lm)) + \chi_{T\cap S}(\lm) = 0$ if and only if
$\lm \in S\sm T$ and $\chi_{T\sm S}(\wPS(\lm)) = 0$, which holds if and only if
$\lm \in S\sm T$ and $\supp_{\Pi}(\wPS(\lm)) \cap (T\sm S) = \emptyset$.

\emph{Row 4:} Using Row 2 and Row 3 (applied to $(\breve S, T)$), we have
\[(s_2s_1)*(S,T) =s_2* (s_1*(S,T)) = s_2*(\breve S, T) = (\breve S, T')\] for some subset $T'$ of $\Pi$.
On the other hand, $s_2 s_1=  - s_1 s_2$.  So, using Row 3, Row 2 (applied to $(S,\bar T)$) and Row 5 (applied to  $(S',\bar T)$), we have
\begin{align*}
(s_2s_1)*(S,T) &= (-1)*(s_1* (s_2*(S,T))) \\
&= (-1)*(s_1* (S,\bar T)) = (-1)*( S',\bar T) =
( S'\cdot \sg_{\Pi},\bar T\cdot \sg_{\Pi})
\end{align*}
for some subset $S'$ of $\Pi$.  Combining these equalities gives our conclusion.
\end{proof}

If $(S,T)\in \SPA(\Pi)$, then we see using  Theorems \ref{thm:uPST} and \ref{thm:uST}  that
\[\fP(S,T)^\op = \p{-1}{\fP(S,T)}\simgr \fP((-1)*(S,T)) = \fP(S\cdot \sg_{\Pi},T\cdot \sg_\Pi) \simgr \fP(S,T).\]
This together with Theorem \ref{thm:SP}(ii) gives another proof  of
Proposition \ref{prop:Pop=Pgr}.

The following corollary, which we state for emphasis and convenience of reference, follows taking
$u= s_1$ and $u=s_2$ in Theorems \ref{thm:uPST} and   \ref{thm:uST}.

\begin{corollary} \label{cor:uST2}
If $(S,T)\in \SPA(\Pi)$ and $\bS$ and $\bar T$ are given by \eqref{eq:breveSmore} and \eqref{eq:shiftTmore}
(or  \eqref{eq:breveS} and \eqref{eq:shiftT}),  then
\[ \fP(S,T)\brv \simgr \fP(\bS,T) \andd \overline{\fP(S,T)} \ {\simgr}\ \fP(S,\bar T).\]
\end{corollary}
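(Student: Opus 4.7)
The plan is to derive the corollary directly from Theorems \ref{thm:uPST} and \ref{thm:uST} by specializing to the two generating reflections $u = s_1$ and $u = s_2$ of $\WD$. Since by the definitions recalled in Subsection \ref{subsec:SPWeyl} we have $\fP(S,T)\brv = \p{s_1}\fP(S,T)$ and $\overline{\fP(S,T)} = \p{s_2}\fP(S,T)$, only a single substitution step is needed in each case. No new ideas or computations enter the proof beyond the specialization.

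First I would invoke Theorem \ref{thm:uPST} with $u = s_1$ to obtain the graded isomorphism $\p{s_1}\fP(S,T) \simgr \fP(s_1 * (S,T))$, and then read off from row 2 of the table \eqref{tab:uST} that $s_1 * (S,T) = (\bS, T)$, where $\bS$ is the set described equivalently by \eqref{eq:breveS} and \eqref{eq:breveSmore}. Combining these two facts gives exactly $\fP(S,T)\brv \simgr \fP(\bS, T)$, which is the first of the two displayed isomorphisms.

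Next I would repeat the same argument with $u = s_2$: Theorem \ref{thm:uPST} yields $\p{s_2}\fP(S,T) \simgr \fP(s_2 * (S,T))$, and row 3 of \eqref{tab:uST} gives $s_2 * (S,T) = (S, \bar T)$ with $\bar T$ as in \eqref{eq:shiftT} (equivalently \eqref{eq:shiftTmore}). Hence $\overline{\fP(S,T)} \simgr \fP(S, \bar T)$, which completes the proof.

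There is no real obstacle here, since all the substantive work has already been carried out in the proof of rows 2 and 3 of Theorem \ref{thm:uST}, together with the (postponed) verification of the equivalent forms \eqref{eq:breveSmore} and \eqref{eq:shiftTmore}. The only point that needs explicit mention is notational consistency, namely that the symbols $\bS$ and $\bar T$ in the statement of the corollary are precisely the ones produced by the relevant rows of the table, so that no ambiguity arises when invoking Theorem \ref{thm:uPST}.
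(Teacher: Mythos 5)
Your proposal is correct and coincides with the paper's own justification: the corollary is obtained exactly by specializing Theorems \ref{thm:uPST} and \ref{thm:uST} to $u=s_1$ and $u=s_2$ and reading off rows 2 and 3 of the table \eqref{tab:uST}. Nothing further is needed.
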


\begin{remark}\label{rem:brevebar} Corollary \ref{cor:uST2} explains
our choice of notation for $\bS$ and $\bar T$.  It should be noted however that this is a
(convenient) abuse of notation, since $\bS$ and $\bar T$ each depend on  \emph{both} $S$ and~$T$.
\end{remark}

\begin{example}[The trivial SP-gradings] 
Suppose that $S\in \KA(\Pi)$.  Recall from  Section
\ref{subsec:SPfd}
that the zero and one SP-gradings on $\fP(S)$ are determined by $\emptyset$ and $S$ respectively.
For these SP-gradings one can check easily using Corollary
\ref{cor:uST2}, \eqref{eq:breveSmore} and \eqref{eq:shiftTmore} that
\[\fP(S,\emptyset)\brv \simgr \fP(S,\emptyset), \qquad \fP(S,S)\brv \simgr \fP(S,S),\]
and
\[\overline{\fP(S,\emptyset)} \simgr \fP(S,S), \qquad \overline{\fP(S,S)} \simgr \fP(S,\emptyset).\]
\end{example}

The  computation of Weyl images is particularly simple in the close-to-Jordan case.

\begin{corollary} \label{cor:refclose}
Suppose that  $(S,T)\in \SPA(\Pi)$ and $P = \fP(S,T)$ is close-to-Jordan with non-trivial SP-grading.  Then
\[\fP(S,T)\brv \simgr \fP(\sg_{\Pi\setminus T}(S\setminus T),T) \andd \overline{\fP(S,T)} \simgr \fP(S,T).\]
\end{corollary}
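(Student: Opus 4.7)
My plan is to derive both graded-isomorphisms from Corollary \ref{cor:uST2}, combined with the structural information about close-to-Jordan SP-gradings given by Theorems \ref{thm:KanSko} and \ref{thm:closeSP}. First I unpack $(S,T)$: non-triviality of the grading forces $T = \{\lambda\}$ for some $\lambda \in \Pi$ with $\chi_\lambda(\mu^+) = 1$, while $\chi_S(\mu^+) = 2$. In particular $T$ is a singleton that is either contained in $S$ or disjoint from $S$.

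For the first isomorphism, Corollary \ref{cor:uST2} gives $\fP(S,T)\brv \simgr \fP(\bS, T)$, where $\bS$ is determined by \eqref{eq:breveSmore}. A direct computation of $\chi_{S\sm T}(\mu^+) = \chi_S(\mu^+) - \chi_{S\cap T}(\mu^+)$ yields the value $1$ when $T\subseteq S$ (so the middle line of \eqref{eq:breveSmore} applies, with $T\sm S = \emptyset$) and the value $2$ when $T\cap S = \emptyset$ (so the last line applies). In both cases the formula reduces to $\bS = \sg_{\Pi\sm T}(S\sm T)$, giving the first assertion.

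For the second isomorphism, Corollary \ref{cor:uST2} gives $\overline{\fP(S,T)} \simgr \fP(S, \bar T)$ with $\bar T$ from \eqref{eq:shiftTmore}, so by Theorem \ref{thm:SP}(iii) it suffices to exhibit $\ph \in \Aut(\Pi)$ fixing $S$ setwise and sending $\bar T$ to $T$. When $T\subseteq S$, necessarily $|S| = 2$, which by Remark \ref{rem:mu+}(i) forces type $\type{A}{n}$ and $S = \{\mu_1,\mu_n\}$; then $\bar T = S\sm T$ is the opposite endpoint of $\Pi$, and the reversal $\sg_\Pi$ supplies the required $\ph$. When $T\cap S = \emptyset$, condition (ex) from Theorem \ref{thm:uST} cannot hold in the close-to-Jordan context (vacuously when $|S|=1$, and otherwise because in type $\type{A}{n}$ with $S = \{\mu_1,\mu_n\}$ no endpoint of $\Pi$ lies between the other endpoint and an interior node), so the third line of \eqref{eq:shiftTmore} gives $\bar T = \sg_{\Pi\sm S}(T)$.

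It then remains to verify that $\sg_{\Pi\sm S}$ restricted to the admissible values of $\lambda$ is realised by an element of $\Aut(\Pi)$ fixing $S$. This is a brief case-by-case check, guided by the list of close-to-Jordan types with non-trivial SP-gradings in Corollary \ref{cor:noGFE}; for each of $\type{A}{n}$, $\type{B}{n}$, $\type{C}{n}$, $\type{D}{n}$, $\type{E}{6}$ and $\type{E}{7}$ one uses the explicit expression for $\mu^+$ to identify the connected components of $\Pi\sm S$ and to observe that either the component containing $\lambda$ has trivial $\sg$, or its nontrivial symmetry is realised by a diagram automorphism of $\Pi$ stabilising $S$ (for instance, the swap of the two forked nodes in type $\type{D}{n}$). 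The main obstacle is handling this last verification uniformly, but each individual case is short once the extended Dynkin diagram and the highest root are in hand.
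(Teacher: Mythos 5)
Your proof is correct and follows essentially the same route as the paper: both rest on Corollary \ref{cor:uST2} together with \eqref{eq:breveSmore} and \eqref{eq:shiftTmore}, using the structural facts from Theorems \ref{thm:KanSko} and \ref{thm:closeSP} to evaluate $\bS$ and $\bar T$, and then a diagram-automorphism argument for the shift. The only (minor) divergences are that the paper shortcuts the final step by invoking the uniqueness count in Corollary \ref{cor:noGFE} for all types except $\type{A}{n}$ and $\type{D}{n}$, reserving the ``$\sg_{\Pi\sm S}$ extends to $\Aut(\Pi)$'' argument for those two, whereas you verify the extension uniformly case by case; and the paper leaves the type-$\type{A}{n}$ subcase with $T\subseteq S$ to the reader, which you treat explicitly.
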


\begin{proof}  Recall  that
$X_\rkn \ne A_1$, $S$ is the set of
nodes of $\Pi$ that are adjacent to $\mu^-$ in $\tPi$, and $T = \set{\lambda}$ with $\lambda \in \Pi$ and $\chi_{\lambda}(\mu^+)=1$  (see Theorems
\ref{thm:KanSko}(ii) and  \ref{thm:closeSP}).
We assume that $\lambda$ is not an end node of
$\Pi$ if $\Xn = \type{A}{n}$, leaving the excluded case for the reader to check.
Now $\chi_S(\mu^+) = 2$ by Theorem \ref{thm:KanSko}(i).  Also $S \cap T = \emptyset$.  Indeed this holds by assumption if $\Xn = \type{A}{n}$, whereas it holds when
$\Xn \ne \type{A}{n}$ since in that case  $\card(S) = 1$ by Remark \ref{rem:mu+}(i).
Thus, we see from Corollary \ref{cor:uST2}, \eqref{eq:breveSmore} and \eqref{eq:shiftTmore}, that
$\fP(S,T)\brv \simgr \fP(\sg_{\Pi \sm T} (S \sm T),T)$ and
$\overline{\fP(S,T)} \simgr \fP(S,\sg_{\Pi \sm S} (T \sm S))$.
Finally, by uniqueness in Corollary \ref{cor:noGFE}, we have
$\overline{\fP(S,T)} \simgr \fP(S,T)$ if $\Xn \ne \type{A}{n}$ and $\Xn \ne \type{D}{n}$.
But if $\Xn = \type{A}{n}$ or $\Xn = \type{D}{n}$, $\sg_{\Pi \sm S}$ extends to an automorphism of $\Pi$, so $\overline{\fP(S,T)} \simgr \fP(S,T)$ by  Theorem  \ref{thm:closeSP}.
\end{proof}

\begin{example}(Type $\type{E}{6}$).
\label{ex:e6reflect}
Recall that in Example  \ref{ex:E6SP} we saw that there are, up to graded-isomorphism, seven
simple SP-graded Kantor pairs of type $\type{E}{6}$  whose gradings are non-trivial.
It is straightforward to apply Corollary \ref{cor:uST2},
together with \eqref{eq:breveSmore} and \eqref{eq:shiftTmore}, to calculate
the reflection and shift of each of these SP-graded Kantor pairs  up to graded-isomorphism.
One sees that
\begin{quote}
shifting exchanges $\type{E}{6}(20,5,8)$ and $\type{E}{6}(20,5,12)$;
\end{quote}
and that shifting fixes the other five graded Kantor pairs.
One also sees that
\begin{quote}
reflection exchanges $\type{E}{6}(16,0,8)$ and  $\type{E}{6}(16,8,8a)$;\\
reflection exchanges $\type{E}{6}(20,1,10)$ and  $\type{E}{6}(20,5,10)$;
\end{quote}
and that
reflection fixes $\type{E}{6}(20,5,8)$, $\type{E}{6}(20,5,12)$ and $\type{E}{6}(16,8,8b)$.
In particular, reflection does not preserve balanced $2$-dimension.  In fact, we see
that the reflection of the Jordan pair $\type{E}{6}(16,0,8)$ is not Jordan, and that
the reflection of the  close-to-Jordan pair $\type{E}{6}(20,1,10)$ is not close-to-Jordan.
\end{example}

\subsection{The proofs of  \protect \eqref{eq:breveSmore} \protect and \protect \eqref{eq:shiftTmore}  \protect }
\label{subsec:bSbT}

We now return to the proofs of \eqref{eq:breveSmore} and \eqref{eq:shiftTmore} that we
postponed earlier.  The reader  may elect to further postpone reading these arguments, as
they will not be used in the final section.

The information that we need about the Weyl group elements $\wPT$ and $\wPS$ is
provided by the next two lemmas:

\begin{lemma}
\label{lem:lmvalues2}
If  $(S,T)\in \SPA(\Pi)$, then
$\chi_{S\backslash T}(\wPT(\lm))
=\chi_{S\backslash T}(\mu^+)$ \ for $\lm\in T$.
\end{lemma}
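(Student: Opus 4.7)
The plan is to prove the equivalent statement that $\supp_{\Pi}(\mu^+ - \wPT(\lambda)) \cap (S\sm T) = \emptyset$, which suffices because $\wPT(\lambda) \le \mu^+$ forces $\mu^+ - \wPT(\lambda)$ to be a nonnegative combination of the elements of $\Pi$.  First I would record the fundamental identity $\chi_\kappa(\wPT(\mu)) = \chi_\kappa(\mu)$ for $\kappa \in T$ and $\mu \in Q_\Sigma$ (immediate from $\wPT \in W_{\Pi\sm T}$ via \eqref{eq:wX}); applied to $\lambda \in T$ this yields $\chi_\kappa(\wPT(\lambda)) = \delta_{\kappa,\lambda}$, and in turn $\chi_\kappa(\mu^+ - \wPT(\lambda)) = m_\kappa - \delta_{\kappa,\lambda}$ for $\kappa \in T$, where $m_\kappa$ denotes the coefficient of $\kappa$ in $\mu^+$.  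Since every $m_\kappa$ is strictly positive for $\kappa \in \Pi$, the case $T = \emptyset$ is vacuous and one may assume $\chi_T(\mu^+) \in \{1,2\}$.

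When $\chi_T(\mu^+) = 1$, this forces $T = \{\lambda\}$ with $m_\lambda = 1$.  Here the $\chi_\lambda$-grading of $\cG$ takes values only in $\{-1,0,1\}$, so it is a $3$-grading, and the classical theory of cominuscule parabolic subalgebras applies: $\cG_1$ is an irreducible $\cG_0$-module whose set of weights (the positive roots $\mu$ with $\chi_\lambda(\mu) = 1$) forms a single $W_{\Pi\sm T}$-orbit.  Because $\lambda$ is $W_{\Pi\sm T}$-anti-dominant while $\mu^+$ is $W_{\Pi\sm T}$-dominant, the longest element $\wPT$ sends $\lambda$ to $\mu^+$, so that $\mu^+ - \wPT(\lambda) = 0$ and the desired equality holds trivially.

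When $\chi_T(\mu^+) = 2$, Proposition \ref{prop:path}(c) supplies the crucial combinatorial input $\comp(\tPi\sm T,\mu^-) \cap S = \emptyset$.  I would argue by contradiction: suppose some $\kappa \in \supp_\Pi(\mu^+ - \wPT(\lambda)) \cap (S\sm T)$.  Applying Lemma \ref{lem:connected} to $\wPT(\lambda) \in \Sigma$, the set $C := \{\mu^-\} \cup \supp_\Pi(\mu^+ - \wPT(\lambda))$ is connected in $\tPi$, so some path from $\mu^-$ to $\kappa$ lies inside $C$.  The aim is to show this path can be rerouted to lie inside $\tPi \sm T$, for then $\kappa \in \comp(\tPi\sm T,\mu^-) \cap S$, contradicting Proposition \ref{prop:path}(c).

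The hard part will be establishing this rerouting.  The approach is to prove the stronger structural claim $\supp_\Pi(\mu^+ - \wPT(\lambda)) \subseteq T \cup \comp(\tPi\sm T,\mu^-)$.  This splits according to $T = \{\lambda,\lambda'\}$ with $m_\lambda = m_{\lambda'} = 1$ versus $T = \{\lambda\}$ with $m_\lambda = 2$; using the $T$-profile computation above, the $T$-part of $\mu^+ - \wPT(\lambda)$ is uniquely forced, and one writes $\mu^+ - \wPT(\lambda) = \tau + \gamma$ with $\tau$ this $T$-part and $\gamma \in Q_{\Pi\sm T}^{\ge 0}$.  Proving $\supp_\Pi(\gamma) \subseteq \comp(\tPi\sm T,\mu^-)$ would proceed by an induction on $\height_\Pi(\mu^+ - \wPT(\lambda))$ parallel to the one that proves Lemma \ref{lem:connected}, adding one simple root at a time to $\wPT(\lambda)$ and tracking which component of $\tPi\sm T$ the added root lies in.  Making this induction go through uniformly, without case-by-case recourse to particular Dynkin diagrams, is where the bulk of the technical difficulty lies.
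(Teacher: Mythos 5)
Your reduction to showing $\supp_{\Pi}(\mu^+ - \wPT(\lm)) \cap (S\sm T) = \emptyset$ is sound, and your treatment of the case $\chi_T(\mu^+)=1$ (where $\wPT(\lm)=\mu^+$ outright, by the highest/lowest weight exchange in the irreducible $\cG_0$-module $\cG_1$) is correct. But the case $\chi_T(\mu^+)=2$ is the heart of the lemma, and there you only sketch a strategy --- proving $\supp_\Pi(\mu^+ - \wPT(\lm)) \subseteq T \cup \comp(\tPi\sm T,\mu^-)$ by an induction you explicitly defer as ``where the bulk of the technical difficulty lies.'' That containment is exactly what needs to be established, so as written the proposal does not prove the main case. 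Note also that your route detours through Proposition \ref{prop:path}(c), the combinatorial characterization of $\SPA(\Pi)$, whereas the admissibility hypothesis can be used much more directly.

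The paper's proof is a single short chain argument covering all cases at once. Set $\nu_0 = \wPT(\lm)$ and assume $\nu_0 \ne \mu^+$; since $\chi_T(\nu_0)=\chi_T(\lm)=1$ by \eqref{eq:wX}, $\nu_0\in\Sigma^+$, so one can climb to $\mu^+$ through positive roots $\nu_i = \nu_{i-1}+\lm_i$. The first added root $\lm_1$ must lie in $T$: otherwise $\lm_1 = \sg_{\Pi\sm T}(\nu)$ for some $\nu\in\Pi\sm T$, and applying $\wPT^{-1}=\wPT$ to $\nu_0+\lm_1\in\Sigma$ would make $\lm-\nu$ a root, which is impossible for distinct simple roots. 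Hence $\chi_T(\nu_p)=2$ for all $p\ge 1$, and SP-admissibility ($(1,2)\notin\chiST(\Sigma)$) forces $\chi_S(\nu_p)\ne 1$, so $\chi_S(\lm_p)=\chi_S(\nu_p)-\chi_S(\nu_{p-1})$ is even and $\lm_p\notin S$ for $p\ge 2$. Thus no $\lm_p$ lies in $S\sm T$ and $\chi_{S\sm T}(\nu_0)=\chi_{S\sm T}(\mu^+)$. If you want to salvage your approach, this is the mechanism you would need to reproduce inside your deferred induction; as it stands, your write-up has a genuine gap precisely at that step.
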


\begin{proof}  Let $\lm\in T$.  We can  assume
\[\nu_0:=\tilde{w}_{\Pi\sm T}(\lm)\neq\mu^+.\]

Now
$\chi_T(\tilde{w}_{\Pi\sm T}(\lm)) = \chi_T(\lm)$  by \eqref{eq:wX}, so
\[\chi_T(\nu_0) = 1.\]
Hence $\nu_0\in\Sigma^+$, so there are $\lm_1,\dots,\lm_r$ in $\Pi$
with $r\ge 1$ such that
\[\nu_{i}:=\nu_{i-1}+\lm_{i}\in\Sigma^+ \ \text{ for } 1\le i \le r \andd
 \nu_r = \mu^+.\]

We next claim that $\lm_1 \in T$.  Indeed, otherwise
$\lm_1\in \Pi\setminus T = \sigma_{\Pi\sm T}(\Pi\sm T)$, so
$\lm_1 =  \sigma_{\Pi\sm T}(\nu)$ for some $\nu\in \Pi\sm T$.   Hence
\[\wPT(\lm-\nu) = \wPT(\lm) -\wPT(\nu) = \nu_0 + \lm_1 = \nu_1\in \Sigma\]
and therefore
$\lm-\nu \in \Sigma$.  This is a contradiction since $\lm\in T$ and $\nu\in \Pi\sm T$.
So we have our claim.

Next, for $1 \le p \le r$, we have
\[\chi_T(\nu_p) = \chi_T(\nu_0+\lm_1 + \dots +\lm_p)\ge
\chi_T(\nu_0)+\chi_T(\lm_1) = 1 + 1 = 2.\]
Hence  $\chi_T(\nu_p) = 2$ for $1\le p \le r$.  Thus, since $(S,T)\in \SPA(\Pi)$,
we have $\chi_S(\nu_p) \ne 1$ for $1\le p \le r$.  So, for $2\le p \le r$,
we have
$\chi_S(\lm_p) = \chi_S(\nu_p) - \chi_S(\nu_{p-1}) \in 2\bbZ$ and
hence $\lm_p\notin S$.

If follows from the previous two paragraphs that
$\lm_p\notin S\sm T$ for $1\le p \le r$.  So
$\chi_{S\sm T}(\nu_0) = \chi_{S\sm T}(\nu_r) = \chi_{S\sm T}(\mu^+)$.
\end{proof}

\begin{lemma}
\label{lem:lmvalues3}
If  $X\subseteq \Pi$ and $\lm \in \Pi\sm X$, then
$\supp_{\Pi}(\wX(\lm)) = \comp(X\cup\set{\lm},\lm)$.
\end{lemma}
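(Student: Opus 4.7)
The plan is a reduction to the case where $X$ is connected and adjacent to $\lm$, followed by a contradiction argument. First I would record the basic observation that $\tilde{w}_X(\lm) \in \lm + Q_X$ by \eqref{eq:wX}, so $\supp_\Pi(\tilde{w}_X(\lm)) \subseteq X \cup \set{\lm}$ and contains $\lm$; moreover, since the coefficient of $\lm$ equals $1$, $\tilde{w}_X(\lm)$ is a positive root of $\Sigma$ and hence a non-negative integer combination of the simple roots in $X \cup \set{\lm}$ with $\lm$-coefficient equal to $1$.

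Next I would split $X = \bigsqcup_i X^{(i)}$ into its connected components. Since the subspaces $E_{X^{(i)}}$ are pairwise orthogonal, the $\tilde{w}_{X^{(i)}}$ pairwise commute, and each $\tilde{w}_{X^{(j)}}$ fixes $Q_{X^{(i)}}$ pointwise for $i\neq j$. Computing iteratively gives $\tilde{w}_X(\lm) = \lm + \sum_i v_i$ with $v_i := \tilde{w}_{X^{(i)}}(\lm) - \lm \in Q_{X^{(i)}}$. If $X^{(i)}$ is not adjacent to $\lm$ in $\Pi$, then $E_{X^{(i)}}\perp\lm$ and so $v_i = 0$; otherwise $X^{(i)} \cup \set{\lm}$ is a connected Dynkin subdiagram. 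Thus it suffices to show that in the latter case $\supp_\Pi(v_i) = X^{(i)}$, for then
\[
\supp_\Pi(\tilde{w}_X(\lm)) = \set{\lm} \cup \bigcup_{X^{(i)}\ \text{adjacent to}\ \lm} X^{(i)} = \comp(X\cup\set{\lm},\lm).
\]
This reduces the problem to the case in which $X$ is connected and adjacent to $\lm$, so that $Y := X\cup\set{\lm}$ is a connected Dynkin subdiagram; the claim then becomes $\supp_\Pi(\tilde{w}_X(\lm)) = Y$.

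In this core case I would argue by contradiction. Let $\beta := \tilde{w}_X(\lm)$ and suppose $Z := \supp_\Pi(\beta) \subseteq Y$ with $Z \neq Y$; note $\lm \in Z$. Since $Y$ is connected, some $\mu \in Y \sm Z = X \sm Z$ is adjacent in $\Pi$ to some $\nu \in Z$. Expanding $\beta$ as a non-negative integer combination of the simple roots in $Z$, and using $(\nu',\mu) \le 0$ for distinct simple roots of $\Pi$ (strict when $\nu'$ is adjacent to $\mu$), one gets $(\beta,\mu) < 0$, so $s_\mu(\beta) = \beta + k\mu$ with $k > 0$. Applying $\tilde{w}_X$ to both sides, and using $\tilde{w}_X^2 = 1$ together with the conjugation identity $\tilde{w}_X s_\mu \tilde{w}_X^{-1} = s_{\tilde{w}_X(\mu)}$, this becomes $s_\gamma(\lm) = \lm + k\gamma$ where $\gamma := w_X(\mu)$. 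Since $\mu$ is a simple root of the base $X$ of $\Sigma_X$ and $w_X$ is the longest element of $W_X$, we have $\gamma \in -\Sigma_X^+$; writing $\gamma = -\gamma'$ with $\gamma' \in \Sigma_X^+$ yields $(\lm,\gamma') > 0$. But $\gamma' = \sum_{\nu\in X} c_\nu \nu$ with $c_\nu \ge 0$, and $(\lm,\nu) \le 0$ for all $\nu \in X$ (since $\lm$ and each $\nu\in X$ are distinct simple roots of $\Pi$), forcing $(\lm,\gamma') \le 0$, a contradiction. The main obstacle is this last step: recognizing that conjugation by $\tilde{w}_X$ sends $s_\mu$ to a reflection through a negative root of $\Sigma_X$ is what converts the support statement about $\beta$ into the elementary inner-product constraint between $\lm$ and $\Sigma_X^+$ that fails.
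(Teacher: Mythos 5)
Your proof is correct, and its skeleton matches the paper's: both first use the factorization of $w_X$ into the longest elements of the connected components of $X$ to reduce to the component of $\lm$ in $X\cup\set{\lm}$, and both then exploit connectedness together with the strict positivity of the coefficients of $\wX(\lm)$ on its support to show that the support cannot stop short of the whole component. Where you differ is in how the key inequality $(\wX(\lm),\mu)\ge 0$ for $\mu\in X$ is obtained. The paper writes $\mu=\sg_X(\mu')=-w_X(\mu')$ for some $\mu'\in X$, so that $\wX(\lm)+\mu=\wX(\lm-\mu')$, which is not a root since a difference of distinct simple roots never is; this forces the inner product to be non-negative, and then the adjacency of $\mu$ to the support pulls $\mu$ into the support. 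You instead argue by contradiction: adjacency of $\mu\notin Z$ to $Z$ gives $(\wX(\lm),\mu)<0$, and you transfer this via the conjugated reflection $s_{w_X(\mu)}$ to the statement $(\lm,w_X(\mu))<0$, which contradicts $w_X(\mu)\in-\Sigma_{X}^{+}$ together with $(\lm,\nu)\le 0$ for all $\nu\in X$. Both routes rest on properties of the longest element; yours uses only $w_X(\Sigma_X^{+})=-\Sigma_X^{+}$, while the paper's additionally uses that $-w_X$ permutes the base $X$. Your conjugation step could in fact be shortened: since $\wX$ is an involutive isometry, $(\wX(\lm),\mu)=(\lm,w_X(\mu))$ directly, so the reflection $s_\mu$ need not be introduced at all. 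Either way the argument is sound.
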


\begin{proof}  Let $Y_1,\dots,Y_r$ be the connected components of $X\cup \set{\lm}$
with $\lm\in Y_1$, and let $Z_1,\dots,Z_s$ be the connected components of $Y_1\sm\set{\lm}$.
Then $Z_1,\dots,Z_s,  Y_2,\dots,Y_r$ are the connected components of $X$.  We must show that
$V = Y_1$,
where $V = \supp_{\Pi}(\wX(\lm))$.

Now it is clear that $\wX = \tilde w_{Z_1} \dots \tilde w_{Z_r} \tilde w_{Y_2} \dots \tilde w_{Y_r}$, so
$\wX(\lm) =  \tilde w_{Z_1} \dots \tilde w_{Z_r}(\lm) \in \lm + Q_{Z_1\cup \dots \cup Z_s} \subseteq Q_{Y_1}$.  Thus  $\lm \in V\subseteq Y_1$.

Next,  to show that $Y_1\subseteq V$, it suffices to show that
any $\mu\in X\cup \set{\lm}$ that is adjacent to some $\nu\in V$ lies in $V$.
For this we can assume that $\mu\ne \lm$, so $\mu\in X$.  Then
$\mu = \sg_X(\mu')$ for some $\mu'\in X$, so
$\wX(\lm) + \mu = \wX(\lm-\mu')\notin \Sigma$.
Hence $\langle \wX(\lm),\mu\rangle \ge 0$.  Since $\langle \nu,\mu \rangle < 0$,
this forces $\langle \nu',\mu \rangle > 0$ for some $\nu'\in V$.
So, since $\mu,\nu'\in \Pi$, we have $\mu = \nu'$, and hence  $\mu\in V$.
\end{proof}

\noindent\textbf{Proof of \eqref{eq:breveSmore}.}
By \eqref{eq:breveS} and Lemma \ref{lem:lmvalues2},
$\breve S =  \sg_{\Pi\sm T}(S\sm T) \cup (\breve S \cap T)$ and
\begin{equation*}
\breve S \cap T =  \set{\lm \in T \suchthat \chi_{S\sm T}(\mu^+) + \chi_{S\cap T}(\lm) = 1}
\end{equation*}
Furthermore, by Proposition \ref{prop:path}(c), $ \chi_{S}(\mu^+) \in \set{1,2}$, so
$\chi_{S\sm T}(\mu^+)\in \set{0,1,2}$.  If $\chi_{S\sm T}(\mu^+) = 0$ (or equivalently $S\subseteq T$),
then $\breve S \cap T= \set{\lm \in T \suchthat \chi_{S}(\lm) = 1} = S$. Also, if
$\chi_{S\sm T}(\mu^+) = 1$, then $\breve S \cap T= \set{\lm \in T \suchthat \chi_{S\cap T}(\lm) = 0} = T\sm S$.
Finally, if $\chi_{S\sm T}(\mu^+) = 2$, then $\breve S \cap T= \set{\lm \in T \suchthat \chi_{S\cap T}(\lm) = -1} = \emptyset$.\qed

\bigskip
\noindent\textbf{Proof of \eqref{eq:shiftTmore}.}
By \eqref{eq:shiftT} and Lemma \ref{lem:lmvalues3} (with $X = \Pi \sm S$), we have
\begin{equation} \label{eq:proofbT}
\bar T
=  \sg_{\Pi\sm S}(T\sm S) \ \bigcup \
\set{\lm \in S\sm T \suchthat \comp\big((\Pi \sm S)\cup\set{\lm}, \lm\big) \cap (T\sm S) = \emptyset}.
\end{equation}

If $T \subseteq S$, then $T\sm S = \emptyset$, so $\bar T = S\sm T$ by \eqref{eq:proofbT}.
Next if (ex) holds, then it is easy to check using \eqref{eq:proofbT} that $\bar T = \set{\sg_{\Pi\sm S}(\mu),\lm}$.
 We leave this to the reader.

Finally suppose that $T \not\subseteq S$ and (ex) does not holds.
We suppose for contradiction (which will complete the proof of \eqref{eq:shiftTmore}) that
\begin{equation} \label{eq:proofbT2}
\comp\big((\Pi \sm S)\cup\set{\lm}, \lm\big) \cap (T\sm S) = \emptyset,
\end{equation}
for some $\lm \in  S\sm T$.    If $S = \set{\lm}$, then $(\Pi \sm S)\cup\set{\lm} = \Pi$ which is connected,
so, by \eqref{eq:proofbT2}, we have $T\sm S = \emptyset$, giving a contradiction. Hence we can assume
that $S$ contains an element $\lm'\ne \lm$.  Thus, by Proposition \ref{prop:path}(c), we have
$S = \set{\lm,\lm'}$, with
\begin{equation} \label{eq:proofbT3}
\chi_\lm(\mu^+) = \chi_{\lm'}(\mu^+) = 1.
\end{equation}
Then $(\Pi \sm S)\cup\set{\lm} = \Pi\sm \set{\lm'}$ and
$T\sm S = T \sm \set{\lm'}$
since $\lm \notin T$.
So, by
\eqref{eq:proofbT2}, we have
\begin{equation}
\label{eq:proofbT4}
\comp(\Pi\sm \set{\lm'},\lm) \cap T = \emptyset.
\end{equation}
Now if $\Pi\sm \set{\lm'}$ is connected,  then $(\Pi\sm \set{\lm'}) \cap T = \emptyset$
implies that $T \subseteq \set{\lm'} \subseteq S$, a contradiction.
So
\begin{equation} \label{eq:proofbT5}
\Pi\sm \set{\lm'} \text{ has at least 2 connected components}.
\end{equation}
Now a check of $\mu^+$ for each type shows that the existence of distinct elements
$\lm, \lm' \in \Pi$ satisfying both \eqref{eq:proofbT3} and \eqref{eq:proofbT5} implies
that $\Pi$ has type $\type{A}{\rkn}$, where $\rkn \ge 3$.  Then, by \eqref{eq:proofbT4},
$\comp(\Pi\sm \set{\lm'},\lm)\cup\set{\mu^-}$ is a connected subset of
$\tPi \sm T$, so
\[\lm \in \comp(\tPi\sm T,\mu^-) \cap S.\]
Therefore, by Proposition \ref{prop:path}(c), we have $\chi_T(\mu^+) = 0$ or $1$.
But certainly $\chi_T(\mu^+) \ne 0$ since $T\ne \emptyset$.  Hence
$\chi_T(\mu^+) = 1$, so $T = \set{\mu}$ for some $\mu\in \Pi$, with $\mu \ne \lm'$ since  $T \not\subseteq S$.
It is now clear from \eqref{eq:proofbT4}   that we have (ex),
giving a contradiction.\qed

\section{Reflections of simple close-to-Jordan  pairs} \label{sec:close}
\emph{We  continue  with the assumptions  and notation of Section \ref{sec:SPfd}.}
In this section we use  results from Sections
\ref{sec:KPfd}---\ref{sec:Weylcompute}
to give a construction of the reflection of each
simple SP-graded close-to-Jordan pair   of type
$\Xn$ whose grading is   non-trivial.
We do this using the description of these graded pairs given in Theorem \ref{thm:closeSP}.

\subsection{The trilinear pair $\fT(J,\tau)$}
\label{subsec:PJtau}

In
order to construct some trilinear pairs, we fix a  pair $U = (U^-,U^+)$ of $2$-dimensional vector spaces
 and a nondegenerate bilinear map
$\formblank: U^- \times U^+\to {\bbK}$; and we set
$\formtwo(u^+,u^-) = \formtwo(u^-,u^+)$ for $u^\sg \in U^\sg$.

In order to construct gradings on our trilinear pairs, we fix bases
$\set{u_0^\sg,u_1^\sg}$ for $U^\sg$, $\sg = \pm$, such that
$\formtwo(u^-_i,u^+_j) = \delta_{ij}$
(so these bases are dual with respect to $\formblank$).

\begin{construction} \label{con:FJt} (The graded trilinear pair $\fT(J,\tau)$)
Let  $J$ be a  trilinear pair with products $\tprod_J^\sg$, $\sg = \pm$, and define
$D_J^\sg(x,a) \in \End(J^\sg)$  by  $D_J^\sg(x,a) y = \{x,a,y\}^\sg_J$.  Also, let
$\tau = (\tau^-,\tau^+)$, where $\tau^\sg: J^\sg \times J^\msg \to {\bbK}$ is a
bilinear map for $\sg = \pm$.
Then
\[\fT(J,\tau) = J\otimes U  := (J^-\otimes U^-, J^+\otimes U^+)\]
is a trilinear pair (but not in general a Kantor pair)
with products $\tprod^\sg$ given by
\begin{equation} \label{eq:FJtprod}
\{x\otimes r, a \otimes \ell, y \otimes s\}^\sg =
\{x,a,y\}_J^\sg \otimes \formtwo(r,\ell)s - \tau^\sg(x,a)y \otimes (r,\ell,s)
\end{equation}
for $x,y\in J^\sg$, $a\in J^\msg$, $r,s\in U^\sg$, $\ell\in U^\msg$, where
\[(r,\ell,s) = \formtwo(r,\ell)s - \formtwo(s,\ell)r\]
for $r,s\in U^\sg$, $\ell\in U^\msg$.
Moreover, one checks easily that
$\fT(J,\tau) =  \bigoplus_{i\in \bbZ} \fT(J,\tau)_i= \fT(J,\tau)_0 \oplus \fT(J,\tau)_1$ is a $\bbZ$-graded
trilinear pair with
\[\fT(J,\tau)_i = J^\sg \otimes u_i^\sg\]
for $i=0,1$ and $\fT(J,\tau)_i = 0$ otherwise.
\end{construction}

It is clear that the trilinear pair  $\fT(J,\tau)$ does not depend up to isomorphism
 on the choice of
$U^-$, $U^+$ or $\formblank$,
and that the grading
$\fT(J,\tau) = \fT(J,\tau)_0 \oplus \fT(J,\tau)_1$
does not depend up to isomorphism on the choice of the dual bases
$\set{u_0^\sg,u_1^\sg}$.

\begin{remark} \label{rem:Kronecker}  The  above construction
of trilinear pairs is a  pair version, with basis free products, of a construction of triple systems
given by Kantor in \cite{K1}.  More precisely suppose that $J = (X,X)$
is the double of a triple system $X$ and  $\tau^- = \tau^+ : X \times X \to \bbK$
is a bilinear map.  Then one can easily check that the pair $\fT(J,\tau)$ is
isomorphic to the double of the Kronecker product $X\otimes \bbK^2$ defined
in \cite[\S 6. Defn.~6]{K1} with $k = 2$, $\lambda = \mu = 1$ and $f(x,y) = \tau^\sg(y,x)$, $\sg = \pm$.
(It appears however that there is a typo in Kantor's definition:
the defining expression for $(YXZ)_i$ should read as
$\sum_{\al = 1}^k \left( (y_\al x_\al z_i)  + \lambda f(x_\al,y_i) z_\al - \mu f(x_\al,y_\al)z_i \right)$.)
\end{remark}

\subsection{Constructing the reflection of $\fP(\Pi;S,T)$}
\label{subsec:closeconstruct}

\emph{Suppose  for the rest of the article that $(S,T)\in \SPA(\Pi)$ and that
$\fP(\Pi;S,T)$ is close-to Jordan with nontrivial SP-grading.}
Our goal is to construct the reflection of $\fP(\Pi;S,T)$ in the form $\fT(J,\tau)$ for a specified
choice of $J$ and $\tau$.

As noted in Corollary \ref{cor:noGFE}, our assumptions imply that $\Xn =
\type{A}{n} \,(n \ge 2)$,  $\type{B}{n} \,(n \ge 2)$, $\type{C}{n}\, (n \ge 3)$, $\type{D}{n}\, (n \ge 4)$,
$\type{E}{6}$ or $\type{E}{7}$.  In particular, $n \ge 2$.

We write
the distinct elements of $\Pi$ as $\mu_1,\dots,\mu_n$.
By Theorems \ref{thm:KanSko} and  \ref{thm:closeSP}, we know that
\emph{$S$ is the set of nodes of $\Pi$ that are adjacent to $\mu^-$ in $\tPi$, $\chi_S(\mu^+) = 2$
and }
\begin{equation*} \label{eq:Tclose}
T = \set{\mu_t} \text{ for some } 1\le t \le n  \text{ with } \chi_{\mu_t}(\mu^+)=1.
\end{equation*}
We let
\[\Pi' = \Pi \setminus T \andd S' = S \setminus T.\]

In the remainder if this section \emph{we will  for convenience exclude from consideration the case when
$\Xn = \type{A}{n}$ and $\mu_t$ is an interior (not an end) node of $\Pi$}.
We will return
to consider this excluded case in the last subsection.

\begin{lemma}
\label{lem:Piprimeconn} If $\Xn = \type{A}{n}$, suppose that  the element $\mu_t$ of $T$ is an end node of $\Pi$.   Then
 \begin{equation*} \label{eq:Sclose}
S =
\begin{cases}
\hbox{$\set{\mu_s, \mu_t}$ where $\mu_s$ is the other end node in $\Pi$,} & \hbox{if $\Xn = \type{A}{n}$}; \\
\hbox{$\set{\mu_s}$ where $\mu_s\in \Pi$ with $\chi_{\mu_s}(\mu^+) = 2$,}  & \hbox{otherwise}.
\end{cases}
\end{equation*}
Also there exists a unique $\mu_{t'}$
in $\Pi'$ that is adjacent to $\mu_t$ in $\Pi$, so
$\Pi'$ is connected.
\end{lemma}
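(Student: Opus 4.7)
The plan is to handle the two cases $\Xn = \type{A}{n}$ and $\Xn \ne \type{A}{n}$ separately for the statement about $S$, and then reduce the statement about $\mu_{t'}$ to showing that $\mu_t$ is an end node of $\Pi$.

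For the first assertion about $S$, in the $A_n$ case I would use the fact that $\mu^+ = \mu_1+\cdots+\mu_n$ together with the well-known description of the extended diagram $\tPi$ as a cycle of $n+1$ nodes in which $\mu^-$ is adjacent to precisely the two end nodes of the path $\Pi$. Since by Theorem \ref{thm:KanSko} the set $S$ is defined as the nodes of $\Pi$ adjacent to $\mu^-$ in $\tPi$, this forces $S$ to be the pair of end nodes of $\Pi$. The hypothesis that $\mu_t$ is an end node then identifies $\mu_t$ as one of these, so setting $\mu_s$ to be the other end node gives $S=\set{\mu_s,\mu_t}$. In the non-$A_n$ case, Remark \ref{rem:mu+}(i) immediately gives $\card(S)=1$, and writing $S=\set{\mu_s}$ the identity $\chi_S(\mu^+)=2$ from Theorem \ref{thm:KanSko}(i) (or equivalently from Proposition \ref{prop:path}(c) applied to our $(S,T)$, which we already noted gives $\chi_S(\mu^+)=2$) specializes to $\chi_{\mu_s}(\mu^+)=2$, as required.

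For the second assertion, it suffices to show that $\mu_t$ is an end node of $\Pi$, since then $\mu_t$ has a unique neighbor $\mu_{t'}$ in $\Pi$, which lies in $\Pi'=\Pi\sm\set{\mu_t}$; connectedness of $\Pi'$ then follows because $\Pi$ is a tree (every Dynkin diagram is), and removing a leaf from a tree leaves a connected tree. In the $A_n$ case the hypothesis gives exactly this. In the other admitted types $\type{B}{n}$, $\type{C}{n}$, $\type{D}{n}$, $\type{E}{6}$ and $\type{E}{7}$, I would read off from the standard expressions for $\mu^+$ (as recorded in the Bourbaki plates) that the simple roots $\lambda\in\Pi$ with $\chi_\lambda(\mu^+)=1$ are always end nodes of $\Pi$: for $\type{B}{n}$ only the long end $\mu_1$ has coefficient $1$, for $\type{C}{n}$ only the short end $\mu_n$ does, for $\type{D}{n}$ only the three extremal nodes do, for $\type{E}{6}$ only the two end nodes of the long arm do, and for $\type{E}{7}$ only the end node $\mu_7$ does. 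Since by Theorem \ref{thm:closeSP} we have $T=\set{\mu_t}$ with $\chi_{\mu_t}(\mu^+)=1$, the node $\mu_t$ is an end node in each case.

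The only real content is this final case-by-case verification; it is straightforward since the coefficients of $\mu^+$ at each simple root are tabulated in Bourbaki, and in no admitted type does a simple root with coefficient $1$ occur in the interior of the diagram. I do not anticipate any conceptual obstacle, only a short bookkeeping step through six types.
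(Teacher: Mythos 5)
Your proposal is correct and follows essentially the same route as the paper, whose proof simply cites Remark \ref{rem:mu+}(i) for the first assertion and states that the second is ``easily checked considering types case-by-case''; you have filled in exactly those case-by-case details (that in types $\type{B}{n}$, $\type{C}{n}$, $\type{D}{n}$, $\type{E}{6}$, $\type{E}{7}$ every simple root with coefficient $1$ in $\mu^+$ is an end node, and that removing a leaf from the tree $\Pi$ leaves it connected). No gaps.
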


\begin{proof}  The first statement follows from Remark \ref{rem:mu+}(i),
and the second is easily checked considering types case-by-case. \end{proof}

With the assumptions and notation of Lemma \ref{lem:Piprimeconn}, we define a constant $\theta_{\Pi,T} \in \bbQ$ by
\begin{equation*} \label{eq:theta}
\theta_{\Pi,T} := \left\{
         \begin{array}{ll}
          p+1  & \hbox{if $\Xn = \type{A}{n}\, (n\ge 2)$}\\
          p+2  & \hbox{otherwise},
         \end{array}
       \right.
\end{equation*}
where $p$ is the product of the $(t,t')$-entry of the Cartan matrix  $C(\Pi)$
and the $(t',s)$-entry of the inverse of $C(\Pi')$.  (Note that $C(\Pi)$ is an
$I\times I$ matrix, where $I = \set{1,\dots,n}$, whereas $C(\Pi')$ is an
$I'\times I'$ matrix with $I' = I\setminus{\set{t}}$.  See Subsection~\ref{subsec:rootsystem}.)

\begin{theorem} \label{thm:closecon}  Suppose that $(S,T)\in \SPA(\Pi)$,
$P = \fP(\Pi;S,T)$ is close-to Jordan with nontrivial SP-grading, and, if $\Xn = \type{A}{n}$, the element $\mu_{t}$ of $T$ is an end node of~$\Pi$.
Then
\begin{itemize}
\item[(i)]  $P_0$ is a simple Jordan pair that is isomorphic
to $\fP(\Pi';S')$.
\item[(ii)]  Suppose that $J$ is any simple Jordan pair that is isomorphic to
$\fP(\Pi';S')$, and let
$\tau = \tau_{J,\Pi,T} :=(\tau^-,\tau^+)$, where $\tau^\sg: J^\sg \times J^\msg \to {\bbK}$
is given  by
\begin{equation} \label{eq:tau}
\tau^\sg(x,a) = \frac{\theta_{\Pi,T}}{\dim(J^\sg)} \tr (D_J^\sg(x,a))
\end{equation}
for $x\in J^\sg$, $x\in J^\msg$, $\sg = \pm$. Then  $\tau^\sg(x,a) =
\tau^\msg(a,x)$; $\tau^\sg$ is non-degenerate for $\sg = \pm$;  $\fT(J,\tau)$
is a simple SP-graded Kantor pair;  and
\[\bP \simgr \fT(J,\tau).\]
\end{itemize}
\end{theorem}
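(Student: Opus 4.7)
For part (i), I would compute $P_{0}$ directly from the explicit description in Construction \ref{con:SP}: $P_{0}^{\sg}=\bigoplus_{\mu}\cG_{\mu}$ over roots $\mu\in\Sigma$ with $\chi_{S}(\mu)=\sg$ and $\chi_{T}(\mu)=0$.  Since $T=\{\mu_{t}\}$, the second condition forces $\mu\in\Sigma':=\Sigma\cap\spann(\Pi')$; and for such $\mu$ we have $\chi_{S}(\mu)=\chi_{S'}(\mu)$.  Thus $P_{0}^{\sg}$ is precisely the short-root part of the $\chi_{S'}$-grading of the simple subalgebra $\cG'\subseteq\cG$ of type $\Pi'$, so $P_{0}\simeq\fP(\Pi';S')$ by Construction \ref{con:Kantor} (connectedness of $\Pi'$ is Lemma \ref{lem:Piprimeconn}).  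For the Jordan property, it suffices that no $\mu\in\Sigma'$ satisfies $\chi_{S'}(\mu)=2$: by Remark \ref{rem:fPS}(ii) and close-to-Jordan-ness, the only $\mu\in\Sigma$ with $\chi_{S}(\mu)=2$ is $\mu^{+}$, and $\chi_{T}(\mu^{+})=1$ places $\mu^{+}\notin\Sigma'$.

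For part (ii), I would first invoke Corollary \ref{cor:refclose} to replace $\bP$ by $\fP(\bar{S},T)$ with $\bar{S}=\sg_{\Pi'}(S')\subseteq\Pi'$, and work inside the $\BCtwo$-graded Lie algebra $\tg:=\cG(\chi_{(\bar{S},T)})$.  Since $\bar{S}\subseteq\Pi'$ is disjoint from $T=\{\mu_{t}\}$, the root-space description gives $\bar{P}_{0}^{\sg}=\bigoplus\cG_{\mu}$ over $\mu\in\Sigma'$ with $\chi_{\bar{S}}(\mu)=\sg$, so $\bar{P}_{0}\simeq\fP(\Pi';\bar{S})\simeq\fP(\Pi';S')\simeq J$ by Theorem \ref{thm:Kantor}(iii) (as $\sg_{\Pi'}\in\Aut(\Pi')$).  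The key additional structure is an $\spl_{2}$-triple $(e^{-},h_{t},e^{+})$ with $e^{\pm}\in\cG_{\pm\mu_{t}}$.  The grading by $\mu_{t}$-coefficient makes every $\mu$ occurring in $\bar{P}^{\sg}$ have coefficient $0$ or $\sg$, so under the adjoint action of $\spl_{2}$ the module $\bar{P}^{\sg}=\bar{P}_{0}^{\sg}\oplus\bar{P}_{1}^{\sg}$ decomposes into copies of the $2$-dimensional irreducible, each copy being $\bbK x\oplus\bbK [e^{\sg},x]$ for $x\in\bar{P}_{0}^{\sg}$.  Choosing $u_{0}^{\sg},u_{1}^{\sg}$ to correspond (up to a uniform scalar) to highest- and lowest-weight vectors of the standard $\spl_{2}$-module then furnishes a graded linear isomorphism $\fT(J,\tau)^{\sg}=J^{\sg}\otimes U^{\sg}\to\bar{P}^{\sg}$, $x\otimes u_{0}^{\sg}\mapsto x$, $x\otimes u_{1}^{\sg}\mapsto c[e^{\sg},x]$ for a suitable scalar $c$.

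With this identification, transporting the Kantor triple product of $\bar{P}$ through the isomorphism splits the product $[[x\otimes r,a\otimes\ell],y\otimes s]$ into a "Jordan term'' (using the Jordan product of $J=\bar{P}_{0}$ and the pairing on $U$) and a "correction term'' arising whenever one internal commutator lands in $\tg_{\pm 2,\pm 1}=\bbK e^{\pm\mu_{t}+\cdots}$; invariance of the latter under the inner structure algebra of $J$ forces the correction coefficient to be a bi-invariant bilinear form on $J^{\sg}\times J^{\msg}$, which by simplicity of $J$ and standard Jordan-pair trace theory must be a scalar multiple of $\tr(D_{J}^{\sg}(x,a))$.  The identity $\tr(D_{J}^{+}(x,a))=\tr(D_{J}^{-}(a,x))$ and the equality $\dim J^{+}=\dim J^{-}$ yield the symmetry $\tau^{\sg}(x,a)=\tau^{\msg}(a,x)$; non-degeneracy follows from non-degeneracy of the restricted Killing form, and simplicity of $\fT(J,\tau)$ follows since $\tg$ is simple and envelops it.  \emph{The main obstacle} is pinning down the scalar $c$ and thereby the explicit constant $\theta_{\Pi,T}=p+1$ (type $\type{A}{n}$) or $p+2$ (other types): this will require a single explicit bracket computation in $\cG$ using the fact that the $\mu_{t}$-component of $\wPS(\mu_{t})$ is controlled by the $(t,t')$-entry of $C(\Pi)$ and the $(t',s)$-entry of $C(\Pi')^{-1}$, together with matching the result against $\tr(D_{J}^{\sg}(x,a))$ evaluated on a test pair of root vectors in the highest-weight component of $J$.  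The excluded case of interior $\mu_{t}$ in type $\type{A}{n}$ must be handled by a separate reduction in the final subsection, since then Lemma \ref{lem:Piprimeconn} fails and $\Pi'$ is disconnected.
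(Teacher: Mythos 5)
Your part (i) is essentially the paper's argument: identify $P_0^\sg$ with the root spaces over $\Sigma'=\Sigma\cap\spann(\Pi')$, recognize this as $\fP(\Pi';S')$ inside the simple subalgebra $\cG'$, and get the Jordan property from the fact that $\mu^+$ is the only root with $\chi_S$-value $2$ and it has $\chi_T(\mu^+)=1$, so the $(\sg 2,0)$-component vanishes. That part is fine.

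For part (ii) you take a genuinely different route, and it has a real gap. The paper does \emph{not} pass to $\fP(\sg_{\Pi'}(S'),T)$ and does not use the $\spl_2$-triple at the simple root $\mu_t$. It works with $Q=\bP^{\op}$ directly inside the original $\cG(\chi_{(S,T)})$ (where, by the definition of Weyl images, the products are still $[[x,a],y]$ in $\cG$), and uses the $\spl_2$-triple $(e^-,h^+,e^+)$ attached to the \emph{highest} root $\mu^{\pm}$ together with the grade-reversing $\omega=\exp(\ad e^+)\exp(-\ad e^-)\exp(\ad e^+)$ from Lemma \ref{lem:omega}. There the decomposition of $P^-\oplus P^+$ into $2$-dimensional irreducibles is automatic, because $\ad h^+$ acts on $P^{\sg}$ with the single eigenvalue $\sg 1$. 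In your setup the eigenvalue of $\ad h_t$ on $\cG_\mu$ is $\langle\mu,\mu_t\rangle$, \emph{not} the $\mu_t$-coefficient of $\mu$; a root with $\mu_t$-coefficient $0$ can a priori have $\langle\mu,\mu_t\rangle=0$ with $\mu+\mu_t\notin\Sigma$, which would produce a trivial $\spl_2$-summand inside $\bar P_0^\sg$ and break your map $x\otimes u_1^\sg\mapsto c[e^\sg,x]$ (it would fail to be injective onto $\bar P_1^\sg$). The claim that every irreducible constituent of $\bar P^\sg$ is $\bbK x\oplus\bbK[e^\sg,x]$ with $x\in\bar P_0^\sg$ is therefore a substantive assertion needing proof (it reduces to showing $\langle\mu,\mu_t\rangle=-\sg$ for every $\mu$ in the support of $\bar P_0^\sg$, which ultimately requires knowing how $\sg_{\Pi'}(S')$ sits relative to $\mu_t$), and you give no argument for it.

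The second gap is that the actual content of \eqref{eq:tau} — the value of the constant $\theta_{\Pi,T}$ — is exactly what you defer as "the main obstacle." The paper does not obtain it by a test bracket computation: it defines $\tau^\sg$ intrinsically by $[[x,a],e^\sg]=\tau^\sg(x,a)e^\sg$ (so symmetry and nondegeneracy come from Lemma \ref{lem:omega}(vi) plus the vanishing of $\zeta^\sg$ on $P_i^\sg\times P_{1-i}^\msg$), verifies the product identity \eqref{eq:FJtprod} by an eight-case check on the indices $(i,j,k)$ using Lemma \ref{lem:omega}(vii) and the vanishing of $\cG(\chi_{(S,T)})_{\sg 3,*}$, and then proves \eqref{eq:tau} by observing that the space of linear forms on $\ff=\cG'(\chi_{S'})_0$ vanishing on its semisimple part $\fk$ is one-dimensional, so that $\nu^\sg$ and $z\mapsto\tr(\ad(z)|_{J^\sg})$ are proportional; evaluating at $\ell_s'$ and computing $\mu^+(\ell_s')$ via the Cartan matrices $C(\Pi)$ and $C(\Pi')^{-1}$ yields $\theta_{\Pi,T}$. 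Your invariance argument for "the correction term is a multiple of the trace form" is in the right spirit, but as written neither the shape of the correction term in \eqref{eq:FJtprod} nor the scalar is actually established.
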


\begin{proof} We  first set some notation.  As usual, choose $h_i \in [\cG_{\mu_i},\cG_{-\mu_i}] $
so that $\mu_i(h_i) = 2$ for $1\le i \le n$, in which case $\set{h_i}_{i=1}^n$ is a basis for $\cH$.

Also, choose  nonzero $e^\sg \in \cG_{\mu^\sg}$ for $\sg = \pm$  such that
$[h^+,e^\sg] = \sg 2 e^\sg$,  where $h^+ = [ e^+,e^-]$.  Then since
$\chi_S(\mu^+) = 2$ and $\chi_T(\mu^+) = 1$, we see by \eqref{eq:esg} that
\begin{equation} \label{eq:esgmore}\cG(\chiST)_{\sg 2,*} = \cG(\chiST)_{\sg 2,\sg 1} = \cG_{\mu^\sg} = \bbK e^\sg.\end{equation}

(i): Let
$\Sigma' = \set{\mu\in \Sigma \suchthat \chi_T(\mu) = 0}$.
Then, since  $\Pi'$ is connected, $\Sigma'$ is an irreducible root system of rank $n-1$
(in its real span) with base
$\Pi'$.   Let $\cG'$ be the subalgebra of $\cG$
generated by $\set{\cG_{\mu_i} + \cG_{-\mu_i} \suchthat i \ne t}$, and let
$\cH' = \cH\cap \cG'$.
Then
\[\cG' = \textstyle \cH' \oplus ( \bigoplus_{\mu\in \Sigma'} \cG_\mu\big);\quad \cH' = \sum_{i\ne t} {\bbK} h;\]
$\cG'$ is a simple Lie algebra
with Cartan subalgebra $\cH'$; and $\Sigma(\cG',\cH') = \Sigma'$ (identifying
elements of $\Sigma'$ with their restrictions to $\cH'$).

We now use $\cG'$, $\cH'$, $\Pi'$ and $S'\in \KA(\Pi')$ in Construction \ref{con:Kantor} to obtain
a $5$-graded Lie algebra $\cG'(\chi_{S'}) = \oplus_{i\in \bbZ} \cG'(\chi_{S'})_i$ and hence the
Kantor pair $\fP(\Pi';S')$ enveloped by
$\cG'(\chi_{S'})$ (see Construction \ref{con:Kantor}).  Note that for $i\ne 0$ we have
\[\cG'(\chi_{S'})_i \textstyle = \sum_{\mu\in \Sigma',\, \chi_{S'}(\mu)= i} \cG'_\mu
= \sum_{\mu\in \Sigma,\, \chi_{S}(\mu)= i,\, \chi_T(\mu) = 0 } \cG_\mu =
\cG(\chiST)_{i,0}.\]
Hence
\begin{equation}\label{eq:fPS'} \fP(\Pi';S')^\sg = \cG'(\chi_{S'})_{\sg 1} = \cG(\chiST)_{\sg 1,0}  = P_0^\sg,\end{equation}
so
$P_0 = \fP(\Pi';S')$.  Moreover, $\cG'(\chi_{S'})_{\sg 2} = \cG(\chiST)_{\sg 2,0}  = 0$ by \eqref{eq:esgmore}, so
$\cG'(\chi_{S'})$ is $3$-graded and thus  $P_0$ is Jordan by Remark  \ref{rem:fPS}(ii).

(ii):  To prove (ii), we can assume that $J = P_0$ and use the notation and conclusions in the proof of (i).

Let $\omega := \exp(\ad e^+) \exp(-\ad e^-) \exp(\ad e^+) \in \Aut(\cG)$. We will use Lemma \ref{lem:omega},
which gives us detailed information about $\omega$.

First, by Lemma \ref{lem:omega}(v),
$\omega(x) = - \sg [e^\msg, x]$ for $x \in P^\sg$.  Thus, since
$e^\msg \in \cG(\chiST)_{\msg 2, \msg 1}$ by \eqref{eq:esgmore}, we see that
$\omega(\cG(\chiST)_{\sg 1, \sg i}) = \cG(\chiST)_{\msg 1, \msg(1-i)}$ for $i=0,1$. That is
\begin{equation*} \label{eq:rp8}
\omega \text{ exchanges } P_i^\sg \text { and } P^\msg_{1-i} .
\end{equation*}

Next let
$\tau^\sg = \zeta^\sg|_{J^\sg \times J^\msg} : J^\sg \times J^\msg \to \bbK$, with
$\zeta^\sg$ as defined in Lemma \ref{lem:omega}(vi),  so
\begin{equation} \label{eq:deftau} [[x,a],e^\sg] = \tau^\sg(x,a) e^\sg \end{equation}
for $x\in J^\sg$, $a\in J^\msg$.
At this point we will take this as the definition
of $\tau^\sg$, and then at the end of the proof we will prove \eqref{eq:tau}.

Note that $\tau^\sg(x,a) = \tau^\msg(a,x)$  by~\eqref{eq:zetasym}.
Also, for $\sg = \pm$, $i=0,1$, we have
\[[[P_i^\sg,P^\msg_{1-i}],e^\sg] \subseteq
[[\cG(\chiST)_{\sg 1,\sg i},\cG(\chiST)_{\msg 1,\msg(1-i)} ],\cG(\chiST)_{\sg 2,\sg 1} ], \]
which is contained in  $\cG(\chiST)_{\sg 2, \sg 2i} = 0$.
Thus, by \eqref{eq:defzeta1}, $\zeta^\sg(P_i^\sg,P^\msg_{1-i}) = 0$.  So the non-degeneracy of $\zeta^\sg$ implies that of $\tau^\sg$.

In the rest of the proof, it is more convenient to work with $Q := \bP^\op$, rather than $\bP$ itself.  We next show that the $\bbZ$-graded trilinear pairs $Q$ and $\fT(J,\tau)$ are  graded-isomorphic.
This will show that $\fT(J,\tau)$ is a simple SP-graded Kantor pair and, by
Proposition \ref{prop:Pop=Pgr}, that
$\bP \simgr \fT(J,\tau)$ (leaving only  \eqref{eq:tau} to prove).

Now $Q= Q^0 \oplus Q^1$, where   $Q_0^\sg = \bP_0^\msg = P_0^\sg$  and
$Q_1^\sg = \bP_1^\msg = P_1^\msg = \omega(P_0^\sg)$, so
\[Q_i^\sg = \omega^i(J^\sg)\]
for $\sg = \pm$, $i = 0,1$.
Moreover, by the definition of Weyl images (see Section \ref{subsec:SPWeyl}), the products in $Q$ are
given by $\{x,a,y\}^\sg = [[x,a],y]$ in $\cG$. On the other hand,
$\fT(J,\tau)= \fT(J,\tau)_0 \oplus \fT(J,\tau)_1$ with
$\fT(J,\tau)_i = J^\sg \otimes u_i^\sg$
and products given by \eqref{eq:FJtprod}.
With  this in mind, we define $\ph = (\ph^-,\ph^+)$, where
$\ph^\sg:  \fT(J,\tau)^\sg \to Q^\sg $ is the linear isomorphism such that
\[\ph^\sg(x \otimes u^\sg_i) = \omega^i x\]
for $x \in J^\sg$,  $\sg = \pm$ and $i = 0,1$.
In order to prove that $\ph$ is an isomorphism of trilinear pairs, we must show that
\[ \ph^\sg \big(\{x \otimes u^\sg_i, a \otimes u^\msg_j ,  y \otimes u^\sg_k  \} \big) =
[[ \omega^i x,\omega^j a],\omega^k y]\]
for $\sg = \pm$,  $x,y\in J^\sg$, $a\in J^\msg$ and $i,j,k = 0,1$.
But $\formtwo(u_i^\sg,u_j^\msg)u_k^\sg = \delta_{ij}u_k^\sg $ and
$(u_i^\sg,u_j^\msg,u_k^\sg) = \delta_{ij}u_k^\sg - \delta_{jk}u_i^\sg$.  So we must prove
that
\begin{equation}
\label{eq:ccpf5}
\delta_{ij}\omega^k([[x,a],y]) -\delta_{ij}\tau^\sg(x,a)\omega^k y
+ \delta_{jk}\tau^\sg(x,a)\omega^i y =
[[ \omega^i x,\omega^j a],\omega^k y].
\end{equation}
Now if $(i,j,k) = (0,0,0)$, \eqref{eq:ccpf5} is trivial; whereas if $(i,j,k) = (1,1,1)$,
\eqref{eq:ccpf5} holds  since $\omega$ is an automorphism.
If  $(i,j,k) = (1,0,0)$ or  $(0,0,1)$, then \eqref{eq:ccpf5} follows from Lemma \ref{lem:omega}(vii);
whereas if $(i,j,k) = (0,1,0)$, \eqref{eq:ccpf5} holds since its right hand side lies in
$\cG(\chiST){\sg 3, *}$, which is $0$.
Finally the cases $(1,1,0)$, $(1,0,1)$ and $(0,1,1)$ follow by applying
$\omega$ to the cases $(0,0,1)$, $(0,1,0)$ and $(1,0,0)$  respectively.

\emph{It remains  to prove \eqref{eq:tau}}.   For this let
$\set{\ell'_i}_{i\ne t}$ be the basis for $\cH'$ that is dual to $\set{\mu_i}_{i\ne t}$.

Recall   from the proof of $(i)$ that $\cG'(\chi_{S'}) = \oplus_{i\in \bbZ} \cG'(\chi_{S'})_i$ is 3-graded and that
$\cG'(\chi_{S'})_{\sg 1} = P_0^\sg = J^\sg$.
To simplify notation, we set $\ff :=  \cG'(\chi_{S'})_0$.  Then
\begin{equation} \label{eq:defff}
\ff =   \textstyle  \cH' \oplus \sum_{\mu\in \Sigma',\, \chi_{S'}(\mu)= 0} \cG'_\mu
= \cH' \oplus \sum_{\mu\in \Sigma,\, \chi_{S}(\mu)= 0,\, \chi_T(\mu) = 0 } \cG_\mu = \bbK h_s \oplus \fk
\end{equation}
as vector spaces, where $\fk$ is the  subalgebra  of $\cG$ generated by
$\set{\cG_{\mu_i} +  \cG_{-\mu_i} \suchthat i \ne s,t}$.  Further, $\fk$ is semi-simple and
$\ell'_{s}$ is in the centre of $\ff$.  So
$\ell_s'\notin \fk$ and hence
\begin{equation} \label{eq:ccpf6}
\ff = {\bbK} \ell'_s \oplus \fk.
\end{equation}
as algebras.
We next construct some elements of the one-dimensional space
\[\cF := \set{\lm \in \Hom(\ff,{\bbK}) \suchthat \lm(\fk) = 0}.\]

First note that   $\ff \subseteq \cG(\chi_S)_{0,0}$ by \eqref{eq:defff}.  So
by \eqref{eq:esgmore},   $[\ff,e^\sg] \subseteq {\bbK} e^\sg$
for $\sg = \pm$. Thus for $\sg = \pm$, there exists a unique   $\nu^\sg\in \Hom(\ff,{\bbK})$ such that
\begin{equation*} \label{eq:F1}
[z,e^\sg] = \nu^\sg(z) e^\sg
\end{equation*}
for $z\in \ff$.  Then since $\fk = [\fk,\fk]$, we have $\nu^\sg \in \cF$.  Also
$\nu^\sg(\ell_s') = \mu^\sg(\ell_s') = \sg \mu^+(\ell_s')$.

Next,  since $\cG'(\chi_{S'})$ is $3$-graded, we have
$[\ff,J^\sg] \subseteq J^\sg$ for $\sg = \pm$.   So
we can define $\xi^\sg \in \Hom(\ff,{\bbK})$ by
\begin{equation*}\label{eq:F2}
\xi^\sg (z) = \tr(\ad(z)\mid_{J^\sg})
\end{equation*}
for $z\in \ff$. Once again we see that $\xi^\sg\in \cF$.   Moreover, since
$J^\sg = \cG'(\chi_{S'})_{\sg 1}$ by \eqref{eq:fPS'}, we  have
$\ad(\ell_s')\mid_{J^\sg} = \sg \id_{J^\sg}$, so
$\xi^\sg(\ell_s') = \sg \dim(J^\sg) \ne 0$.

Now, since $\cF$ is one dimensional, we have
$\nu^\sg = r^\sg \xi^\sg$ for some  $r^\sg\in {\bbK}$, in which case
(evaluating at $\ell_s'$)  we have $r^\sg = \frac{\mu^+(\ell_s')}{\dim(J^\sg)}$ for $\sg = \pm$.
So for $z\in \ff$ we have
\begin{equation*}
\label{eq:F3}
\nu^\sg(z) = \frac{\mu^+(\ell_s')}{\dim(J^\sg)} \tr(\ad(z)\mid_{J^\sg}).
\end{equation*}

But if $x\in J^\sg$, $a\in J^\msg$, we have
$[x,a]\in \ff$ since $\cG'(\chi_{S'})$ is $3$-graded.
Also  $\tau^\sg(x,a) =  \nu^\sg([x,a])$ by \eqref{eq:deftau}, so
\[\tau^\sg(x,a) = \frac{\mu^+(\ell_s')}{\dim(J^\sg)} \tr(D_J^\sg(x,a)).\]

Finally, we compute $\mu^+(\ell_s')$.  Let
$C = C(\Pi)$ with
$(i,j)$-entry $c_{ij}  := \langle \mu_i,\mu_j \rangle$,
and let
$\set{\ell_i}$ be the basis for $\cH$ that is dual to $\set{\mu_i}$.  Then
\[h_j \textstyle = \sum_i c_{ij} \ell_i\]
for $1\le j \le n$.  Similarly, since  $C(\Pi')$ has $(i,j)$-entry $c_{ij}$ for $i,j\ne t$, we have
$h_j = \sum_{i\ne t} c_{ij} \ell'_i$  for $j \ne t$.  So letting $D' = {C(\Pi')}^{-1}$ with
$(i,j)$- entry $d'_{ij}$ for $i,j\ne t$, we have
$\ell'_j =  \textstyle \sum_{i\ne t} d'_{ij} h_i$
for $j\ne t$.  Therefore
\begin{align*} \ell_s' &= \textstyle \sum_{i\ne t} d'_{is} h_i
= \sum_{i\ne t} d'_{is} \sum_k c_{ki} \ell_k =  \sum_k \left(\sum_{i\ne t} c_{ki}  d'_{is}\right) \ell_k\\
&= \textstyle \left(\sum_{i\ne t} c_{ti}  d'_{is}\right) \ell_t + \sum_{k\ne t} \left(\sum_{i\ne t} c_{ki}  d'_{is}\right) \ell_k
= c_{tt'}  d'_{t's}\ell_t + \ell_s,
\end{align*}
since $c_{ti} = 0$ for $i\ne t'$.  Thus
$\mu^+(\ell_s') =  c_{tt'}  d'_{t's} \mu^+(\ell_t)
 + \mu^+(\ell_s) = \theta_{\Pi,T}$. \end{proof}

Theorem  \ref{thm:closecon} shows that the reflection $\bP$ of any Kantor pair $P$ satisfying the given assumptions  can be constructed
in the form $\fT(J,\tau)$, where  $J$ and $\tau$ are  described in terms of marked Dynkin diagrams.  In the next corollary, we describe $J$ and $\tau$ using classical matrix constructions.

Our notation in Columns 3 and 4 of Table  \ref{tab:TJtau}  follows \cite[\S 17.4]{L}.  Indeed,
$\text{I}_{p,q}$ is the Jordan pair
$(M_{p,q}(\bbK),M_{p,q}(\bbK))$ with products $\{x,a,y\}^\sg = xa^t y + ya^t x$;
$\text{II}_n$ is the subpair $(\text{A}_n(\bbK), \text{A}_n(\bbK))$
of $\text{I}_{n,n}$, where $\text{A}_n(\bbK)$ is the space of alternating $n\times n$-matrices;
$\text{IV}_n$ is the Jordan pair $(\bbK^n,\bbK^n)$ with products $\{x,a,y\}^\sg = q(x,a)y + q(y,a)x - q(x,y)a$,
where  $q : \bbK^n\times \bbK^n \to \bbK$ is the non-degenerate symmetric bilinear form on $\bbK^n$; and $\text{V}$ is the Jordan pair
$(\Mat_{1,2}(\cC), \Mat_{1, 2}(\cC))$ with products $\{x,a,y\}^\sg = x(\bar a^t  y) + y (\bar a^t x)$, where $\cC$ is the (split) Cayley algebra with standard involution
$c\mapsto \bar c$ and trace form  $\trcomp_\cC : \cC \to \bbK$ given by $\trcomp_\cC(c) = c + \bar c$  \cite[\S 12.10]{L}.

\begin{corollary} \label{cor:tableBP}  Suppose we have the assumptions and notation
of Theorem \ref{thm:closecon}.  The first column of Table \ref{tab:TJtau} lists the possibilities, up to diagram automorphism, for the marked Dynkin diagrams representing $P = \fP(\Pi;S,T)$ (with the restriction on the rank $n$ of $\Pi$ also indicated).
The second column lists the corresponding marked Dynkin diagram representing $\bP \simgr \fP(\Pi;\sg_{\Pi'}(S'),T)$ (see Corollary \ref{cor:refclose}).  Finally, in each row, we have
\[\bP  \simgr \fT(J,\tau),  \]
where $J$ and $\tau$ are listed in Columns 3 and 4 of the table respectively.
\end{corollary}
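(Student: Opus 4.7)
The plan is to verify the corollary row-by-row by combining Theorem~\ref{thm:closecon}, Corollary~\ref{cor:refclose}, and the classification in Theorem~\ref{thm:Kantor}. First, for each admissible type $\Xn \in \{\type{A}{n},\type{B}{n},\type{C}{n},\type{D}{n},\type{E}{6},\type{E}{7}\}$, I enumerate the close-to-Jordan SP-gradings up to isomorphism; by Theorem~\ref{thm:KanSko}(ii) the set $S$ is uniquely determined as the nodes of $\Pi$ adjacent to $\mu^-$ in $\tPi$, and by Theorem~\ref{thm:closeSP} the set $T$ ranges (up to $\Aut(\Pi)$) over $\{\mu_t\}$ with $\chi_{\mu_t}(\mu^+)=1$. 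After listing these pairs $(S,T)$, I obtain the marked diagram for $\bP$ in Column~2 via Corollary~\ref{cor:refclose}, which yields $\bP \simgr \fP(\Pi;\sg_{\Pi'}(S'),T)$ directly from the marked diagram.

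Next I identify $J \simgr P_0$. By Theorem~\ref{thm:closecon}(i), $J \simgr \fP(\Pi';S')$ where $\Pi' = \Pi \setminus T$ (which is connected by Lemma~\ref{lem:Piprimeconn} in the allowed cases) and $S' = S\setminus T$. Removing the node $\mu_t$ from $\Pi$ produces a Dynkin diagram of classically known type, and marking it by $S'$ gives a $3$-graded simple Lie algebra whose $\pm 1$-component is a classical Jordan pair. Using the standard correspondence between $3$-graded simple Lie algebras and simple Jordan pairs (compare Theorem~\ref{thm:Kantor} specialized to $\chi_{S'}(\mu^+)=1$), I match each such $(\Pi',S')$ with one of the entries $\text{I}_{p,q}$, $\text{II}_n$, $\text{IV}_n$, $\text{V}$ listed in \cite[\S 17.4]{L}, thereby populating Column~3.

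For Column~4, I apply Theorem~\ref{thm:closecon}(ii), which says $\tau^\sg(x,a) = (\theta_{\Pi,T}/\dim J^\sg)\tr(D_J^\sg(x,a))$. The scalar $\theta_{\Pi,T} = p + 1$ (type $A$) or $p+2$ (otherwise) where $p = c_{tt'}\,d'_{t's}$ reduces, in each case, to a short computation involving the Cartan matrix $C(\Pi)$ (whose entries $c_{tt'}$ are $-1$ in the simply laced cases and $\pm 1$ or $\pm 2$ in the non-simply laced cases) and the inverse Cartan matrix $C(\Pi')^{-1}$, whose entries are the well-known fundamental weight expansions tabulated in \cite[Plates I--VII]{Bour}. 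To finish each row I compute $\tr(D_J^\sg(x,a))$ for the classical Jordan pair $J$; this is a routine calculation giving
\[ \tr(D^\sg_J(x,a)) = \begin{cases} (p+q)\tr(a^t x) & \text{in } \text{I}_{p,q},\\ 2(n-1)\tr(a^t x) & \text{in } \text{II}_n, \\ 2\,q(x,a) & \text{in }\text{IV}_n,\\ 6\,\trcomp_\cC(\bar a^t x) & \text{in } \text{V},\end{cases}\]
from which the explicit form of $\tau$ follows.

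The main obstacle is the bookkeeping in the final step: one must identify $\fP(\Pi';S')$ with the correct classical pair and compute $\theta_{\Pi,T}$ in each of the finitely many cases, while carefully tracking diagram automorphism orbits so that the entries in Columns~1 and~2 do pair up as claimed. The one subtle point is the $A_n$ entries where Theorem~\ref{thm:closecon} requires $\mu_t$ to be an end node of $\Pi$; interior-node cases are postponed to the next subsection of the paper, as flagged in the hypotheses, so they do not concern us here. With these verifications in hand, every row of the table is established.
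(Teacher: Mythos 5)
Your overall strategy is the one the paper uses: enumerate the close-to-Jordan pairs $(S,T)$ via Theorems \ref{thm:KanSko} and \ref{thm:closeSP}, get Column 2 from Corollary \ref{cor:refclose}, identify $J\simeq \fP(\Pi';S')$ with a classical matrix Jordan pair (the paper does this by citing the Loos--Neher table in \cite[\S 19.9]{LN}), and then feed $\theta_{\Pi,T}$ and $\tr(D_J^\sg(x,a))$ into \eqref{eq:tau}. The one methodological difference is in the last step: the paper avoids computing the traces by hand, instead invoking Meyberg's identity $\tr(D_J^+(x,a)) = g_J\, m_J(x,a)$ from \cite[(2.20)]{M2} and reading off the genus $g_J$ and generic trace $m_J$ from \cite[\S 17.2]{L}. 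Computing the traces directly, as you propose, is perfectly legitimate in principle.

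The problem is that the trace formulas you assert are wrong in three of the four cases, so the row-by-row verification of Column 4 would not close. For $\text{IV}_n$ the computation is completely elementary: $D_J(x,a)y = q(x,a)y + q(y,a)x - q(x,y)a$, and the three summands contribute $n\,q(x,a)$, $+q(x,a)$ and $-q(x,a)$ respectively, so $\tr(D_J^\sg(x,a)) = n\, q(x,a)$, not $2\,q(x,a)$. With your value the $\type{B}{n}$ row would give $\tau^\sg(x,a) = \tfrac{2}{2n-3}q(x,a)$ instead of the tabulated $q(x,a)$. Similarly, your formula for $\text{II}_n$ contradicts the paper's own worked $\type{E}{6}$ computation, where $\tr(D^\sg(x,a)) = 4\tr(xa)$ for $\text{II}_5$ (genus $8$ times generic trace $\tfrac12\tr(xa)$), whereas $2(n-1)\tr(a^tx) = -8\tr(xa)$ for alternating matrices; and your coefficient $6$ for $\text{V}$ is half of the genus $12$ of that pair. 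Only the $\text{I}_{p,q}$ formula $(p+q)\tr(a^tx)$ is correct. Since the corollary is precisely the assertion that the table's fourth column is what formula \eqref{eq:tau} produces, these errors mean the proposed verification fails for five of the seven rows; you would either need to redo the direct trace computations correctly or, as the paper does, route them through the genus and generic trace.
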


\begin{table}
\renewcommand{\arraystretch}{1.3}
\begin{tabular}[ht]
{|c   c | c| c | c | c | c |}

\hline
\small
$P= \fP(\Pi;S,T)$
&$n$
& $\bP \simgr \fP(\Pi;\sigma_{\Pi'}(S'),T)$
& $J$
& $\tau^\sigma(x,a)$
 \\ 
\whline

\clearlabels \Satrue \Sdtrue \Tatrue\An
&$\ge 2$
& \clearlabels \Sbtrue \Tatrue\An
& $\text{I}_{1,n-1}$
& $ x a^t$
\\
\hline


\clearlabels \Sbtrue  \Tatrue\Bn
& $\ge 3$
& \clearlabels \Sbtrue  \Tatrue\Bn
& $\text{IV}_{2n-3}$
& $ q(x,a)$
\\
\hline

\clearlabels \Satrue  \Tetrue\Cn
& $\ge 2$
& \clearlabels \Sdtrue  \Tetrue\Cn
& $\text{I}_{1,n-1}$
& $ 2 x a^t$
\\
\hline

\clearlabels \Sbtrue  \Tatrue\Dn
& $\ge 4$
& \clearlabels \Sbtrue  \Tatrue\Dn
& $\text{IV}_{2n-4}$
& $ q(x,a)$
\\
\hline

\clearlabels \Sbtrue  \Tetrue\Dn
&$\ge 5$
& \clearlabels \Sdtrue  \Tetrue\Dn
& $\text{I}_{2,n-2}$
& $ \tr(x a^t)$
\\
\hline

\clearlabels \Sftrue  \Tatrue\Esixtable
&$6$
& \clearlabels \Sbtrue  \Tatrue\Esixtable
& $\text{II}_5$
& $ \frac 1 2 \tr(x a)$
\\
\hline

\clearlabels \Satrue  \Tftrue \Eseven
&$7$
& \clearlabels \Setrue \Tftrue \Eseven
& $\text{V}$
& $ \text{t}_{\cC}(x \bar a^t)$
\\
\hline
\end{tabular}
\medskip
\caption{Reflections of simple close-to-Jordan pairs (with a case excluded)}
\label{tab:TJtau}
\end{table}

\begin{proof} Constructing Column 1 is an easy exercise considering types case-by-case using the discussion at the beginning of this  subsection.

To  complete row $i$, where $1\le i \le 7$, we chose $S$ and $\Pi$ as listed in Column 1.
To get the entry in Column 2,  we just calculate $\sg_{\Pi'}(S')$. To get the entries in
Columns 3 and 4, we do the following:
\begin{itemize}
\item[(a)]  Find a Jordan pair $J$ of matrices that is isomorphic to
$\fP(\Pi';S')$; and then
\item[(b)] Calculate $\tr(D_J^\sg(x,a))$ for
$x\in J^\sg$, $a\in J^\msg$ (which allows us to calculate $\tau^\sg$ using \eqref{eq:tau}).
\end{itemize}
Fortunately, (a) and (b) can be accomplished using well known
facts from the theory of Jordan pairs.  Indeed for (a),  Loos and Neher have written down a table
which lists the finite dimensional simple Jordan pairs (as pairs of matrices) together with their representing marked Dynkin diagrams.  (See \cite[\S 19.9]{LN}, which refers to \cite{N1} and \cite{N2} for the necessary arguments.)
For (b), Meyberg observes in \cite[(2.20)]{M2} (using Theorem 17.3 in \cite{L}) that
\begin{equation} \label{eq:Mey}
\tr(D_J^+(x,a)) = g_J \, m_J(x,a)
\end{equation}
for $x\in J^+$, $a\in J^-$, where $g_J$ is the genus of $J$ and $m_J : J^+ \times J^- \to \bbK$ is the generic trace of $J$.

We carry out  steps
(a) and (b) in detail for the second last row of the table, leaving the other rows to the reader.  In that row the marked diagram representing $\fP(\Pi';S')$ is isomorphic to
$\clearlabels \Sdtrue  \Dfive$.  \hbox{ } So from the table in \cite[\S 19.9]{LN}, we see that we may take
$J = \text{II}_5 = (\text{A}_5(\bbK),\text{A}_5(\bbK))$.
Then from  \cite[\S 17.2]{L}  we know that
$g_J = 8$ and $m_J(x,a) = \frac 12  tr(xa)$, which using \eqref{eq:Mey} gives us the equality $\tr (D^+(x,a))  = 4\tr(xa)$.  So since $J = J^\op$, we have
$\tr (D^\sg(x,a))  = 4\tr(xa)$ for $\sg = \pm$.  Thus, since
$\dim(J^\sg)= 10$, we have by  \eqref{eq:tau} that
$\tau^\sg(x,a) =  \frac 25\theta_{\Pi,T} \tr (xa)$.
Now, labeling the roots in $\Pi$ as in  Example \ref{ex:E6KP}, we have
$t = 1$, $t' = 2$ and $s = 6$.  Also, the
$(1,2)$-entry of $C(\Pi)$ is $-1$, and
the  $(2,6)$-entry of $C(\Pi')^{-1}$ is
$\frac 3 4$ \cite[Table 1,\S13.2]{H}, so $\theta_{\Pi,T} =-\frac 34 +2  = \frac 54$. Hence $\tau^\sg(x,a)  = \frac 1{2} \tr(xa)$.
\end{proof}

Note that if we ignore the grading, the ungraded Kantor pair $\bP$ in the second last row of Table
\ref{tab:TJtau} is the pair labelled $\type{E}{6}(20,5)$ in Example \ref{ex:E6KP}.  Our construction  of $\type{E}{6}(20,5)$
in the form $\fT(P,\tau)$
is a basis-free pair version, with full proofs, of the construction given by
Kantor in \cite[\S 6.6]{K1}  (see also \cite[\S 4]{K2}) of the Kantor triple system $C_{55}^2$.  More precisely,
$\fT(J,\tau)$ shown in the table is the double of  $C_{55}^2$.
The pair $\type{E}{6}(20,5)$   is of particular interest
since it is one of only two finite dimensional simple Kantor pairs of exceptional type that does not arise by doubling
a structurable algebra.  (See  \cite[\S 7.9]{AFS}, where another construction
of  $\type{E}{6}(20,5)$ is given as the reflection of an SP-graded Kantor pair that is constructed using exterior algebras.)

\subsection{The excluded case}  \label{subsec:excluded}  In this final subsection, we consider, without proofs, the case that was excluded in Theorem \ref{thm:closecon} and Corollary \ref{cor:tableBP}.   We make the  assumptions and use the notation of the first three paragraphs of Subsection \ref{subsec:closeconstruct}.

To treat the excluded case, we assume that
$\Pi=\set{\mu_1,\dots,\mu_n}$ is of type $\type{A}{n}$ with roots labelled in order on the diagram from left to right, $n\ge 3$,
$S = \set{\mu_1,\mu_n}$ and $T = \set{\mu_t}$ with $1 < t < n$.
Let $P = \fP(\Pi;S,T)$, in which case the marked Dynkin diagrams representing $P$ and $\bP \simgr \fP(\Pi;\sigma_{T'}(S'),T)$ are respectively:
\begin{equation*} \clearlabels \Satrue \Setrue \Tctrue \Anmidout \qquad \andd \qquad \clearlabels \Sbtrue \Sdtrue \Tctrue \Anmidin .\end{equation*}

Note that $\Pi'$ is not connected (which is the reason we are treating this case separately).  In fact, the connected components of $\Pi'$ are
$\Pi_1' = \set{\mu_1,\dots,\mu_{t-1}}$  and $\Pi_2' = \set{\mu_{t+1},\dots,\mu_{n}}$.  One sees as in Theorem \ref{thm:closecon}(i) that
$\fP(\Pi_1';\set{\mu_1})$ and
$\fP(\Pi_2';\set{\mu_n})$ are Jordan pairs with
\[P_0 = \fP(\Pi_1';\set{\mu_1}) \oplus  \fP(\Pi_2';\set{\mu_n}).\]

Next let $J = J_1 \oplus J_2$, where $J_1 \simeq \fP(\Pi_1';\set{\mu_1})$ and $J_2 \simeq  \fP(\Pi_2';\set{\mu_n})$ are simple Jordan pairs.  We obtain as in
Theorem \ref{thm:closecon}(ii) that $\bP \simgr \fT(J,\tau)$, where
\begin{equation}\label{eq:excludedtau} \textstyle \tau^\sg(x_1+x_2,a_1+a_2) = \frac 1 t\tr(D_{J_1}^\sg(x_1,a_1)) +
\frac 1 {n-t+1} \tr(D_{J_2}^\sg(x_2,a_2)) \end{equation}
for $x_i\in J_i^\sg$, $a_i\in J_i^\msg$.

Finally, we may choose
\begin{equation} \label{eq:Jexcluded}
J = J_1 \oplus J_2,\quad \text{ where } J_1 = \text{I}_{1,t-1} \text{ and } J_2 = \text{I}_{1,n-t}\ ;
\end{equation}
 and
we see as in Corollary \ref{cor:tableBP} that
\begin{equation*} \label{eq:excludemat} \bP \simgr \fT(J,\tau) \quad \text{with}\quad \tau^\sg(x_1+x_2,a_1+a_2) = x_1a_1^t + x_2 a_2^t \end{equation*}
(In this last equation, the superscript $t$ denotes the transpose map.)

\begin{remark} \label{rem:proofexcluded} The proof of the above facts is obtained by modifying the arguments
in Subsection \ref{subsec:closeconstruct}.  The main difference is in the proof of
\eqref{eq:excludedtau}, where we consider two different spaces of homomorphisms $\cF_1$ and
$\cF_2$ obtained from  simple $3$-graded Lie algebras which envelop
the Jordan pairs $\fP(\Pi_1';\set{\mu_1})$ and
$\fP(\Pi_2';\set{\mu_n})$ respectively (just as $\cF$ is obtained from
the simple $3$-graded Lie algebra $\cG'(\chi_{S'})$ in the proof of \eqref{eq:tau}).  We leave the details to the reader.
\end{remark}

\begin{remark}  \label{rem:moreTJtau} The reader may have noticed that the  Jordan
pairs $J$ that appear  in either Column 3 of Table
\ref{tab:TJtau} or in \eqref{eq:Jexcluded} consist of all finite dimensional semi-simple Jordan pairs of
rank $1$ or $2$ \cite[\S 17]{L}; and that $\tau$ is almost uniquely determined by~$J$.  The intrinsic reason for these mysterious facts will be explained in \cite{AF2}, where we will use Jordan techniques to study the construction $\fT(J,\tau)$ over an arbitrary field of
characteristic $\ne 2$ or  $3$.
\end{remark}

\end{document}